\documentclass[reqno, 11pt]{article}

\usepackage[utf8]{inputenc}	
\usepackage[T1]{fontenc}
\usepackage[english]{babel} 
\usepackage{amsmath}
\usepackage{comment,bbm,ulem}
\usepackage{mathtools}
\usepackage{mathrsfs}
\usepackage{amssymb,amsfonts}
\usepackage{amsthm}
\usepackage{array}
\usepackage{xcolor}
\usepackage{multirow}
\usepackage{todonotes}
\usepackage{bm}
\usepackage{lineno}
\usepackage[pdftex,
colorlinks=true, 
linkcolor=blue, 
citecolor=blue, 
pagebackref=false, 
]{hyperref}

\usepackage{transparent}

\usepackage{enumitem}

\usepackage{graphicx}
\usepackage{subcaption}

\usepackage{dsfont}

\usepackage{hyperref}
\usepackage{vmargin}
\setmarginsrb{3.5cm}{2cm}{3.5cm}{2cm}{0cm}{1cm}{0cm}{1.5cm}

\allowdisplaybreaks[3]

 {\everymath{\displaystyle\everymath{}}\array}%
 {\endarray}

\theoremstyle{plain}
\newtheorem{theorem}{Theorem}
\newtheorem{proposition}[theorem]{Proposition}
\newtheorem{lemma}[theorem]{Lemma}

\newtheorem{definition}[theorem]{Definition}

\theoremstyle{remark}
\newtheorem{example}[theorem]{Example}
\newtheorem{remark}[theorem]{Remark}

\newenvironment{frcseries}{\fontfamily{frc}\selectfont}{}

\def\<{{\langle}}
\def\>{{\rangle}}

\def\1Lip{1\text{-Lip}}

\def\dd{\mathrm{d}}

\def\Tr{\mathrm{Tr}}

\DeclareMathOperator*{\argmax}{arg\,max}
\DeclareMathOperator*{\argmin}{arg\,min}

\DeclareMathOperator*{\esssup}{ess\,sup}

\title{Robust mean-field games under entropy-based uncertainty}

\author{
François Delarue 
\thanks{
Université Côte d'Azur, CNRS, Laboratoire J.A. Dieudonné,   
06108 Nice, France.\\
Email: \texttt{francois.delarue@univ-cotedazur.fr}
}
\and
Pierre Lavigne
\thanks{
Université Côte d'Azur, CNRS, Laboratoire J.A. Dieudonné,   
06108 Nice, France.\\
Email: \texttt{pierre.lavigne@univ-cotedazur.fr}
}
}

\date{dedicated to Alain Bensoussan}

\begin{document}
\maketitle
\begin{abstract}
In this article, we introduce a new class of entropy-penalized robust mean-field game problems in which the representative agent is opposed to Nature. The agent’s objective is formulated as a min--max stochastic control problem, in which Nature distorts the reference probability measure at an entropic cost. As a consequence, the distribution of the continuum of agents represented by the player is given by the effective measure induced by Nature. Existence of a mean-field game equilibrium is established via a Schauder fixed point argument. To ensure uniqueness, we introduce a joint flat anti-monotonicity and displacement monotonicity condition, extending the classical Lasry--Lions monotonicity framework. Finally, we present two classes of $N$-player games for which the mean-field game limit yields $\varepsilon$-Nash equilibria.
\end{abstract}

Keywords: Risk-averse mean-field games, Quadratic backward stochastic differential equations, Monotonicity on the space of probability measures, $\varepsilon$-Nash equilibria, Entropic penalties
\vskip 4pt

MSC2020. Primary: 49N80, 91A16; Secondary: 60H10


\section{Introduction}

In this article, we introduce a class of robust (or risk-averse) mean-field game problems in which, for a given mean-field configuration, a representative player optimizes against an adversarial agent, referred to as Nature, who acts on the probability distribution by emphasizing worst-case scenarios. For a fixed control of the representative player, the mean-field configuration is defined as the marginal law of the controlled state under the probability measure induced by Nature.

\paragraph{Formulation of the problem.}
Let $[0,T]$ be a finite time horizon and let $(\Omega,\mathcal{F},\mathbb{P})$ be a probability space supporting a $d$-dimensional Brownian motion $W=(W_t)_{0\le t\le T}$ and an independent $\mathbb{R}^n$-valued random variable $\eta$, representing the initial state of the representative player. Here, $n\in\mathbb{N}^\star$ denotes the state dimension and $d\in\mathbb{N}^\star$ the dimension of the driving noise. The $\mathbb{P}$-complete filtration generated by $(\eta,W)$ is denoted by $\mathbb{F}=(\mathcal{F}_t)_{0\le t\le T}$.

Given a mean field coupling $\mu$, viewed as an element of the space $\mathcal{M}(\mathbb{R}^d)$ of non-negative (possibly non-normalized) measures on $\mathbb{R}^d$, the representative agent seeks to minimize a robust (risk-averse) objective functional of min--max type:
\begin{equation} 
\tag{MinMax$[\mu]$}\label{pb:optim-mfg}\begin{split}
    &\inf_{\psi \in \mathcal{A}} \sup_{q \in \mathcal{Q}} \mathcal{J}[\mu](\psi,q), 
    \\
    \textrm{\rm where} \quad &\mathcal{J}[\mu](\psi,q)
    \coloneqq
    \mathbb{E}\!\left[q_T g(X_T^\psi,\mu) + \int_0^T q_s \ell(s,\psi_s)\,\mathrm{d}s \right]
    - \mathcal{S}(q).
    \end{split}
\end{equation}
Nature optimizes over a flow $q=(q_t)_{t\in[0,T]}$ of random non-normalized densities, while the representative player optimizes over a control $\psi$, with associated state process $X^\psi=(X_t^\psi)_{t\in[0,T]}$. The functional $\mathcal{S}(q)$, defined precisely below, is referred to as a generalized entropy, as it extends the classical relative entropy with respect to $\mathbb{P}$. The functions $\ell:\Omega\times[0,T]\times\mathbb{R}^n\to\mathbb{R}$ and $g:\mathbb{R}^n\times\mathcal{M}(\mathbb{R}^n)\to\mathbb{R}$ denote respectively the running cost and the mean-field terminal cost of the representative player. In this setting, min--max equilibria are sought within the class of open-loop controls.
The mean field consistency condition requires finding a measure $\mu$ such that, if $(q,\psi)$ is a saddle point of \eqref{pb:optim-mfg} (whose existence and uniqueness are ensured under the assumptions stated below), then $\mu$ coincides with the law of $X_T^\psi$ under the effective measure $q_T\mathbb{P}$ selected by Nature.

Admissible processes $q$ are assumed to admit the representation
\begin{equation}
\label{eq:q:explicit:factorization}
    q_t = e^{\int_0^t Y^\star_s\dd s } \mathcal{E}_t\biggl(\int_0^\cdot Z^\star_s \cdot \dd W_s\biggr),
   \quad t \in [0,T],
\end{equation}
  where 
  $({\mathcal E}_t(M) = \exp(M_t - \tfrac12 \langle M
\rangle_t))_{t \in [0,T]}$ denotes the Doléans-Dade exponential of
a local martingale $(M_t)_{t \in [0,T]}$ with 
$(\langle M\rangle_t)_{t \in [0,T]}$ as bracket,     
  $Y^\star=(Y_t^\star)_{t \in [0,T]}$
    and $Z^\star=(Z_t^\star)_{t \in [0,T]}$
    are two ${\mathbb F}$-progressively measurable processes
    with 
    values in ${\mathbb R}$ and ${\mathbb R}^d$, 
    respectively, such that, ${\mathbb P}$-almost surely, 
    $\int_0^T \vert Y_s^\star \vert \dd s$ and 
    $\int_0^T \vert Z_s^\star \vert^2 \dd s$ are finite. 
    In particular, $(q_t)_{t \in [0,T]}$ satisfies the equation
         \begin{align}
  q_t = 1 + \int_0^t q_s Y_s^\star \dd s + \int_0^t q_s Z_s^\star \cdot \dd W_s,
\quad t \in [0,T].  \label{eq:q} 
  \end{align}   
    The generalized entropy of $q$ is defined as 
    \begin{equation}
    \label{eq:S}
    \mathcal{S}(q) \coloneqq \mathbb{E}\left[ \int_0^T q_s  f^{\star}(s,Y^\star_s,Z^\star_s) \dd s  \right]
\end{equation}
and is required to be finite. The function $f^\star \colon \Omega \times [0,T] \times \mathbb{R} \times \mathbb{R}^d \to \mathbb{R}$ 
is the ‘convex dual' driver (as it it the
Legendre-Fenchel transform of some $f$ introduced in the assumptions below). 
    When 
    $f^\star(s,y^\star,z^\star) = \tfrac12 \vert z^\star \vert^2$
    and $Y^\star \equiv 0$ in \eqref{eq:q:explicit:factorization},
    ${\mathcal S}(q)$ coincides with the standard relative entropy 
${\mathbb E}[q_T \ln(q_T)]$. 
The set of admissible processes $q$ is denoted $\mathcal{Q}$.

The admissible control set $\mathcal{A}$ consists of all $\mathbb{F}$-progressively measurable, $\mathbb{R}^n$-valued processes $\psi=(\psi_t)_{t\in[0,T]}$ satisfying
\begin{equation}
\label{eq:expo:bound:psi}
\mathcal{S}^\star(\psi) < +\infty,
\qquad
\mathcal{S}^\star(\psi)
\coloneqq
\sup_{q\in\mathcal{Q}}
\left\{
\mathbb{E}\!\left[\int_0^T q_s |\psi_s|^2\,\mathrm{d}s\right]
-
\gamma\,\mathcal{S}(q)
\right\},
\end{equation}
where the constant $\gamma>0$ is specified below in accordance with the assumptions on the model coefficients.
Although this definition may appear technical at first glance, it in fact captures the duality between the state of Nature and that of the representative player. In particular, when $\mathcal{S}$ coincides with the relative entropy with respect to $\mathbb{P}$, condition \eqref{eq:expo:bound:psi} echoes the Donsker--Varadhan duality formula and effectively enforces the existence of an exponential moment for $\int_0^T |\psi_s|^2\,\mathrm{d}s$. The role played by $\mathcal{S}^\star$ in the analysis of problem \eqref{pb:optim-mfg} has been highlighted in our parallel work \cite{DelarueLavigne}, where the present framework is adapted to robust mean field control.
For a given control $\psi \in \mathcal{A}$, the state $X^\psi=(X_t^\psi)_{t \in [0,T]}$ of the representative player is the solution to
\begin{equation}
\label{eq:intro:X}
    \dd X_t = b(t,X_t,\psi_t) \dd t + \sigma(t,\psi_t) \dd W_t, \quad X_0 = \eta,
\end{equation}
where the drift $b \colon \Omega \times [0,T] \times \mathbb{R}^n \times \mathbb{R}^n \to \mathbb{R}^n$ and the volatility $\sigma \colon \Omega \times [0,T] \times \mathbb{R}^n \to \mathbb{R}^{n \times d}$ are possibly random.
Implicitly, the coefficients $b$ and $\sigma$ are assumed to be $\mathbb{F}$-progressively measurable. Their precise regularity and structural assumptions are specified later in the paper. In particular, although the state equation \eqref{eq:intro:X} will ultimately be taken to be linear, we keep its general form here for expositional purposes.

We now provide a more detailed description of the equilibrium condition.
Let $q$ and $\psi$ denote the optimal 
state of Nature and the optimal control of the representative agent
respectively 
(see 
Theorem 
\ref{theorem:SMP}
for the existence and uniqueness  of 
such a saddle point). 
In the mean-field framework, the representative agent is assumed to be typical of a continuum of statistically identical and independent agents, all playing the same game. ln particular, the mean-field equilibrium condition requires that the coupling measure $\mu$ coincides with the law of the terminal state $X_T^\psi$ but under the \textit{effective} measure 
 induced by Nature. 
Formally,
the latter writes $q {\mathbb P} : {\mathcal F} \ni A \mapsto {\mathbb E}[ q {\mathds 1}_A]$, and the 
law of $X_T^\psi$ under $q {\mathbb P}$ is 
$q {\mathbb P} \circ (X_T^\psi)^{-1}$ (which we also write 
$(q {\mathbb P})_{X_T^\psi}$). This leads to the fixed point condition:
\begin{equation} \tag{MFG-eq}\label{eq:equilibrium-condition}
    \mu = ( q {\mathbb P}) \circ (X^\psi_T)^{-1}.
\end{equation}
We refer to such a mean-field equilibrium as \textit{consistent}, meaning that the mean-field coupling observed by each agent at equilibrium is determined by the probability measure induced by Nature. In this approach, the agent’s risk sensitivity is fully reflected in the mean-field coupling.

Alternatively, one may consider the classical fixed point condition
\[
\mu = \mathbb{P} \circ (X_T^\psi)^{-1}.
\]
However, 
under this prescription, the equilibrium is \textit{inconsistent}: the representative agent remains risk-averse with respect to their own idiosyncratic noise, yet anticipates that the aggregate mean-field will materialize under the reference measure $\mathbb{P}$. This inconsistency arises from a mismatch between the agent's risk perception and the formation of the mean-field interaction.

In the rest of the article, 
the mean-field game problem thus consists in finding a triple $(q,\psi,\mu) \in \mathcal{Q} \times \mathcal{A} \times \mathcal{M}(\mathbb{R}^n)$ solving \eqref{pb:optim-mfg}-\eqref{eq:equilibrium-condition}, 
which may be summarized as
 \begin{equation} \tag{MFG} \label{pb:mfg}
     \mathcal{J}[\mu](\psi,q) = \inf_{\psi' \in \mathcal{A}} \sup_{q' \in \mathcal{Q}} \mathcal{J}[\mu](\psi',q'), \quad \mu = ( q {\mathbb P}) \circ (X^\psi_T)^{-1}.
\end{equation}

\paragraph{FBSDE formulation} In the existing literature, mean-field game equilibria are typically characterized by a system of forward-backward partial differential equations (PDEs)---specifically two  {Fokker-Planck} and  {Hamilton-Jacobi-Bellman} equations, see \cite{LL06cr1,LL06cr2}---or, from a probabilistic standpoint, a system of forward-backward stochastic differential equations (FBSDEs), see \cite{carmona2018probabilistic-v1}.
In this work, we adopt the probabilistic perspective. Under this formulation, the mean-field game FBSDE is viewed as the first-order system describing the optimal control problem of a representative agent interacting with a prescribed population distribution. The equilibrium is established via a fixed-point condition, which requires that this distribution coincides with the law induced by the agent's optimal strategy
under Nature's density process. 
Within our framework, the FBSDE 
associated with the representative agent is coupled with another FBSDE, describing Nature's optimal control. The resulting equilibrium must therefore account for the simultaneous optimization of the agent and Nature, coupled with the aggregate consistency stemming from the mean-field interaction.

Given an arbitrary   measure $\mu \in {\mathcal M}({\mathbb R}^n)$,
candidate for being a mean-field equilibrium, the two FBSDEs are driven by 
the following two pre-Hamiltonians, associated with the representative player and Nature
respectively:
\begin{equation}
    \label{eq:def:H:F}    
    \begin{split}
    H(t,x,\psi,p,k,q) & := q \ell(t,\psi) + p \cdot b(t,x,\psi)+ \Tr (k \sigma^{\top}(t,\psi) ), \\
F(t,q,y^\star,z^\star,y,z,\psi) & := q\left(yy^\star + z\cdot z^\star - f^\star(t,y^\star,z^\star) + \ell(t,\psi)\right).
\end{split}
\end{equation}
Given $q \in \mathcal{Q}$, we say that a tuple $(\psi,p,k,X)$ satisfies the first order condition \eqref{optim:condition-primal} for the representative player problem if $(\psi,p,k,X)$ is a solution to
\begin{equation} \tag{RP$[\mu]$}
    \left\{ \begin{array}{rll}
        - \dd p_t  & =   \nabla_x H(t,X_t,\psi_t,p_t,k_t,q_t) \dd t - k_t \dd W_t, & 
        p_T = q_T \nabla g(X_T^\psi, \mu), \\[0.5em]
        \dd X_t & = b(t,X_t,\psi_t) \dd t + \sigma(t,\psi_t) \dd W_t,  & X_0 = \eta,\\[0.5em]
        \psi_t & \in \argmin_{\alpha} H(t,X_t,\alpha,p_t,k_t,q_t), & \dd \mathbb{P}  \otimes \dd t \text{-a.e.}
    \end{array} \right.
\end{equation}
The first equation is interpreted as the adjoint equation for the representative player, the second equation as the state equation, and the last equation as the optimality condition. Because the last equation creates a coupling between the first two equations, the system above is an FBSDE.

Given $\psi \in \mathcal{A}$, we say that a tuple $(Y,Z,q)$ satisfies the first order condition \eqref{optim:condition-dual} for Nature problem if  $(Y,Z,q)$ is a solution to 
\begin{equation*}  \tag{N$[\mu]$}
    \left\{ \begin{array}{rll}
        - \dd Y_t & =  \partial_q F(t,q_t,Y^{\star}_t,Z^{\star}_t,Y_t,Z_t,\psi_t) \dd t - Z_t \cdot \dd W_t,  & Y_T = g( X_T^\psi, \mu), \\[0.5em]
         \dd q_t &  = q_t Y^\star_t  \dd t +  q_t Z^\star_t  \cdot \dd W_t, &
        q_0  = 1,\\[0.5em]
        (Y^\star_t,Z^\star_t) & \in \argmax_{(Y^{\star \prime},Z^{\star \prime})}F(t,q_t,Y^{\star \prime},Z^{\star \prime},Y_t,Z_t,\psi_t),  & \dd \mathbb{P}  \otimes \dd t\text{-a.e.}
    \end{array} \right.
\end{equation*}
The first equation is interpreted as the adjoint equation to Nature' state. 
The process 
$Y$ describes the time instantaneous value of the representative player. Indeed, when $\psi$ is optimal for the representative player, $Y$ can be seen as the solution to a (risk averse) dynamic programming principle for the representative player. When $(Y^\star,Z^\star) \equiv 0$ (and thus $q \equiv 1$),
and (say) $f^\star(s,0,0)=0$, we clearly recover the standard dynamic programming principle.  
The second equation describes the dynamics of the control variable, and the last equation is the optimality condition. This system of equations is also an FBSDE for the same reason as the previous system. 
The two FBSDEs are obviously coupled. 

The FBSDE characterizing the Nash equilibria is given by \eqref{optim:condition-primal}-\eqref{optim:condition-dual} complemented with the equilibrium condition \eqref{eq:equilibrium-condition}.

In the benchmark case where $f^\star(s, y^\star, z^\star) = \frac{1}{2} |z^\star|^2$ and $f(s, y, z) = \frac{1}{2} |z|^2$, the FBSDE for $Y$ becomes quadratic. This structure significantly complicates the solvability of the coupled systems, particularly when the terminal reward $g$ is an unbounded function of the state.
Under these conditions, the solvability of the quadratic BSDE satisfied by $Y$ appears to lie beyond the scope of standard results in the literature, such as those found in \cite{briand2006bsde, briand2008quadratic}. This difficulty prompted a dedicated investigation into the properties of this BSDE in our companion work \cite{DelarueLavigne}, where we provide a tailored analysis leveraging the specific min-max structure of problem \eqref{pb:optim-mfg}.
Broadly speaking, for a fixed measure $\mu$, we establish existence and uniqueness by exploiting the underlying concavity in the variable $q$ and convexity in the variable $\psi$. These structural properties are further utilized in our approach to resolving the fixed-point condition \eqref{eq:equilibrium-condition}.

A typical situation where these conditions arise is that of a financial investor seeking to maximize utility while being subject to trading costs. Assuming the market consists of $n$ assets, each evolving according to the dynamics
\begin{equation*}
    \frac{\dd S_t^i}{S_t^i} = c_t^i \dd t + \sigma_t^i \dd W_t,
\end{equation*}
where $W$ is a $d$-dimensional noise process and the (potentially random) coefficients $c^i$ and $\sigma^i$ are of appropriate dimensions, the investor's self-financing portfolio $X^\psi$ evolves according to the equation
\begin{equation*}
    \dd X_t^\psi = \sum_{i=1}^n \psi_t^i \frac{\dd S_t^i}{S_t^i}, \quad X_0 = 1,
\end{equation*}
where the initial condition is arbitrarily chosen to be unitary. The problem of the risk-averse investor, expressed in a min-max form, is given by
\begin{equation} \label{pb:inve-risk-averse}
    \sup_{q \in \mathcal{Q}} \inf_{\psi\in \mathcal{A}} \mathcal{J}(q,\psi),
\end{equation}
where 
\begin{equation*}
    \mathcal{J}(q,\psi) = \mathbb{E}^{\mathbb{Q}}\left[ g(X_T^\psi ) + \frac{1}{2} \int_0^T |\psi_s|^2 ds \right] - \gamma \mathcal{H}(\mathbb{Q} \vert \mathbb{P}), \quad \mathbb{Q} = q_T \mathbb{P},
\end{equation*}
with $q_T = \mathcal{E}_T(\int_0^\cdot Z_s^\star dW_s)$, and $g$ denotes a payoff function and 
${\mathcal H}({\mathbb Q}\vert {\mathbb P})$
the relative entropy of 
${\mathbb Q}$ with respect to 
${\mathbb P}$. 

Under this formulation, given a probability measure $\mathbb{Q}$ equivalent to $\mathbb{P}$, the investor optimizes the portfolio return while incurring a quadratic trading cost. Conversely, for a given investor strategy, Nature selects the worst-case probability measure $\mathbb{Q}$, subject to an entropic penalty. The parameter $\gamma > 0$ models the investor's level of risk aversion.
In a mean-field regime, the payoff may take the form $g(X_T^\psi - \lambda \bar{\mu})$, where $\lambda$ represents an interaction parameter and $\bar{\mu}$ denotes the mean of $\mu$, representing the average wealth under the effective probability measure.

\paragraph{Literature.}Mean-field games (MFGs) are competitive problems involving a continuum of agents whose interactions occur through a mean-field functional. They naturally arise as the limit of large, symmetric, and anonymous finite-player games, much like stochastic mean-field control problems. The theory was independently introduced in \cite{huang2007large} and \cite{LL06cr1,LL06cr2} and has since been extensively developed; see \cite{bertucci2019some,BHP-schauder,cardaliaguet2019master,cardaliaguet2015second,carmona2018probabilistic-v1,carmona-delarue-lacker}. MFGs have found numerous applications, including economics and finance \cite{achdou2023simple,cardaliaguet2018mean,feron2021price,mouzouni2019topic}, environmental studies \cite{kobeissi2024tragedy,lavigne2023decarbonization}, and electricity markets \cite{alasseur2020extended}. For a comprehensive exposition of the theory, see the monograph \cite{carmona2018probabilistic-v1}.

The classical theory of mean field games (MFGs) primarily considers risk-neutral agents minimizing expected costs. Extending this framework to account for risk aversion has been the focus of several lines of research, each introducing different ways to capture the agents' attitudes toward uncertainty. In risk-sensitive MFGs \cite{moon2016linear,saldi2020approximate,tembine2013risk}, agents optimize exponential or variance-sensitive criteria, which penalize high variability in costs (see also 
\cite{bensoussan2017risk} for the mean-field control analogue problem). More generally, risk-averse MFGs \cite{cheng2023risk,escribe2024mean,lavigne2020} incorporate abstract risk measures into the cost functional, allowing a wide class of preferences beyond variance-based penalties. Finally, robust or worst-case MFGs, often inspired by $H^\infty$ control, introduce an explicit adversarial player (Nature) that acts to worsen the representative agent's outcome \cite{bauso2016robust,zaman2024robust}, capturing ambiguity and model uncertainty. These different approaches reflect complementary ways to model agent sensitivity to risk and uncertainty in large populations.

The robust mean-field game studied here is closely related to the risk-sensitive framework: by the Gibbs-variational (Donsker--Varadhan) principle, minimizing an exponential cost is equivalent to a min--max game in which Nature selects a worst-case measure penalized by relative entropy. That said, unlike the standard risk-sensitive setting, we model Nature explicitly as acting on the weighting of events, which allows the equilibrium condition \eqref{eq:equilibrium-condition} to be defined under the effective measure. To our knowledge, this explicit incorporation of the effective measure into the equilibrium definition is novel and provides a new perspective on robust mean-field equilibria.

\paragraph{Contributions.}Beyond the model itself, which we find interesting, we contribute the following results. First, based on Schauder’s theorem, we establish a general existence result for equilibria (see Theorem \ref{thm:existence}). Compared to standard MFGs, the proof requires careful treatment of Nature’s state. Continuity with respect to the state of Nature is obtained by combining entropy-type inequalities, established under ad-hoc convexity assumptions in \cite{DelarueLavigne}, with Pinsker’s inequality, which ultimately controls the total variation of Nature’s state.
Next, we identify a general uniqueness criterion (see Proposition \ref{prop:uniqueness}), which can be seen as an analogue of the Lasry--Lions monotonicity conditions (or displacement monotonicity in certain cases) in the risk-neutral setting. When the game is derived from a potential, this criterion reduces to a joint condition of flat concavity and displacement convexity for the potential. We also provide examples of non-potential games where the condition holds.
Finally, we discuss the connection with finite-player models. This question is subtle, since the law of large numbers underlying the derivation of the mean-field model is perturbed by Nature’s behavior. We present two approaches to show how the mean-field regime can emerge asymptotically and quantify to what extent the asymptotic equilibrium induces approximate equilibria (Lemmas \ref{lem:23}, \ref{lem:27}, and \ref{lem:31}). As in the rich literature on convergence in MFG theory, the passage from finite-player games to the continuum remains a challenging problem and certainly calls for further study.

\paragraph{Organization of the article.}
The article is organized as follows. In Section \ref{sec:notations}, we useful introduce notations and definitions. In Section \ref{sec:robust}, we present the stochastic maximum principle recently obtained in \cite{DelarueLavigne}, which allows us to handle \eqref{pb:optim-mfg} when $\mu$ is fixed. Section \ref{se:mfg} addresses the solvability of the mean-field game, providing both existence and uniqueness results. Finally, Section \ref{se:limiting} is dedicated to the passage from two forms of finite-player game to the mean-field limit.

Several results in the text are directly taken from \cite{DelarueLavigne}; nevertheless, we have written the exposition to maintain a smooth and coherent flow.

\section{Notations and definitions} \label{sec:notations}
This section introduces the notation used throughout the paper.

\paragraph{Spaces of random variables and processes.}
We work on the same filtered, complete probability space $(\Omega, \mathcal{F}, \mathbb{F}, \mathbb{P})$ as in the introduction. When another probability measure is used, this will be indicated explicitly in the notation of the corresponding spaces of random variables or processes; for instance, we write $L^p(\cdot, \mathbb{Q})$, which is the first space defined in the list below.
\vskip 4pt
For each $t \in [0,T]$,
we denote by $L^0(\mathcal{F}_t,\mathbb{R}^d)$ the set of $\mathbb{R}^d$ valued and $\mathcal{F}_t$-measurable random variables 
(r.v.'s in short). And then, for each $p>0$, we define the sets

\begin{itemize}
\item $L^p(\mathcal{F}_t,\mathbb{R}^d)$  of r.v.'s 
$X \in L^0(\mathcal{F}_t,\mathbb{R}^d)$ s.t. $ \| X \|_{L^p(\mathcal{F}_t,\mathbb{R}^d)} \coloneqq \mathbb E[|X|^p]<+\infty;$
\item $L^\infty(\mathcal{F}_t,\mathbb{R}^d)$  of r.v.'s $X \in L^0(\mathcal{F}_t,\mathbb{R}^d)$
s.t. $\| X \|_{L^\infty(\mathcal{F}_t,\mathbb{R}^d)} \coloneqq 
\underset{\omega \in \Omega}{\esssup} \underset{i \in \{1,\ldots,d\}}{\sup} |X^i(\omega)|$ $ < + \infty.$
\end{itemize}

We denote by $L^0(\mathbb F,\mathbb{R}^d)$ the space of $\mathbb F$-progressively measurable random processes (r.p.'s in short) with values in $\mathbb{R}^d$, and by $S^0({\mathbb F},{\mathbb R}^d)$ the subset of $L^0({\mathbb F},{\mathbb R}^d)$ comprising processes with continuous trajectories. 
Given $p>0$, we define the sets
\begin{itemize}
    \item $L^{p}(\mathbb F,\mathbb{R}^d)$ of r.p.'s $X \in L^0(\mathbb F,\mathbb{R}^d)$
    s.t. $\| X \|_{L^{p}(\mathbb F,\mathbb{R}^d)} \coloneqq \mathbb E\left[ \left(\displaystyle \int_0^T |X_t|^p \dd t \right)^{1/p} \right]<+\infty,$
    \item $M^{p}(\mathbb F,\mathbb{R}^d)$ of r.p.'s $X \in L^0(\mathbb F,\mathbb{R}^d)$
    s.t. $\| X \|_{M^{p}(\mathbb F,\mathbb{R}^d)} \coloneqq \mathbb E\left[ \left(\displaystyle \int_0^T |X_t|^2 \dd t \right)^{p/2} \right]<+\infty,$
    \item $L^\infty(\mathbb F,\mathbb{R}^d)$  of 
    r.p.'s
    $X \in L^0(\mathbb F,\mathbb{R}^d)$ s.t.  $\| X \|_{L^\infty(\mathbb F,\mathbb{R}^d)} \coloneqq \underset{t \in [0,T]}{\sup} \| X_t\|_{L^\infty(\mathcal{F}_t,\mathbb{R}^d)} < + \infty$,
     \item $S^p(\mathbb F,\mathbb{R}^d)$ of r.p.'s $X \in S^0(\mathbb F,\mathbb{R}^d)$
    s.t. $\|X\|_{S^p(\mathbb F,\mathbb{R}^d)}  \coloneqq  \mathbb E\left[
    \underset{t \in [0,T]}{\sup} |X_t|^p \right]<+\infty$,
    \item $D({\mathbb F},{\mathbb R}^d)$ of r.p.'s $X \in S^0({\mathbb F},{\mathbb R}^d)$ such that the family 
    $(\vert X_{\tau}\vert)_{\tau}$, with 
    $\tau$ running over the set of $[0,T]$-valued ${\mathbb F}$-stopping times, is uniformly integrable.
\end{itemize}

For more 
details on the  class 
$D({\mathbb F},{\mathbb R}^d)$, we refer to \cite[Definition 20]{dellacherie:meyer:b}.
For each space defined above, we omit the notation $\mathbb{R}^d$ when $d = 1$.

\paragraph{Spaces of measures.} We call 
$\mathcal{P}(\mathbb{R}^n)$ the set of probability measures on ${\mathbb R}^n$, and $\mathcal{M}(\mathbb{R}^n)$ the set of finite non-negative measures on $\mathbb{R}^n$. For any $p \geq 1$,  we define the sets
\begin{itemize}
    \item $\mathcal{P}_p(\mathbb{R}^n)$ of $\mu \in \mathcal{P}(\mathbb{R}^n)$ s.t. $\int_{{\mathbb R}^n} \vert x \vert^p  \dd \mu(x) < +\infty$, 
    \item $\mathcal{M}_p(\mathbb{R}^n)$ of $\mu \in \mathcal{M}(\mathbb{R}^n)$ s.t. $M_p(\mu):=\int_{\mathbb{R}^d} \vert x \vert^p  \dd \mu(x) < +\infty$.
\end{itemize}

For any finite measure $\mathbb{Q}$ on $\Omega$ and any measurable mapping $X \colon \Omega \to {\mathbb R}^n$, we denote by $\mathbb{Q}_X = \mathbb{Q} \circ X^{-1}$ the image measure of $\mathbb{Q}$ under $X$.
  And, for any non-negative measurable function $f$ on $\Omega$, not necessarily normalized, we denote by $f\mathbb{P}$ the associated (possibly non-normalized) measure $\mathbb{Q}$, defined by
\[
\mathbb{Q}(A) = \int_A f \, \mathrm{d}\mathbb{P}, \qquad A \in \mathcal{F}.
\]

\paragraph{Duality.} By definition of $\mathcal{S}$ and $\mathcal{S}^\star$ in 
\eqref{eq:S}
and \eqref{eq:expo:bound:psi}, 
we have for any $\mathbb{F}$-progressively measurable processes $q$ and $\zeta$, valued in $\mathbb{R}$,  
\begin{equation} \label{ineq:S-S-star}
    \mathcal{S}(q) + \mathcal{S}^\star(\zeta) \geq \frac{1}{\gamma} \mathbb{E}\left[\int_0^T q_s |\zeta_s|^2 \dd s \right],
\end{equation}
where $\mathcal{S}(q)$ and $\mathcal{S}^\star(\zeta)$ might take infinite values. Equality holds whenever 
\begin{equation*}
    q \in \argmax_{q' \in \mathcal{Q}} \left\{\mathbb{E}\left[ \int_0^T q_s |\zeta_s|^2 \dd s \right] - \gamma \mathcal{S}(q)\right\}.
\end{equation*}

\paragraph{Miscellaneous.} 
For finite-dimensional vectors $x$ and $y$ (in the same space), $x \cdot y$ denotes their scalar product. 
We also define the entropy function $\textrm{\rm Ent} \colon \mathbb{R}_+ \to \mathbb{R}$:
\begin{equation}
\label{eq:entropy:definition:h}
    \textrm{\rm Ent}(x) := x(\ln(x) - 1).
\end{equation}


\section{Robust control within a fixed environment} 
\label{sec:robust}

In this section, we address the problem \eqref{pb:optim-mfg}
 when $\mu \in {\mathcal M}_{2-r}({\mathbb R}^n)$ is fixed. 
 The set-up is clarified in  Subsection
\ref{subsec:set-up}. In Subsection 
\ref{subsec:solvability}, we present an  existence and uniqueness result to \eqref{pb:optim-mfg}, 
which directly follows from our companion work 
 \cite{DelarueLavigne}.
In Subsection 
\ref{subse:3:3}, we derive stability estimates, which are key 
in the analysis of the mean-field game carried out in the next section.

\subsection{Set-up}
\label{subsec:set-up}

The assumptions are mostly derived from the analysis introduced in 
\cite{DelarueLavigne}. 

We use repeatedly the notion of \textit{progressive-measurable} field.
For a metric space $({\mathcal X},d)$
and an integer $k \in {\mathbb N}^*$, a random field 
$\mathcal{F} : \Omega \times [0,T] \times {\mathcal X} \rightarrow {\mathbb R}^k$ is  progressively-measurable if, for any $t \in [0,T]$,
its restriction
to $\Omega \times [0,t] \times {\mathcal X}$
is ${\mathcal F}_t \otimes {\mathcal B}([0,t]) \otimes {\mathcal B}({\mathcal X})/{\mathcal B}({\mathbb R}^k)$
measurable.

Throughout, $L$ and $r$ are two constants, with $L>0$ and $r \in \{0,1\}$.
The assumptions hold true for any fixed 
$\mu \in {\mathcal M}_{2-r}({\mathbb R}^n)$, 
and the constants $L$ and $r$ are assumed to be independent of $\mu$. 
In fact, the only assumption in which $\mu$ appears is \ref{hyp:g:mu:fixed}. 
Therein, we pay special attention to 
introduce a tailored notation for the constants that genuinely depend on $\mu$.
    
\begin{enumerate}[label*=A\arabic*]
    \item \label{hyp:b} \textit{Initial condition and drift.}
The initial condition 
$\eta$ belongs to $L^\infty({\mathcal F}_0,{\mathbb R}^n)$, i.e. $\|\eta \|_{L^\infty(\mathcal{F}_0,\mathbb{R}^{n})}  <+\infty$, and the drift $b \colon \Omega \times [0,T] \times \mathbb{R}^n \times \mathbb{R}^n \to \mathbb{R}^n$ is linear, i.e., 
\begin{align*}
    b(t,x,\psi) = a_t + b_t x + c_t \psi,
\end{align*}
with $a$, $b$ and $c$ in 
$L^\infty({\mathbb F},{\mathbb R}^n)$,
$L^\infty({\mathbb F},{\mathbb R}^{n \times n})$
and
$L^\infty({\mathbb F},{\mathbb R}^{n \times n})$,  and 
 $\|a\|_{L^\infty(\mathbb{F},\mathbb{R}^{n})} + \|b\|_{L^\infty(\mathbb{F},\mathbb{R}^{n \times n})} +  \|c\|_{L^\infty(\mathbb{F},\mathbb{R}^{n \times n})}  \leq L$.
 
 \item \textit{Volatility.} \label{hyp:sigma} The volatility $\sigma \colon \Omega \times [0,T] \times \mathbb{R}^n \to \mathbb{R}^{n \times d}$ is linear, the $n \times d$ entries of the matrix $\sigma$ being of the form
\begin{align*}
    (\sigma(t,\psi))_{i,j} = (\nu_t)_{i,j} + r (\sigma_t)_{i,j,k} \psi_k,
\end{align*}
for $(i,j,k) \in \{1,\ldots,n\} \times \{1,\ldots,d\} \times  \{1,\ldots,n\}$. Above, 
$\nu$ and $\sigma$ are in $L^\infty(\mathbb{F},\mathbb{R}^{n\times d})$
and
$L^\infty(\mathbb{F},\mathbb{R}^{n\times d})$, 
and 
$\|\nu\|_{L^\infty(\mathbb{F},\mathbb{R}^{n\times d})} + \|\sigma\|_{L^\infty(\mathbb{F},\mathbb{R}^{n\times d \times n})} \leq L$.

\item \textit{Driver.} \label{eq:assumption1-f}  The driver $f \colon \Omega \times [0,T] \times \mathbb{R} \times \mathbb{R}^d \to \mathbb{R}$ is progressively-measurable, and
twice continuously differentiable and 
convex in its last two last arguments. 
There exist two positive constants $\alpha,\beta$ 
such that, almost surely in $\omega$ and almost everywhere in 
$t$,
\begin{equation*}
        f(t,y,z) \leq |f_t^0| + \alpha |y| + \frac{\beta}{2} |z|^2,
    \quad (y,z) \in {\mathbb R} \times {\mathbb R}^d,
\end{equation*}
where  $f^0 \coloneqq f(0,0) \in L^\infty(\mathbb{F})$. 
The second order derivatives in $y$ and $z$ are bounded by $L$. 

\item \textit{Dual driver.} \label{eq:assumption1-nabla-f-star} 
We call $f^\star \colon \Omega \times [0,T] \times \mathbb{R} \times \mathbb{R}^d  \to \mathbb{R}$ the Fenchel transform of the driver $f$ with respect to its variables $(y,z)$,
\begin{equation*}
\label{eq:f:star}
    f^\star (t,y^\star,z^\star) := \sup_{(y,z) \in \mathbb{R}\times \mathbb{R}^d} \left \{ \langle (y^\star,z^\star),(y,z) \rangle - f(t,y,z) \right\}. 
\end{equation*}
It is shown in \cite{DelarueLavigne} that $f^\star$
is progressively-measurable. 

\item \textit{Running cost.}  \label{hyp:L}
The cost $\ell \colon \Omega \times [0,T] \times \mathbb{R}^n \to \mathbb{R}$ is progressively-measurable and strongly convex, twice differentiable, and has a quadratic growth with respect to the control variable:
\begin{equation*} 
    \bigl(\nabla_\psi \ell(t,\psi) - \nabla_\psi \ell(t,\psi'\bigr))\cdot (\psi - \psi') \geq \frac{1}{L}|\psi - \psi'|^2, \quad |\nabla^2_{\psi} \ell(t,\psi) | \leq L,
\end{equation*}
and $\vert \ell(t,0) \vert \leq L$ for any $t\in[0,T]$ and $\psi,\psi' \in \mathbb{R}^n$.

\item \textit{Coefficients.} \label{hyp:gamma} We fix the coefficient $\gamma$ in 
\eqref{eq:expo:bound:psi}
to be given by
\begin{equation*}
\begin{split}
    \gamma &= 8 \beta \max(1, L) e^{\alpha T} \|\Gamma\|_{L^\infty({\mathbb F},{\mathbb R}^{n \times n})}
    \|\Gamma^{-1}\|_{L^\infty({\mathbb F},{\mathbb R}^{n \times n})}
    \\
    &\hspace{15pt} \times \left(\|\nu\|_{L^\infty({\mathbb F},{\mathbb R}^{n \times d})} + 12 \max(1,L) e^{\alpha T} \|\sigma\|_{L^\infty({\mathbb F},{\mathbb R}^{n \times d \times n})} \right).
    \end{split}
\end{equation*} 
When $r = 0$ we assume that the coefficients satisfy the condition 
\begin{equation*}
    4 \beta e^{\alpha T} L \|\Gamma \|^2_{L^\infty(\mathbb{F},\mathbb{R}^{n \times n})}
    \|\Gamma^{-1} \|^2_{L^\infty(\mathbb{F},\mathbb{R}^{n \times n})}
    \|\nu\|^2_{L^\infty(\mathbb{F},\mathbb{R}^{n \times d})} T < 1,
\end{equation*}
where $\Gamma$ is the solution to 
\begin{equation*}
        \frac{\dd}{\dd t} \Gamma_t =  b_t \Gamma_t, \quad 
        t \in [0,T],
        \quad 
        \Gamma_0 = I_n,
    \end{equation*}
    with $I_n$ standing for the $n \times n$ identity matrix. 

\item \textit{Terminal cost.} \label{hyp:g:mu:fixed}
    We  assume that $g : {\mathbb R}^n \times {\mathcal M}_{2-r}({\mathbb R}^n) \rightarrow {\mathbb R}$ is convex and twice differentiable in $x$ and, for any real  
    $C \geq 0$, there exists a constant $L_C \geq 0$ such that, for any 
    $\mu \in {\mathcal M}_{2-r}({\mathbb R}^n)$ with 
    $M_{2-r}(\mu) \leq C$, 
            \begin{equation} \label{hyp:G-growth-JC}
            \begin{array}{rl}
                    -L _C
        \left(1
        +  |x|\right) \leq g(x,\mu)  &\leq L_C \left(1 +    |x|^{2-r}\right), \\[.5em]
        \left |\nabla_x g(x,\mu) \right| & \leq L_C\left(1         +  |x|^{1-r} \right),\\[.5em]
           |\nabla^2_x g(x,\mu)| &\leq L_C.
            \end{array}
        \end{equation}
\end{enumerate}

\begin{remark}
The growth condition \ref{eq:assumption1-f}
implies that $f^\star(t,y^\star,z^\star) = + \infty$ if $\vert y^\star \vert > \alpha$. In particular, 
$Y^\star$ in \eqref{eq:q} is necessarily bounded by $\alpha$ if ${\mathcal S}(q)$ is finite (as it is required). Moreover, it is proven in
the first step of the proof of 
\cite[Proposition 20]{DelarueLavigne} that there exists a constant $C$, only depending on the parameters in the standing assumption such that, for 
$q \in {\mathbb Q}$, 
\begin{equation}
\label{eq:H:S}
{\mathbb E}[\textrm{\rm Ent}(q)] \leq C(1+ {\mathcal S}(q)), 
\end{equation}
where $\textrm{\rm Ent}$ is given by 
\eqref{eq:entropy:definition:h}.

In \ref{hyp:gamma}, the choice of $\gamma$ is dictated by the analysis carried out in 
\cite{DelarueLavigne}. 

In \ref{hyp:g:mu:fixed}, $g$ is assumed to be deterministic (contrary to the other coefficients). 
In fact, $g$ could be allowed to be random in some 
of the statements, but it is typically deterministic in the whole discussion on uniqueness 
and on the $N$-player approximation.

\end{remark}

\subsection{Solvability of the robust control problem}
\label{subsec:solvability}

Following \cite{DelarueLavigne}, we study the optimization problem \eqref{pb:optim-mfg} (for a fixed $\mu \in {\mathcal M}_{2-r}({\mathbb R}^n)$) via the associated Pontryagin system. Under the standing assumptions, the cost functionals are convex with respect to the state variable $X$ and concave with respect to the control variable $q$. As a consequence, the Pontryagin principle yields a full characterization of the saddle point. This constitutes one of the main results of \cite{DelarueLavigne}.

As already explained in  Introduction, the Pontryagin system takes the form of two   FBSDEs, each backward equation being driven by the derivative (with respect to the corresponding coordinate) of the corresponding pre-Hamiltonian introduced in \eqref{eq:def:H:F}.

Given $q \in \mathcal{Q}$, the first order condition \eqref{optim:condition-primal} for the representative player problem
writes in the form of a foward-backward system, 
with $(\psi,p,k,X)$ as unknown:
\begin{equation} \label{optim:condition-primal} \tag{RP$[\mu]$}
    \left\{ \begin{array}{rll}
        - \dd p_t  & =   \nabla_x H(t,X_t,\psi_t,p_t,k_t,q_t) \dd t - k_t \dd W_t 
        \\
        &= b_t^\top p_t \dd t - k_t \dd W_t,        
        & 
        p_T = q_T \nabla_x g(X_T^\psi,\mu), \\[0.8em]
        \dd X_t & = b(t,X_t,\psi_t) \dd t + \sigma(t,\psi_t) \dd W_t,  & X_0 = \eta,\\[0.8em]
        \psi_t & \in \argmin_{\alpha} H(t,X_t,\alpha,p_t,k_t,q_t), 
        \\
\textrm{\rm i.e.} \quad         
        0 & = q_t \nabla_{\psi} \ell(t,\psi_t) +   p_t \cdot c_t + r \Tr (\sigma_t^\top k_t),
        & \dd \mathbb{P}  \otimes \dd t \text{-a.s.},
    \end{array} \right.
\end{equation}
where we denote by convention
\begin{equation} \label{def:trace-sigma-k}
    {\rm Tr}\left( \sigma_t^{\top} k_t \right) =
\left( 
 \sum_{i=1}^n
 \sum_{j=1}^d
( \sigma_t)_{i,j,\ell} (k_t)_{i,j} 
\right)_{\ell=1,\ldots,d}.
\end{equation}
Solutions $(\psi,p,k,X)$ to 
\eqref{optim:condition-primal} are sought within the space
\begin{equation}
\label{def:mathscr-A}
     \mathscr{A}  \coloneqq \mathcal{A} \times D({\mathbb F})
    \times (
    \underset{\beta \in (0,1)}{\cap} M^{\beta}({\mathbb F},{\mathbb R}^d)) \times S^{2-r}(\mathbb F,\mathbb{R}^n,\mathbb{Q}), 
\end{equation}
where $\mathbb{Q}$ in the first line is the equivalent  measure associated to $q$, i.e., ${\mathbb Q}= q {\mathbb P}$. 
\vskip 5pt 

\noindent Given $\psi \in \mathcal{A}$, the first order condition \eqref{optim:condition-dual} for the nature problem
writes in the form of another forward-backward system, 
with 
 $(Y,Z,q)$ as unknown: 
\begin{equation} \label{optim:condition-dual} \tag{N$[\mu]$}
    \left\{ \begin{array}{rll}
        - \dd Y_t &=  \partial_q F(t,q_t,Y^{\star}_t,Z^{\star}_t,Y_t,Z_t,\psi_t) \dd t - Z_t \cdot \dd W_t        \\
&= (f(t,Y_t,Z_t) + \ell(t,\psi_t))\dd t - Z_t \cdot \dd W_t,  & Y_T =g(X_T^\psi,\mu),         
        \\[0.8em]
         \dd q_t &  = q_t Y^\star_t  \dd t +  q_t Z^\star_t  \cdot \dd W_t, &
        q_0  = 1,\\[0.8em]
        (Y^\star_t,Z^\star_t) & \in \underset{{(Y^{\star \prime},Z^{\star \prime})}}{\argmax}F(t,q_t,Y^{\star \prime},Z^{\star \prime},Y_t,Z_t,\psi_t)  \\
        \Leftrightarrow  
        (Y^\star_t,Z^\star_t) &= (\partial_y f(t,Y_t,Z_t),\nabla_z f(t,Y_t,Z_t)), 
        & \dd \mathbb{P}  \otimes \dd t\text{-a.s.}
    \end{array} \right.
\end{equation}
Solutions 
$(q,Y,Z)$ to \eqref{optim:condition-dual}
are sought in the space
\begin{equation} 
\label{def:mathscr-Q}
    \mathscr{Q}  \coloneqq    {\mathcal Q}\times  D({\mathbb F},{\mathbb Q})
    \times (
    \underset{\beta \in (0,1)}{\cap} M^{\beta}({\mathbb F},{\mathbb R}^d,{\mathbb Q})    
    ).
\end{equation}
\vskip 5pt

Here is now the main statement 
of \cite{DelarueLavigne}
regarding
the inf-sup mean-field stochastic control problem
\eqref{pb:optim-mfg}.


\begin{theorem} \label{theorem:SMP}
Let $\mu \in {\mathcal M}_{2-r}({\mathbb R}^n)$. 
     Then, there exists 
    a unique saddle point
    $({\psi},{q}) \in \mathcal{A} \times \mathcal{Q}$ to Problem \eqref{pb:optim-mfg}, i.e. 
        \begin{equation*}
            \min_{\psi' \in \mathcal{A}} \max_{q' \in \mathcal{Q}} \mathcal{J}(q',\psi') = \max_{q' \in \mathcal{Q}} \min_{\psi' \in \mathcal{A}}  \mathcal{J}(q',\psi') = \mathcal{J}(q,{\psi}).
        \end{equation*}
Moreover, if a pair  
$(\psi,q) \in \mathcal{A} \times \mathcal{Q}$ is a solution to the problem \eqref{pb:optim-mfg}, then
the tuples $(\psi,p,k,X)$, obtained by solving in ${\mathscr A}$ the two decoupled equations in 
\eqref{optim:condition-primal},
and $(q,Y,Z)$, obtained by solving in ${\mathscr Q}$ the two decoupled equations in 
\eqref{optim:condition-dual},
satisfy the optimality conditions in 
\eqref{optim:condition-primal}
and \eqref{optim:condition-dual}
respectively. Conversely,
if
$(\psi,p,k,X,q,Y,Z) \in \mathscr{A} \times  \mathscr{Q} $ is the solution to \eqref{optim:condition-primal}-\eqref{optim:condition-dual}, then the pair 
$(q,\psi) \in \mathcal{Q} \times \mathcal{A}$ is a solution to the problem \eqref{pb:optim-mfg}.
\end{theorem}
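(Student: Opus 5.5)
The argument reduces the saddle-point problem to a pair of coupled first-order systems and then exploits the convex--concave structure of $\mathcal{J}$. Convexity in $\psi$ comes from the linear dynamics \ref{hyp:b}--\ref{hyp:sigma}, the strong convexity of $\ell$ \ref{hyp:L}, and the convexity of $g$ in $x$. Concavity in $q$ comes from rewriting $q\mapsto \mathbb{E}[q_T g+\int q_s\ell]-\mathcal{S}(q)$ as a concave functional of Nature's controls. The plan has four steps.

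\textbf{Step 1: the inner problem in $q$.} Fix $\psi\in\mathcal{A}$ and solve $\sup_q \mathcal{J}[\mu](\psi,q)$ through the BSDE in \eqref{optim:condition-dual} with driver $f(t,y,z)+\ell(t,\psi_t)$ and terminal condition $g(X^\psi_T,\mu)$. The key identity is a Girsanov/Itô computation on $q_tY_t$. Since $f(t,Y,Z)=Y Y^{\star\prime}+Z\cdot Z^{\star\prime}-f^\star(t,Y^{\star\prime},Z^{\star\prime})$ at the maximiser, and the right-hand side is $\ge$ this quantity for any other $(Y^{\star\prime},Z^{\star\prime})$ by Fenchel duality, we get
\[
Y_0 = \mathcal{J}[\mu](\psi,q) \ge \mathcal{J}[\mu](\psi,q'),
\]
with equality exactly when $q$ is generated by $(\partial_y f,\nabla_z f)(Y,Z)$. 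This gives both optimality and uniqueness of Nature's response. The hard part is existence of $(Y,Z)$, since the BSDE is quadratic and the terminal value is unbounded, of growth $|x|^{2-r}$. I would construct $Y$ by truncating $g$, solving the bounded quadratic BSDE, and passing to the limit using a priori bounds. The upper bounds come from the dual representation together with the duality inequality \eqref{ineq:S-S-star}: the quadratic moments of $X^\psi$ under $q\mathbb{P}$ are controlled by $\mathcal{S}(q)+\mathcal{S}^\star(\psi)$, which is where the specific $\gamma$ of \ref{hyp:gamma} enters. The lower bounds use the linear lower growth of $g$. The local-martingale terms are handled in the class $D$ via the spaces in \eqref{def:mathscr-Q}.

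\textbf{Step 2: the outer problem in $\psi$.} Fix $q\in\mathcal{Q}$ and minimise the now-linear-in-$q$-weights convex functional $\psi\mapsto\mathcal{J}[\mu](\psi,q)$. The classical sufficient Pontryagin argument applies: expand $p_t\cdot(X_t-X'_t)$ by Itô's formula and use convexity of $g$ and $\ell$. This shows that the condition $0=q\nabla\ell+c^\top p+r\,\mathrm{Tr}(\sigma^\top k)$ characterises the unique minimiser, uniqueness following from strong convexity weighted by $q>0$. The integrability of $(p,k)$ in $\mathscr{A}$, with $p_T=q_T\nabla g$ only in $L^1$-type spaces, is justified under $q\mathbb{P}$ through the entropy bound \eqref{eq:H:S}.

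\textbf{Step 3: existence of a saddle point.} I would invoke a minimax theorem of Sion/Ky Fan type on $\mathcal{A}\times\mathcal{Q}$, with $\mathcal{Q}$ parametrised convexly by the measures $q\mathbb{P}$. Compactness is supplied on the $q$ side by sublevel sets of $\mathcal{S}$, which are weakly compact in $L^1$ by \eqref{eq:H:S} and de la Vallée-Poussin, and upper semicontinuity is shown along these sets. This yields $\min\max=\max\min$ and a saddle $(\psi,q)$. Uniqueness follows from Steps 1--2: $\psi$ is unique by strict convexity, and $q$ is then unique as the best response to $\psi$. For the converse direction of the theorem, a solution of the coupled system \eqref{optim:condition-primal}--\eqref{optim:condition-dual} gives, by Step 1, that $q$ is the best response to $\psi$ and, by Step 2, that $\psi$ is the best response to $q$; hence $(\psi,q)$ is a saddle point. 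For the direct statement, a saddle point makes each component a best response, so the decoupled solutions satisfy the optimality conditions.

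\textbf{Main obstacle.} I expect the hardest step to be the quadratic BSDE with unbounded terminal condition in Step 1, together with the justification that the stochastic integrals are true martingales under $q\mathbb{P}$. This is exactly where the exponential-moment type admissibility $\mathcal{S}^\star(\psi)<\infty$ and the smallness condition when $r=0$ must be used.
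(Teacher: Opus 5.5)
Several parts of your proposal are sound modulo localization: the verification identity for Nature (It\^o on $q'_tY_t$ plus Fenchel's inequality), the sufficient Pontryagin argument for fixed $q$, uniqueness via strict convexity and concavity, and the converse direction. The paper itself does not prove this theorem; it imports it from \cite{DelarueLavigne}.

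\textbf{Step 1 is false as stated.} For a general $\psi\in\mathcal{A}$, Nature's value $\sup_q\mathcal{J}[\mu](\psi,q)$ can be $+\infty$, and then the quadratic BSDE has no solution, whether by truncation or by any other method. Your a priori bound runs the wrong way. Inequality \eqref{ineq:S-S-star} gives $\mathbb{E}\int_0^T q_s|\psi_s|^2\,\dd s\le\gamma\,\mathcal{S}(q)+\mathcal{S}^\star(\psi)$ with the large $\gamma$ of \ref{hyp:gamma}, and the term $\gamma\,\mathcal{S}(q)$ cannot be absorbed by the $-\mathcal{S}(q)$ in $\mathcal{J}$.

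Here is a concrete counterexample. Take $n=d=1$, $r=0$, $b(t,x,\psi)=\psi$, $\sigma\equiv 1$, $\eta=0$, $f(t,y,z)=\frac\beta2|z|^2$ with $1\le\beta\le L$, $\ell(t,\psi)=\frac12|\psi|^2$, $g\equiv0$, and $T$ small. Then $Y^\star\equiv0$, $\mathcal{S}(q)=\beta^{-1}\mathbb{E}[q_T\ln q_T]$ and $\gamma\ge 8\beta$. With $\Psi=\int_0^T|\psi_t|^2\,\dd t$, Donsker--Varadhan gives
\[
\mathcal{S}^\star(\psi)=\frac{\gamma}{\beta}\ln\mathbb{E}\bigl[e^{\frac{\beta}{\gamma}\Psi}\bigr],\qquad \sup_{q\in\mathcal{Q}}\mathcal{J}[\mu](\psi,q)=\frac1\beta\ln\mathbb{E}\bigl[e^{\frac\beta2\Psi}\bigr].
\]
Choose $\psi_t=\kappa W_{T/2}{\mathds 1}_{\{t>T/2\}}$, with $\kappa$ tuned so that $\Psi$ has exponential moments of order $1/8$ but not of order $1/2$. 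Then $\psi\in\mathcal{A}$, yet Nature's value is infinite. The BSDE $-\dd Y_t=(\frac\beta2|Z_t|^2+\frac12|\psi_t|^2)\,\dd t-Z_t\,\dd W_t$, $Y_T=0$, has no solution. Indeed, $\exp(\beta Y_t+\frac\beta2\int_0^t|\psi_s|^2\dd s)$ would be a positive local martingale, which forces $e^{\beta Y_0}\ge\mathbb{E}[e^{\frac\beta2\Psi}]=+\infty$.

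So the BSDE can only be expected to be solvable at the saddle control. This matches the paper's remark that the BSDE lies beyond standard quadratic-BSDE theory and is handled through the min--max structure. It also matches the fact that $\mathscr{Q}$ is built on $D(\mathbb{F},\mathbb{Q})$ under the saddle measure $\mathbb{Q}=q\mathbb{P}$. The saddle point must therefore be produced first, independently of Step 1.

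\textbf{Step 3 does not produce the saddle point.}
\begin{itemize}
\item Upper semicontinuity in $q$ on entropy sublevel sets fails for $\psi$ as above. Take $A_M=\{\Psi>M\}$ and $q^M=(1-\varepsilon_M)+\varepsilon_M{\mathds 1}_{A_M}/\mathbb{P}(A_M)$ with $\varepsilon_M\sim c/M$. Then $q^M_T\to1$ in $L^1$ with bounded entropy, while $\lim_M\mathcal{J}[\mu](\psi,q^M)>\mathcal{J}[\mu](\psi,1)$. The reason is that $q\Psi$ is not uniformly integrable over entropy balls; the same happens to $\mathbb{E}[q_Tg(X^\psi_T,\mu)]$ when $r=0$.
\item The supremum over $\mathcal{Q}$ is not a supremum over one fixed sublevel set, uniformly in $\psi$.
\item Sublevel sets are not closed in $\mathcal{Q}$: weak limits of $q_T$ can vanish on non-null sets.
\item A Sion theorem with compactness on the $q$-side only yields attainment of $\max_q\inf_\psi$.
\end{itemize}
You still need a minimizer of $\psi\mapsto\sup_q\mathcal{J}[\mu](\psi,q)$, together with a proof that it lies in $\mathcal{A}$; compare the exponential-moment bound of Lemma~\ref{lem:apriori:SStar}. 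Step 2 is only the sufficient half of the Pontryagin principle. It gives neither existence of a minimizer for fixed $q$ nor its admissibility.

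Finally, the necessary condition for Nature requires constructing $(Y,Z)$ in $\mathscr{Q}$ for the saddle $\psi$ specifically, with both existence and uniqueness. The bounds for this must come from the finiteness of the saddle value, not from \eqref{ineq:S-S-star}.
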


In the rest of the article, we denote the 
unique saddle point by $(q^\mu,\psi^\mu)$. Accordingly, the solution to 
\eqref{optim:condition-primal}
is denoted by $(\psi^\mu,p^\mu,k^\mu,X^\mu)$ and 
the solution to 
\eqref{optim:condition-dual}
is denoted by $(q^\mu,Y^\mu,Z^\mu)$. 
The representatives $(Y^\star,Z^\star)$ 
of $q^\mu$ in \eqref{eq:q} are denoted by 
$(Y^{\star,\mu},Z^{\star,\mu})$.

The fact that the cost 
${\mathbb E}[q_T^\mu g(X_T^{\mu}, \mu)]$ is well-defined is the consequence of \ref{hyp:g:mu:fixed}
and of the 
fact that $X^\mu \in S^{2-r}({\mathbb F},{\mathbb R}^n,{\mathbb Q})$. 
Generally speaking, the latter is a consequence of the 
following lemma, which corresponds to \cite[Lemma 41]{DelarueLavigne}:

\begin{lemma} \label{lemma:reg-X-psi-A} 
Let $(q,\psi) \in {\mathcal Q} \times {\mathcal A}$. Then, 
$X^\psi$ belongs to $S^{2-r}(\mathbb{F},\mathbb{Q},\mathbb{R}^n)$, where $r \in \{0,1\}$ is as in 
\ref{hyp:sigma}, and
there exists a constant $C$, independent of $q$ and $\psi$, such that 
\begin{equation*}
     \mathbb{E}\left[ q_T \left|\sup_{t \in [0,T]} \vert X_t^\psi\vert \right|^{2-r} \right] \leq C \left( 1 + \mathcal{S}(q) + \mathcal{S}^\star(\psi)\right).
\end{equation*}
\end{lemma}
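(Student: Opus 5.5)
Since the drift is linear, I will first remove the $b_t x$ term with the resolvent $\Gamma$ from \ref{hyp:gamma}. Setting $\tilde X_t = \Gamma_t^{-1} X_t^\psi$, It\^o's formula gives
\[
\tilde X_t = \eta + \int_0^t \Gamma_s^{-1}(a_s + c_s\psi_s)\,\dd s + \int_0^t \Gamma_s^{-1}\sigma(s,\psi_s)\,\dd W_s .
\]
Because $\Gamma$ and $\Gamma^{-1}$ are bounded, it suffices to bound $\mathbb{E}[q_T \sup_t|\tilde X_t|^{2-r}]$. The contributions of $\eta$ and of the bounded terms $a$ and $\nu$ are harmless, with one caveat about the stochastic integral discussed below.

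\emph{Drift term.} I bound $\sup_t|\int_0^t \Gamma^{-1}c\psi\,\dd s|^{2-r} \le C(1+\int_0^T|\psi_s|^2\dd s)$, using Cauchy--Schwarz together with $x^{2-r}\le 1+x^2$. Since $q$ is a martingale up to the factor $e^{\int Y^\star}$, with $|Y^\star|\le\alpha$, one has $\mathbb{E}[q_T\int_0^T|\psi_s|^2\dd s]\le C\,\mathbb{E}[\int_0^T q_s|\psi_s|^2\dd s]$. This step requires conditioning, or Itô's product rule for $q_t\int_0^t|\psi|^2$. The duality inequality \eqref{ineq:S-S-star} then gives $\mathbb{E}[\int_0^T q_s|\psi_s|^2\dd s]\le \gamma(\mathcal{S}(q)+\mathcal{S}^\star(\psi))$.

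\emph{Stochastic integral term.} This is the main obstacle, because $W$ is not a Brownian motion under $\mathbb{Q}=q_T\mathbb{P}$. I will use Girsanov: normalize $\hat q_t = q_t e^{-\int_0^t Y^\star_s \dd s}=\mathcal{E}_t(\int Z^\star\cdot\dd W)$, so that $\hat W = W - \int_0^\cdot Z^\star_s\,\dd s$ is a Brownian motion under $\hat{\mathbb Q}=\hat q_T\mathbb P$ (after localization if needed). Then
\[
\int_0^t \Gamma^{-1}\sigma(s,\psi_s)\,\dd W_s = \int_0^t \Gamma^{-1}\sigma\,\dd\hat W_s + \int_0^t \Gamma^{-1}\sigma(s,\psi_s) Z^\star_s\,\dd s .
\]
For the $\hat W$-integral, BDG under $\hat{\mathbb Q}$ gives a bound by $C\,\mathbb{E}^{\hat{\mathbb Q}}[(\int_0^T|\sigma(s,\psi_s)|^2\dd s)^{(2-r)/2}]$.
\begin{itemize}
\item If $r=1$, this quantity is at most $C(1+\mathbb{E}^{\hat{\mathbb Q}}\int_0^T|\psi|^2)$, which is controlled as in the drift step.
\item If $r=0$, the volatility is $\nu$, so the quantity is bounded.
\end{itemize}
For the correction term I use Cauchy--Schwarz: $|\int_0^t \sigma Z^\star\,\dd s|\le (\int_0^T|\sigma|^2)^{1/2}(\int_0^T|Z^\star|^2)^{1/2}$, raised to the power $2-r$.
\begin{itemize}
\item If $r=0$, this is $\le C\int_0^T|Z^\star|^2$, whose $\hat{\mathbb Q}$-expectation is the relative entropy. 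It is therefore controlled by $\mathcal{S}(q)$, via the quadratic lower bound on $f^\star$ implied by $f\le |f^0|+\alpha|y|+\frac\beta2|z|^2$, namely $f^\star(y^\star,z^\star)\ge \frac{1}{2\beta}|z^\star|^2-|f^0|$ for $|y^\star|\le\alpha$.
\item If $r=1$, Young's inequality gives $\le C(1+\int|\psi|^2+\int|Z^\star|^2)$, and the same two bounds apply.
\end{itemize}

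\emph{Returning to $q$.} Finally I pass from $\hat{\mathbb Q}$ back to $q_T\mathbb P$, using $q_T\le e^{\alpha T}\hat q_T$ and $\hat q_t\le e^{\alpha T} q_t$. For the integrals of $\psi$, I again replace $\hat q_T$ by $\hat q_s$ inside the time integral, by the martingale property. Combining the three terms yields $C(1+\mathcal{S}(q)+\mathcal{S}^\star(\psi))$. Membership of $X^\psi$ in $S^{2-r}(\mathbb F,\mathbb Q,\mathbb R^n)$ follows directly from this bound. The delicate points are the localization needed to justify Girsanov and BDG when $\hat q$ is only a local martingale (I would stop at $\tau_k$, then apply Fatou), and checking that the entropic lower bound on $f^\star$ holds uniformly.
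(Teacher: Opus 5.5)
Your proof is correct. The paper does not prove this lemma itself; it imports it from \cite[Lemma 41]{DelarueLavigne}. So there is no in-paper argument to compare with, and what you have written is a complete, self-contained proof.

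The ingredients all check out:
\begin{itemize}
\item $\Gamma^{-1}$ removes the $b_tx$ term.
\item $\vert Y^\star\vert\le\alpha$ lets you trade $q$ for its Dol\'eans-Dade part $\hat q$ at a cost $e^{\alpha T}$ in either direction.
\item The supermartingale property of $\hat q$ gives $\mathbb{E}[q_T\int_0^T\vert\psi_s\vert^2\dd s]\le e^{2\alpha T}\mathbb{E}[\int_0^Tq_s\vert\psi_s\vert^2\dd s]$. Then \eqref{ineq:S-S-star} bounds this by $\gamma e^{2\alpha T}(\mathcal{S}(q)+\mathcal{S}^\star(\psi))$.
\item The bound $f^\star(t,y^\star,z^\star)\ge\vert z^\star\vert^2/(2\beta)-\vert f^0_t\vert$ controls the Girsanov correction $\int\Gamma^{-1}\sigma Z^\star\dd s$. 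It holds for every $y^\star$, not only $\vert y^\star\vert\le\alpha$: take $y=0$ in the supremum.
\end{itemize}

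Two bookkeeping remarks.
\begin{itemize}
\item Finiteness of $\mathcal{S}(q)$ already makes $\hat q$ a true martingale, so your localisation/Fatou step is harmless either way. Indeed, by the quadratic lower bound on $f^\star$, the stopped densities $\hat q_{\tau_k}$ have relative entropy $\frac12\mathbb{E}[\int_0^{\tau_k}\hat q_s\vert Z^\star_s\vert^2\dd s]$, bounded uniformly in $k$. Hence they are uniformly integrable.
\item The $Z^\star$ term produces $C(1+\mathcal{S}(q))$. To absorb this into $C(1+\mathcal{S}(q)+\mathcal{S}^\star(\psi))$ you need $\mathcal{S}^\star(\psi)$ bounded below by a constant. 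This follows by testing the supremum defining $\mathcal{S}^\star$ with the bounded pair $(Y^\star,Z^\star)=(\partial_yf,\nabla_zf)(t,0,0)$, at which $f^\star=-f^0_t$.
\end{itemize}

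What your route buys beyond the statement is that every estimate is expressed through $\mathbb{E}\int_0^Tq_s\vert\psi_s\vert^2\dd s$ and $\mathbb{E}\int_0^Tq_s\vert Z^\star_s\vert^2\dd s$, not only through $\mathcal{S}^\star(\psi)$. Apply it to the homogeneous linear equation solved by $X^\psi-X^{\psi'}$ (zero initial condition, $a=\nu=0$). For $r=1$, use Cauchy--Schwarz rather than Young on the correction term, and Jensen on the BDG term. This gives
$\mathbb{E}[q_T\sup_t\vert X^\psi_t-X^{\psi'}_t\vert^{2-r}]\le C\,(\mathbb{E}\int_0^Tq_s\vert\psi_s-\psi'_s\vert^2\dd s)^{(2-r)/2}$,
where $C$ depends on $q$ only through an upper bound on $\mathcal{S}(q)$.

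This is the kind of estimate the proof of Theorem \ref{thm:existence} needs (``following the proof of'' the lemma) to pass from \eqref{eq:schauder:0} to \eqref{eq:schauder:1}. The lemma's statement alone would not suffice there, since $1+\mathcal{S}(q)+\mathcal{S}^\star(\psi-\psi')\ge 1$ does not vanish.
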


\subsection{Stability Estimates}
\label{subse:3:3}

The next result is taken
from \cite{DelarueLavigne}. It is in fact 
part of the proof on which the derivation of  the stochastic minimum principle for 
the problem \eqref{optim:condition-primal} relies. 

\begin{lemma} \label{lemma:sufficient}
Let $\mu, \tilde{\mu} \in \mathcal{M}_{2-r}(\mathbb{R}^n)$. 
Then,
    \begin{equation} \label{ineq:p-delta-X}
        \mathbb{E}\left[p_T^\mu \cdot \left(X_T^{\tilde \mu}- X_T^\mu\right) \right] = - \mathbb{E}\left[ \int_0^T 
        {q}_s^\mu \nabla_{\psi} \ell\left(s,\psi_s^\mu\right) \cdot \left(\psi_s^{\tilde \mu} - \psi_s^\mu)\right) \dd s\right],
    \end{equation}
    which implicitly implies that the expectations right above are well-defined. 
\end{lemma}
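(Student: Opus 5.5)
The plan is to apply Itô's formula to the product $p_t^\mu \cdot (X_t^{\tilde\mu} - X_t^\mu)$, localize, and pass to the limit. Set $\delta X := X^{\tilde\mu} - X^\mu$ and $\delta\psi := \psi^{\tilde\mu} - \psi^\mu$.

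\textbf{Dynamics of $\delta X$.} By linearity of $b$ and $\sigma$ in \ref{hyp:b}--\ref{hyp:sigma},
\[
\dd \delta X_t = (b_t \delta X_t + c_t \delta\psi_t)\,\dd t + r\,\sigma_t \delta\psi_t\,\dd W_t,
\qquad \delta X_0 = 0 .
\]
The constant terms $a$ and $\nu$ cancel.

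\textbf{Itô's formula.} The adjoint equation in \eqref{optim:condition-primal} reads $\dd p^\mu_t = -b_t^\top p_t^\mu\,\dd t + k_t^\mu\,\dd W_t$. Itô's formula then gives
\[
\dd\bigl(p^\mu_t\cdot \delta X_t\bigr) = \bigl(p^\mu_t \cdot c_t\delta\psi_t + r\,\Tr(\sigma_t^\top k^\mu_t)\cdot\delta\psi_t\bigr)\dd t + \dd M_t ,
\]
where $M$ is a local martingale, with the convention \eqref{def:trace-sigma-k}. The terms $b_t^\top p^\mu_t\cdot \delta X_t$ cancel exactly. By the optimality condition in \eqref{optim:condition-primal}, the drift equals $-q_t^\mu \nabla_\psi \ell(t,\psi^\mu_t)\cdot\delta\psi_t$.

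\textbf{Localization.} Take stopping times $\tau_N \uparrow T$ reducing $M$. This yields
\[
\mathbb{E}\bigl[p^\mu_{\tau_N}\cdot\delta X_{\tau_N}\bigr] = -\mathbb{E}\Bigl[\int_0^{\tau_N} q^\mu_s \nabla_\psi\ell(s,\psi^\mu_s)\cdot \delta\psi_s\,\dd s\Bigr].
\]

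\textbf{Passage to the limit (main obstacle).} The only available integrability is weak: $k^\mu$ is merely in $\cap_{\beta<1} M^\beta$, $p^\mu$ is in class $D$, and $\psi$ has only the entropic bound $\mathcal S^\star(\psi)<\infty$.

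For the right-hand side, I would use \ref{hyp:L}, which gives $|\nabla_\psi\ell(s,\psi)|\le C(1+|\psi|)$. Then
\[
q\,|\nabla_\psi \ell(s,\psi^\mu)|\,|\delta\psi| \le C q\bigl(1+|\psi^\mu|^2+|\psi^{\tilde\mu}|^2\bigr).
\]
The duality inequality \eqref{ineq:S-S-star} shows that $\mathbb{E}\int_0^T q^\mu_s|\psi_s|^2\,\dd s \le \gamma(\mathcal S(q^\mu)+\mathcal S^\star(\psi))<\infty$ for both controls. Dominated convergence then applies, and it also proves that the right-hand side of \eqref{ineq:p-delta-X} is well defined.

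For the left-hand side, I need uniform integrability of $p^\mu_{\tau_N}\cdot \delta X_{\tau_N}$. I would try to write $p^\mu_t = \Gamma_t^{-\top}\mathbb{E}[\Gamma_T^\top q^\mu_T\nabla_x g(X^\mu_T,\mu)\mid\mathcal F_t]$ and bound it by $C\,\mathbb{E}[q_T^\mu(1+|X^\mu_T|^{1-r})\mid\mathcal F_t]$ using \ref{hyp:g:mu:fixed}. When $r=1$, $p^\mu$ is dominated by $C q^\mu_t$, and $\mathbb{E}[q_{\tau}\sup|\delta X|]$ is controlled by Lemma \ref{lemma:reg-X-psi-A} under $\mathbb{Q}=q^\mu\mathbb{P}$. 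The change of measure requires a uniform-integrability argument with respect to $\tau$, for which I would use the martingale property of $q^\mu e^{-\int Y^\star}$.

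When $r=0$, the product involves $|X^\mu_T|\,|\delta X|$, which is quadratic. This is controlled by the Young/entropy inequality $ab\le \mathrm{Ent}$-type bounds combined with \eqref{eq:H:S} and the exponential moments of $\sup|X|$ that $\mathcal S^\star(\psi)<\infty$ provides when $\sigma$ does not depend on $\psi$. This step is the delicate one, and I would borrow the estimates of \cite{DelarueLavigne}. Once it is in place, letting $N\to\infty$ gives \eqref{ineq:p-delta-X}.
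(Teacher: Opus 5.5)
Your proposal is correct and follows the same route as the argument the paper imports from \cite[Lemma 38]{DelarueLavigne}: It\^o's formula for $p^\mu_t\cdot(X^{\tilde\mu}_t-X^\mu_t)$, cancellation of the $b_t$-terms, identification of the drift through the optimality condition in \eqref{optim:condition-primal}, then localization, dominated convergence on the right-hand side via \eqref{ineq:S-S-star}, and uniform integrability on the left-hand side. The step you flag as delicate for $r=0$ needs nothing beyond Lemma \ref{lemma:reg-X-psi-A}, and no separate entropy or exponential-moment argument: since $\Gamma^\top p^\mu$ is a local martingale of class $D$, hence a uniformly integrable martingale, one has $\vert p^\mu_\tau\cdot\delta X_\tau\vert\le C\,\mathbb{E}[q^\mu_T(1+\sup_t\vert X^\mu_t\vert^{1-r})\sup_t\vert\delta X_t\vert\mid\mathcal{F}_\tau]$ for every stopping time $\tau$, and the variable inside the conditional expectation is integrable for both values of $r$ by Young's inequality and Lemma \ref{lemma:reg-X-psi-A} (applied with $q^\mu$ to both $\psi^\mu$ and $\psi^{\tilde\mu}$), which gives the required uniform integrability directly (and makes the martingale property of $q^\mu e^{-\int_0^\cdot Y^{\star,\mu}_s\dd s}$ unnecessary in the case $r=1$).
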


\begin{proof}
This is the penultimate display in the proof of \cite[Lemma 38]{DelarueLavigne}.
\end{proof}

We now recall the following result from convex analysis. 

\begin{lemma}
\label{lem:c:convex}
There exists a constant $c>0$, only depending on the parameters in 
\ref{hyp:b}-\ref{hyp:g:mu:fixed}, 
 such that, for any 
 $t \in [0,T]$, 
 $y_1^\star,y_2^\star \in {\mathbb R}$, 
 $z_1^\star,z_2^\star \in {\mathbb R}^d$ and 
 $\theta \in (0,1)$,
 \begin{equation}
 \label{eq:strict:convexity:1}
     \begin{split}
         f^\star\left(t,\theta
         y_1^\star + (1-\theta) 
         y_2^\star, \theta 
         z_1^\star + (1- \theta) 
         z_2^\star \right) 
         &\leq 
         \theta f^\star \left( t, 
         y_1^\star,z_1^\star \right) 
         + (1- \theta) 
         f^\star \left( t, 
         y_2^\star,z_2^\star 
         \right) 
         \\
&\hspace{15pt}         - c \theta ( 1- \theta)
         \left( 
         \vert 
         y_1^\star - y_2^\star \vert^2 + 
         \vert z_1^\star - z_2^\star 
         \vert^2 
         \right). 
              \end{split}
 \end{equation}
 In particular, 
 for any $t \in [0,T]$,
$y \in {\mathbb R}$, $z \in {\mathbb R}^d$, $h_y \in {\mathbb R}$  and $h_z \in {\mathbb R}^d$,  
\begin{equation}
 \label{eq:strict:convexity:2}
\begin{split}
&f^\star\left(t,\partial_yf(t,y,z) + h_y,\partial_z f(t,y,z)+h_z\right)
\\
&\geq f^\star(\partial_yf(t,y,z) ,\partial_z f(t,y,z))
  + y\cdot h_y + z\cdot h_z + c \left( \vert h_y\vert^2 +\vert h_z \vert^2\right).
\end{split}
\end{equation}
\end{lemma}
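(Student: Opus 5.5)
Since $f$ is $C^2$ in $(y,z)$ with Hessian bounded by $L$, the plan is to show that $f^\star$ is $\frac{1}{2L'}$-strongly convex, with $L'$ a constant multiple of $L$ depending only on the dimension through the choice of operator norm. This is the standard duality between $L'$-smoothness of a convex function and $1/L'$-strong convexity of its Fenchel conjugate, applied pointwise in $(\omega,t)$.

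\textbf{Step 1: co-coercivity of $\nabla f$.} Write $u=(y,z)\in\mathbb{R}^{1+d}$ and $\nabla f = (\partial_y f,\nabla_z f)$. The bound on the Hessian gives $f(t,v) \le f(t,u) + \nabla f(t,u)\cdot(v-u) + \frac{L'}{2}\vert v-u\vert^2$. Combined with convexity, the usual argument (minimize $v\mapsto f(v)-\nabla f(u)\cdot v$) yields co-coercivity:
\[
(\nabla f(t,u)-\nabla f(t,v))\cdot(u-v)\ge \tfrac{1}{L'}\vert \nabla f(t,u)-\nabla f(t,v)\vert^2 .
\]

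\textbf{Step 2: conjugate at points of the range.} For $w^\star=\nabla f(t,u)$, the supremum defining $f^\star$ is attained at $u$, so $f^\star(t,w^\star)=w^\star\cdot u-f(t,u)$ and $u\in\partial f^\star(t,w^\star)$. Strong monotonicity of $\partial f^\star$ with modulus $1/L'$ then follows from Step 1.

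\textbf{Step 3: points outside the range.} The main obstacle is that $\nabla f(t,\cdot)$ need not be onto, and $f^\star$ may be $+\infty$ (for instance when $\vert y^\star\vert>\alpha$). The cleanest route avoids monotonicity and uses the Moreau-type fact that $f$ is $L'$-smooth and convex if and only if $g:=\frac{L'}{2}\vert\cdot\vert^2 - f$ is convex. Then
\[
f^\star(w)-\tfrac{1}{2L'}\vert w\vert^2=\sup_u\Bigl\{-\tfrac{L'}{2}\bigl\vert u-w/L'\bigr\vert^2 + g(u)\Bigr\},
\]
and one shows this is convex in $w$ as an infimal-convolution dual. Concretely, $f = \frac{L'}{2}\vert\cdot\vert^2 - g$ with $g$ convex, so $f^\star=\bigl(h\,\square\, g^\star\bigr)$ with $h=\frac{1}{2L'}\vert\cdot\vert^2$, using the conjugate of a difference with a convex quadratic (Toland–Singer / Hiriart-Urruty). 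An infimal convolution of $h$ with a convex function, minus $h$, is convex. Hence $f^\star-\frac{1}{2L'}\vert\cdot\vert^2$ is convex, including at infinite values. This gives \eqref{eq:strict:convexity:1} with $c=\frac{1}{2L'}$, via the identity $\theta\vert a\vert^2+(1-\theta)\vert b\vert^2-\vert\theta a+(1-\theta)b\vert^2=\theta(1-\theta)\vert a-b\vert^2$.

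\textbf{Step 4: deduce \eqref{eq:strict:convexity:2}.} Let $w_0=\nabla f(t,u)$ with $u=(y,z)$ and $h=(h_y,h_z)$. Since $u\in\partial f^\star(t,w_0)$ by Step 2, and $f^\star-c\vert\cdot\vert^2$ is convex with subgradient $u-2cw_0$ at $w_0$, we get
\[
f^\star(w_0+h)-c\vert w_0+h\vert^2\ge f^\star(w_0)-c\vert w_0\vert^2+(u-2cw_0)\cdot h .
\]
Rearranging gives $f^\star(w_0+h)\ge f^\star(w_0)+u\cdot h+c\vert h\vert^2$, which is \eqref{eq:strict:convexity:2} (the statement writes $f^\star$ there without the argument $t$). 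The constant $c$ depends only on $L$ and the norm convention.
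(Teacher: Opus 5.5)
Step 3 carries the whole proof, and as written it rests on two false claims.

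First, the identity $f^\star=h\,\square\,g^\star$ with $h=\frac{1}{2L'}\vert\cdot\vert^2$ is wrong. Second, the assertion that an infimal convolution of $h$ with a convex function, minus $h$, is convex is also wrong. For any $\phi$,
\[
(h\,\square\,\phi)(w)-h(w)=\inf_{v}\Bigl\{-\tfrac{1}{L'}\,w\cdot v+\tfrac{1}{2L'}\vert v\vert^2+\phi(v)\Bigr\}
\]
is an infimum of affine functions of $w$, hence \emph{concave}. This is the fact that Moreau envelopes are $L'$-smooth, which is the opposite of the strong convexity you need. A one-dimensional check: take $f\equiv 0$ and $L'=1$. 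Then $g=g^\star=\frac12\vert\cdot\vert^2$ and $(h\,\square\,g^\star)(w)=\frac14 w^2$, whereas $f^\star$ equals $0$ at $w=0$ and $+\infty$ elsewhere.

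Your displayed identity for $f^\star(w)-\frac{1}{2L'}\vert w\vert^2$ is correct. However, it is a supremum over $u$ of functions that are concave in $w$, so it does not give convexity on its own. The key claim, convexity of $f^\star-\frac{1}{2L'}\vert\cdot\vert^2$, is therefore not established by your argument.

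The repair is short. The Toland--Singer/Hiriart-Urruty formula is a supremum, not an infimal convolution: $(q-g)^\star(w)=\sup_v\{q^\star(w+v)-g^\star(v)\}$ with $q=\frac{L'}{2}\vert\cdot\vert^2$. Concretely, $g$ is finite and convex on all of $\mathbb{R}\times\mathbb{R}^d$ (this uses only the upper Hessian bound, so your ``if and only if'' is more than you need), hence closed. So $g(u)=\sup_v\{v\cdot u-g^\star(v)\}$. Insert this into your displayed identity, exchange the two suprema, and maximize explicitly in $u$ (the maximum is at $u=(w+v)/L'$). This gives
\[
f^\star(w)-\tfrac{1}{2L'}\vert w\vert^2=\sup_{v}\Bigl\{\tfrac{1}{L'}\,v\cdot w+\tfrac{1}{2L'}\vert v\vert^2-g^\star(v)\Bigr\},
\]
a supremum of affine functions of $w$. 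It is therefore convex as an extended-real-valued function, including where $f^\star=+\infty$.

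With this in place, the $\theta(1-\theta)$ identity and your Step 4 are fine, and Step 1 is not needed. In Step 4, the subgradient $u-2cw_0$ of $f^\star-c\vert\cdot\vert^2$ at $w_0$ follows from the sum rule. More simply, apply the first inequality of the lemma to $w_0+h$ and $w_0$, use $f^\star(w_0+\theta h)\ge f^\star(w_0)+\theta\,u\cdot h$, divide by $\theta$ and let $\theta\downarrow 0$.

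For comparison, the paper avoids conjugate calculus altogether. It picks near-maximizers $(y_n,z_n)$ in the supremum defining $f^\star$ at $w$ and tests that supremum at the shifted point $(y_n,z_n)+h/(2C)$. The quadratic upper bound on $f$ then gives
$f^\star(w+h)\ge f^\star(w)+\frac{1}{4C}\vert h\vert^2-\frac1n+(\text{terms linear in }h)$.
Combining the increments $\theta h$ and $-(1-\theta)h$ cancels the linear terms. The second inequality of the lemma follows by taking $(y_n,z_n)=(y,z)$ exactly when $w=\nabla f(t,y,z)$.
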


\begin{proof}
The proof is divided in two steps. 
\vskip 4pt

\noindent \textit{Step 1.}
We start with the following preliminary step. For 
$y^\star \in {\mathbb R}$
and $z^\star \in {\mathbb R}^d$ satisfying 
$f^\star(t,y^\star,z^\star) < + \infty$, there exists a sequence 
$(y_n,z_n)_{n \in {\mathbb N}^*}$ in 
${\mathbb R} \times {\mathbb R}^d$ such that 
\begin{equation}
\label{eq:strict:convexity:3}  
f^\star(t,y^\star,z^\star) \leq y^\star y_n + z^\star \cdot z_n - f(t,y_n,z_n) + \frac1n, 
\quad n \in {\mathbb N}^*. 
\end{equation}
And then,
for $h_y,k_y\in {\mathbb R}$
and $h_z,k_z \in {\mathbb R}^d$, 
\begin{equation*} 
\begin{split} 
&f^\star(t,y^\star+h_y,z^\star+h_z)
\\
&\geq 
(y_n + k_y) (y^\star + h_y) 
+ (z_n + k_z) \cdot (z^\star + 
h_z) - f(t,y_n+k_y,z_n+k_z) 
\\
&=  
y_n y^\star 
+ z_n \cdot z^\star 
- f(t,y_n+k_y,z_n+k_z)
\\
&\hspace{15pt} 
+ 
y_n h_y + z_n \cdot h_z 
+ 
k_y y^\star + 
k_z \cdot z^\star 
\\
&\hspace{15pt} 
+   h_y k_y + h_z \cdot k_z
\\
&\geq 
f^\star(t,y^\star,z^\star) 
+ f(t,y_n,z_n) 
- f(t,y_n+k_y,z_n+k_z)
- \frac1n
\\
&\hspace{15pt} 
+ 
y_n h_y + z_n \cdot h_z 
+ 
k_y y^\star + 
k_z \cdot z^\star  
\\
&\hspace{15pt} 
+   h_y k_y + h_z \cdot k_z.
\end{split}
\end{equation*}
Using the regularity properties of
$f$ stated in 
\ref{eq:assumption1-f}, 
we deduce that there exists a constant $C>0$ such that 
\begin{equation*}
    \begin{split}
&f^\star(t,y^\star+h_y,z^\star+h_z)
\\
&\geq f^\star(t,y^\star,z^\star) - \partial_y f(t,y_n,z_n)  k_y 
- \partial_z f(t,y_n,z_n) \cdot k_z 
- 
C \left( \vert k_y \vert^2+ 
\vert k_z \vert^2 \right) 
- 
\frac1n
\\
&\hspace{15pt} 
+ 
y_n h_y + z_n \cdot h_z 
+ 
k_y y^\star + 
k_z \cdot z^\star  
\\
&\hspace{15pt} 
+   h_y k_y + h_z \cdot k_z. 
\end{split}
\end{equation*} 
Choose now 
$k_y=h_y/(2C)$ and 
$k_z=h_z/(2C)$, and deduce 
\begin{equation}
\label{eq:strict:convexity:4}
    \begin{split}
f^\star(t,y^\star+h_y,z^\star+h_z)
&\geq f^\star(t,y^\star,z^\star) 
+ 
\frac1{4 C} \left( \vert h_y \vert^2+ 
\vert h_z \vert^2 \right) 
- 
\frac1n
\\
&\hspace{15pt} -
\frac1{2C} \partial_y f(t,y_n,z_n)  h_y 
- \frac1{2C} \partial_z f(t,y_n,z_n) \cdot h_z 
\\
&\hspace{15pt} 
+ 
y_n h_y + z_n \cdot h_z 
+ 
\frac1{2C} h_y y^\star + 
\frac1{2C}
h_z \cdot z^\star.
\end{split}
\end{equation} 
\vskip 4pt

\noindent \textit{Step 2.}
We now derive \eqref{eq:strict:convexity:1}. 
Apply \eqref{eq:strict:convexity:4}  twice, once with $\theta(h_y,h_z)$ substituted for $(h_y,h_z)$ and once with 
$-(1-\theta) (h_y,h_z)$ substituted for 
$(h_y,h_z)$, and make the 
$(1-\theta,\theta)$ convex combination of the resulting two inequalities. By linearity of the terms on the last line in \eqref{eq:strict:convexity:4}, we get
\begin{equation*} 
\begin{split} 
&\theta f^\star\left(t,y^\star -(1- \theta) h_y,z^\star - (1- \theta) 
h_z 
\right)
+
(1- \theta) 
f^\star(t,y^\star + \theta h_y, 
z^\star + \theta h_ z)
\\
& \geq  f^\star(t,y^\star,z^\star)  
   + \frac{1}{4C} \theta (1-\theta)
   \bigl( \vert h_y \vert^2 + \vert h_z \vert^2 \bigr) - \frac{1}n. 
\end{split}
\end{equation*} 
Letting  $n$ to $+\infty$ and, 
for $y_1^\star$, $y_2^*$,
$z_1^\star$ and $z_2^\star$
as in the statement such that 
$f^\star(t,y_1^\star,z_1^\star)$
and $f^\star(t,y_2^\star,z_2^\star)$
are finite, 
we apply the above display with 
$y^\star = \theta y_1^\star + (1- \theta) y_2^\star$, 
$z^\star = \theta z_1^\star + (1-\theta) z_2^\star$ (by convexity, 
$f^\star(t,y^\star,z^\star) < + \infty$), 
$h_y= y_2^\star-y_1^\star$
and
$h_z = z_2^\star - z_1^\star$
(so that 
$y^\star - (1-\theta) h_y 
=y_1^\star$ and
$y^\star + \theta h_y 
=y_2^\star$, 
and similarly when $z$ is substituted for $y$). We get 
\eqref{eq:strict:convexity:1}
(when $f^\star(t,y_1^\star,z_1^\star)$ and $f^\star(t,y_2^\star,z_2^\star)$ are finite). When 
$f^\star(t,y_1^\star,z_1^\star)$ or $f^\star(t,y_2^\star,z_2^\star)$ 
is $ + \infty$, \eqref{eq:strict:convexity:1} is necessarily true. 

\vskip 4pt

\noindent \textit{Step 3.}
In order to get \eqref{eq:strict:convexity:2}, we recall from Fenchel's duality  that, for any $y \in {\mathbb R}$ and 
$z \in {\mathbb R}^d$, 
\begin{equation*} 
\begin{split}
&f^\star\left(t,\partial_y f(t,y,z),\partial_z f(t,y,z) 
\right) + f(t,y,z) = y   \partial_y f(t,y,z)
+ z \cdot 
\partial_z f(t,y,z). 
\end{split}
\end{equation*} 
This gives an identity in \eqref{eq:strict:convexity:3},
when $y^\star = \partial_y f(t,y,z)$, $z^\star= \partial_z f(t,y,z)$, 
$y_n=y$, $z_n=z$, 
and $1/n$ is formally replaced by $0$. 
Then, \eqref{eq:strict:convexity:4}
implies 
\eqref{eq:strict:convexity:2}. 
This completes the proof. 
\end{proof}

Here is now the main result of this subsection: 
\begin{proposition}
\label{prop:main:estimate}
There exists a constant $c>0$, only depending on the parameters in 
\ref{hyp:b}-\ref{hyp:g:mu:fixed}, such that, 
the following two inequalities hold true, for 
any 
 $\mu,\tilde \mu \in {\mathcal M}_{2-r}({\mathbb R}^n)$:
\begin{equation}
\label{eq:corol:9:first:claim} 
\begin{split}
&{\mathbb E} \left[ q_T^{\tilde \mu} \left( g(X_T^\mu,\tilde \mu) 
-
g(X_T^\mu,  \mu) \right) 
 + q_T^{ \mu} \left( g(X_T^{\tilde \mu},  \mu) 
-
g(X_T^{\tilde \mu},\tilde \mu) \right) 
\right]  
\\
&\geq c 
{\mathbb E} \left[ \int_0^T
\left( q_t^{ \mu} 
+q_t^{\tilde \mu} 
\right) 
\left(\vert \psi_t^{\tilde \mu} - \psi_t^{\mu} 
\vert^2 + \vert Y_t^{\star,\tilde \mu} - Y_t^{\star,\mu} \vert^2 
+ \vert Z_t^{\star,\tilde \mu} - Z_t^{\star,\mu} \vert^2 \right) \dd t \right],
\end{split}
\end{equation}
and
\begin{equation}
\label{corol:9:step2:end} 
\begin{split}
&{\mathbb E} \left[ \left( q_T^\mu 
- q_T^{\tilde \mu} \right) \left( g(X_T^\mu,\mu) - g(X_T^{\tilde \mu},\tilde \mu) \right)
\right]
\\
&\geq
{\mathbb E} \left[  \left( q_T^{\mu} \nabla_x g(X_T^{\mu}, \mu) 
- q_T^{\tilde \mu} \nabla_x g(X_T^{\tilde \mu},\tilde \mu) \right) 
\cdot (X_T^{\mu} - X_T^{\tilde \mu})
\right]
\\
&\hspace{15pt} + c 
{\mathbb E} \left[ \int_0^T
\left( q_t^{\mu} + q_t^{\tilde \mu}\right)  \left(\vert \psi_t^{\tilde \mu} - \psi_t^{\mu} 
\vert^2 + \vert Y_t^{\star,\tilde \mu} - Y_t^{\star,\mu} \vert^2 
+ \vert Z_t^{\star,\tilde \mu} - Z_t^{\star,\mu} \vert^2 \right) \dd t \right].
\end{split}
\end{equation}
\end{proposition}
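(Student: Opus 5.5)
The plan is to compare the two saddle points $(q^\mu,\psi^\mu)$ and $(q^{\tilde\mu},\psi^{\tilde\mu})$ by exploiting the saddle-point inequalities for each problem \eqref{pb:optim-mfg}, with the quantitative gains coming from strong convexity of $\ell$ (assumption \ref{hyp:L}) and from the strong convexity of $f^\star$ proved in Lemma \ref{lem:c:convex}. Write $\mathcal{J}[\mu](\psi,q)$ as in \eqref{pb:optim-mfg}. The saddle property gives $\mathcal{J}[\mu](\psi^\mu,q)\le \mathcal{J}[\mu](\psi^\mu,q^\mu)\le \mathcal{J}[\mu](\psi,q^\mu)$. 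The idea is to improve each of these two inequalities by a quadratic remainder.

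\textbf{Step 1: the player side.} For fixed $q=q^\mu$, expand $\mathcal{J}[\mu](\psi^{\tilde\mu},q^\mu)-\mathcal{J}[\mu](\psi^\mu,q^\mu)$. Convexity of $g(\cdot,\mu)$ gives $q_T^\mu(g(X_T^{\tilde\mu},\mu)-g(X_T^\mu,\mu))\ge p_T^\mu\cdot(X_T^{\tilde\mu}-X_T^\mu)$. Strong convexity of $\ell$ gives $q^\mu_s(\ell(s,\psi^{\tilde\mu}_s)-\ell(s,\psi^\mu_s))\ge q_s^\mu\nabla_\psi\ell(s,\psi^\mu_s)\cdot(\psi^{\tilde\mu}_s-\psi^\mu_s)+\frac{1}{2L}q^\mu_s|\psi^{\tilde\mu}_s-\psi^\mu_s|^2$. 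Lemma \ref{lemma:sufficient} then cancels the linear terms, leaving
\[
\mathcal{J}[\mu](\psi^{\tilde\mu},q^\mu)-\mathcal{J}[\mu](\psi^\mu,q^\mu)\ge \tfrac{1}{2L}\mathbb{E}\Bigl[\int_0^T q^\mu_s|\psi^{\tilde\mu}_s-\psi^\mu_s|^2\dd s\Bigr].
\]
If the terminal-term version is needed, I keep the convexity remainder explicitly.

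\textbf{Step 2: Nature's side.} For fixed $\psi=\psi^\mu$, I show
\[
\mathcal{J}[\mu](\psi^\mu,q^\mu)-\mathcal{J}[\mu](\psi^\mu,q^{\tilde\mu})\ge c\,\mathbb{E}\Bigl[\int_0^T q^{\tilde\mu}_s\bigl(|Y^{\star,\tilde\mu}_s-Y^{\star,\mu}_s|^2+|Z^{\star,\tilde\mu}_s-Z^{\star,\mu}_s|^2\bigr)\dd s\Bigr].
\]
To do so, apply Itô's formula to $q^{\tilde\mu}_tY^\mu_t$ using \eqref{optim:condition-dual}. This gives $\mathbb{E}[q^{\tilde\mu}_Tg(X^\mu_T,\mu)]=Y_0^\mu+\mathbb{E}\int_0^T q^{\tilde\mu}_s(Y^\mu_sY^{\star,\tilde\mu}_s+Z^\mu_s\cdot Z^{\star,\tilde\mu}_s-f(s,Y^\mu_s,Z^\mu_s)-\ell(s,\psi^\mu_s))\dd s$. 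Hence
\[
\mathcal{J}[\mu](\psi^\mu,q^{\tilde\mu})=Y^\mu_0-\mathbb{E}\int_0^T q_s^{\tilde\mu}\bigl[f^\star(s,Y^{\star,\tilde\mu}_s,Z^{\star,\tilde\mu}_s)-Y^\mu_sY^{\star,\tilde\mu}_s-Z^\mu_s\cdot Z^{\star,\tilde\mu}_s+f(s,Y^\mu_s,Z^\mu_s)\bigr]\dd s,
\]
and equality with $q^\mu$ gives $Y_0^\mu$, by Fenchel equality. Applying \eqref{eq:strict:convexity:2} with $(y,z)=(Y^\mu_s,Z^\mu_s)$ and $h=(Y^{\star,\tilde\mu}-Y^{\star,\mu},Z^{\star,\tilde\mu}-Z^{\star,\mu})$ shows the bracket is at least $c|h|^2$. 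The local-martingale terms must be removed by localization; this is where the class-$D$ and $M^\beta$ integrability from Theorem \ref{theorem:SMP} enters. I expect this to be the main technical obstacle, and I would rely on the arguments of \cite{DelarueLavigne}.

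\textbf{Step 3: symmetrize and combine.} Exchange the roles of $\mu$ and $\tilde\mu$, so that the gains carry the weights $q^{\tilde\mu}$ for $\psi$ and $q^\mu$ for $(Y^\star,Z^\star)$. Then sum four inequalities: the two player inequalities and the two Nature inequalities for each problem. Adding $\mathcal{J}[\mu](\psi^{\tilde\mu},q^\mu)-\mathcal{J}[\mu](\psi^\mu,q^{\tilde\mu})$ and its swap, all running and entropy terms cancel pairwise, since they do not depend on $\mu$. What remains is exactly $\mathbb{E}[q^\mu_T(g(X^{\tilde\mu}_T,\mu)-g(X^{\tilde\mu}_T,\tilde\mu))+q^{\tilde\mu}_T(g(X^\mu_T,\tilde\mu)-g(X^\mu_T,\mu))]$. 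The right-hand side collects all four quadratic gains, with both weights appearing. To obtain both weights on each of $\psi$, $Y^\star$ and $Z^\star$, I also use the cross-problem inequalities, which yields \eqref{eq:corol:9:first:claim} after halving $c$.

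For \eqref{corol:9:step2:end}, I redo Step 1 without discarding the convexity of $g$: I keep $q^\mu_T(g(X_T^{\tilde\mu},\mu)-g(X_T^\mu,\mu))$ and use $p^\mu_T=q^\mu_T\nabla_xg(X^\mu_T,\mu)$. Combining with Step 2, which is an exact identity with remainder, for both orderings and then rearranging isolates $\mathbb{E}[(q_T^\mu-q_T^{\tilde\mu})(g(X^\mu_T,\mu)-g(X^{\tilde\mu}_T,\tilde\mu))]$ on the left. The gradient pairing and the quadratic gains appear on the right.
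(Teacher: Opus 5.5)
Your proposal is correct and follows essentially the paper's argument. It uses the same building blocks: Itô's formula on $q^{\tilde\mu}Y^\mu$ combined with Lemma~\ref{lem:c:convex} for Nature, and strong convexity of $\ell$ with Lemma~\ref{lemma:sufficient} and convexity of $g$ for the player, followed by symmetrization in $(\mu,\tilde\mu)$; the paper merely chains your Nature inequality for problem $\mu$ with your player inequality for problem $\tilde\mu$ into a single estimate before swapping. Two cosmetic points: your four inequalities already put the weight $q^\mu+q^{\tilde\mu}$ on each of $\psi$, $Y^\star$, $Z^\star$, so no extra ``cross-problem'' inequalities or halving of $c$ are needed; and after localization Step 2 holds only as an inequality (as in the paper, via $\lim_{A\to\infty}$), not as an exact identity, but that inequality is all you use.
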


The fact that the expectations on the first line of \eqref{eq:corol:9:first:claim}
and on the first and second lines of \eqref{corol:9:step2:end} are well-defined is a consequence of Lemma \ref{lemma:reg-X-psi-A}.

\begin{proof}
The proof is divided in three steps. 
\vskip 4pt

\noindent \textit{Step 1.}
From  the proof of  \cite[Lemma 30]{DelarueLavigne} (starting from the penultimate display in the proof, and then using (189) and (187) therein), 
we have
\begin{equation*} 
\begin{split}
&{\mathbb E} \left[ q_T^\mu g(X_T^\mu,\mu) 
+ \int_0^T q_t^\mu \ell(t,\psi_t^\mu) \dd t 
\right] - {\mathcal S}(q^\mu)
\\
&\geq
{\mathbb E} \left[ q_T^{\tilde \mu} g(X_T^\mu,\mu) 
+ \int_0^T q_t^{\tilde \mu} \ell(t,\psi_t^\mu) \dd t 
\right] - {\mathcal S}(q^{\tilde \mu})
  + \lim_{A \rightarrow \infty}
{\mathbb E} 
\left[ \int_0^{T \wedge \tau_A} 
q_t^{\tilde \mu} \Delta f_t^\star \dd t \right],
\end{split}
\end{equation*}
where 
$(\tau_A)_{A >0}$ is a collection of stopping time that converges almost surely to $T$ (as $A$ tends to $+\infty$), and 
\begin{equation*}
\begin{split}
 \Delta f_t^\star
&= 
f^\star\left(t,Y_t^{\star,\tilde \mu},Z_t^{\star,\tilde \mu}\right) 
-
f^\star\left(t,Y_t^{\star,  \mu},Z_t^{\star,  \mu}\right) 
\\
&\hspace{15pt} - 
\left( Y_t^{\star,\tilde \mu} - Y_t^{\star,  \mu} \right) Y_t^\mu 
- 
\left( Z_t^{\star,\tilde \mu} - Z_t^{\star,  \mu} \right) \cdot Z_t^\mu.
\end{split}
\end{equation*} 
By Lemma  
\ref{lem:c:convex}, we obtain 
\begin{equation*} 
\begin{split}
&{\mathbb E} \left[ q_T^\mu g(X_T^\mu,\mu) 
+ \int_0^T q_t^\mu \ell(t,\psi_t^\mu) \dd t 
\right] - {\mathcal S}(q^\mu)
\\
&\geq
{\mathbb E} \left[ q_T^{\tilde \mu} g(X_T^\mu,\mu) 
+ \int_0^T q_t^{\tilde \mu} \ell(t,\psi_t^\mu) \dd t 
\right] - {\mathcal S}(q^{\tilde \mu})
\\
&\hspace{15pt} + c 
{\mathbb E} \left[ \int_0^T
q_t^{\tilde \mu} \left( \vert Y_t^{\star,\tilde \mu} - Y_t^{\star,\mu} \vert^2 
+ \vert Z_t^{\star,\tilde \mu} - Z_t^{\star,\mu} \vert^2 \right) \dd t \right].  
\end{split}
\end{equation*}
And then, 
by strong convexity of $\ell$ (see \ref{hyp:L}), we obtain (for a new value of $c$), 
\begin{equation*} 
\begin{split}
&{\mathbb E} \left[ q_T^\mu g(X_T^\mu,\mu) 
+ \int_0^T q_t^\mu \ell(t,\psi_t^\mu) \dd t 
\right] - {\mathcal S}(q^\mu)
\\
&\geq
{\mathbb E} \left[ q_T^{\tilde \mu} g(X_T^\mu,\mu) 
+ \int_0^T q_t^{\tilde \mu} \ell(t,\psi_t^{\tilde \mu}) \dd t 
+ 
\int_0^t q_t^{\tilde \mu} \nabla_\psi \ell(t,\psi_t^{\tilde \mu})
\cdot \left( \psi_t^{\mu} - \psi_t^{\tilde \mu} \right) 
\dd t
\right] - {\mathcal S}(q^{\tilde \mu})
\\
&\hspace{15pt} + c 
{\mathbb E} \left[ \int_0^T
q_t^{\tilde \mu} \left( 
\vert \psi_t^{\tilde \mu} - \psi_t^{\mu} 
\vert^2 + 
\vert Y_t^{\star,\tilde \mu} - Y_t^{\star,\mu} \vert^2 
+ \vert Z_t^{\star,\tilde \mu} - Z_t^{\star,\mu} \vert^2 \right) \dd t \right].  
\end{split}
\end{equation*}
By Lemma \ref{lemma:sufficient}, 
\begin{equation}
\label{corol:9:step1:end} 
\begin{split}
&{\mathbb E} \left[ q_T^\mu g(X_T^\mu,\mu) 
+ \int_0^T q_t^\mu \ell(t,\psi_t^\mu) \dd t 
\right] - {\mathcal S}(q^\mu)
\\
&\geq
{\mathbb E} \left[ q_T^{\tilde \mu} g(X_T^\mu,\mu) 
- q_T^{\tilde \mu} \nabla_x g(X_T^{\tilde \mu},\tilde \mu) 
\cdot (X_T^{\mu} - X_T^{\tilde \mu})
+ \int_0^T q_t^{\tilde \mu} \ell(t,\psi_t^{\tilde \mu}) \dd t 
\right] - {\mathcal S}(q^{\tilde \mu})
\\
&\hspace{15pt} + c 
{\mathbb E} \left[ \int_0^T
q_t^{\tilde \mu} \left(\vert \psi_t^{\tilde \mu} - \psi_t^{\mu} 
\vert^2 + \vert Y_t^{\star,\tilde \mu} - Y_t^{\star,\mu} \vert^2 
+ \vert Z_t^{\star,\tilde \mu} - Z_t^{\star,\mu} \vert^2 \right) \dd t \right].
\end{split}
\end{equation}

\noindent 
\textit{Step 2.} 
We derive the first claim. 
By convexity of $g$ in the first variable, 
we get the following bound for the first term on the second line of 
\eqref{corol:9:step1:end}
\begin{equation*}
g(X_T^\mu,\tilde \mu) \geq 
g(X_T^{\tilde \mu},\tilde \mu)
+ 
 \nabla_x g(X_T^{\tilde \mu},\tilde \mu) \cdot (X_T^\mu - X_T^{\tilde \mu}),
\end{equation*}
from which we deduce that 
\begin{equation*} 
\begin{split}
&{\mathbb E} \left[ q_T^\mu g(X_T^\mu,\mu) 
+ \int_0^T q_t^\mu \ell(t,\psi_t^\mu) \dd t 
\right] - {\mathcal S}(q^\mu)
\\
&\geq
{\mathbb E} \left[ q_T^{\tilde \mu} \left( g(X_T^\mu,\mu) 
-
g(X_T^\mu,\tilde \mu)
+
g(X_T^{\tilde \mu},\tilde \mu)
 \right) 
+ \int_0^T q_t^{\tilde \mu} \ell(t,\psi_t^{\tilde \mu}) \dd t 
\right] - {\mathcal S}(q^{\tilde \mu})
\\
&\hspace{15pt} + c 
{\mathbb E} \left[ \int_0^T
q_t^{\tilde \mu} \left(\vert \psi_t^{\tilde \mu} - \psi_t^{\mu} 
\vert^2 + \vert Y_t^{\star,\tilde \mu} - Y_t^{\star,\mu} \vert^2 
+ \vert Z_t^{\star,\tilde \mu} - Z_t^{\star,\mu} \vert^2 \right) \dd t \right].
\end{split}
\end{equation*}
By exchanging the roles of $\mu$ and $\mu'$ and by adding the resulting two inequalities, we obtain 
\begin{equation*} 
\begin{split}
0
&\geq
{\mathbb E} \left[ q_T^{\tilde \mu} \left( g(X_T^\mu,\mu) 
-
g(X_T^\mu,\tilde \mu) \right) 
 + q_T^{ \mu} \left( g(X_T^{\tilde \mu},\tilde \mu) 
-
g(X_T^{\tilde \mu},\mu) \right) 
\right]  
\\
&\hspace{15pt} + c 
{\mathbb E} \left[ \int_0^T
\left( q_t^{ \mu} 
+q_t^{\tilde \mu} 
\right) 
\left(\vert \psi_t^{\tilde \mu} - \psi_t^{\mu} 
\vert^2 + \vert Y_t^{\star,\tilde \mu} - Y_t^{\star,\mu} \vert^2 
+ \vert Z_t^{\star,\tilde \mu} - Z_t^{\star,\mu} \vert^2 \right) \dd t \right],
\end{split}
\end{equation*}
which completes the proof of \eqref{eq:corol:9:first:claim}. 
\vskip 4pt

\noindent \textit{Step 3}. 
We now derive the second claim. 
We come back to 
\eqref{corol:9:step1:end}. Exchanging the roles of 
$\mu$ and $\tilde \mu$ therein, and then summing the resulting two inequalities, 
we get 
\begin{equation*}
\begin{split}
&{\mathbb E} \left[ \left( q_T^\mu 
- q_T^{\tilde \mu} \right) \left( g(X_T^\mu,\mu) - g(X_T^{\tilde \mu},\tilde \mu) \right)
\right]
\\
&\geq
{\mathbb E} \left[  \left( q_T^{\mu} \nabla_x g(X_T^{\mu}, \mu) 
- q_T^{\tilde \mu} \nabla_x g(X_T^{\tilde \mu},\tilde \mu) \right) 
\cdot (X_T^{\mu} - X_T^{\tilde \mu})
\right]
\\
&\hspace{15pt} + c 
{\mathbb E} \left[ \int_0^T
\left( q_t^{\mu} + q_t^{\tilde \mu}\right) \left(\vert \psi_t^{\tilde \mu} - \psi_t^{\mu} 
\vert^2 + \vert Y_t^{\star,\tilde \mu} - Y_t^{\star,\mu} \vert^2 
+ \vert Z_t^{\star,\tilde \mu} - Z_t^{\star,\mu} \vert^2 \right) \dd t \right].
\end{split}
\end{equation*}
This completes the proof. 
\end{proof}

\section{Mean-field games}
\label{se:mfg}

This section is devoted to the study of the mean-field game problem \eqref{pb:mfg}. In Subsection \ref{subse:mfg:def}, we define the notion of a mean-field game equilibrium and introduce the topology underlying the existence result, together with additional assumptions on the interaction mapping $g$. In Subsection \ref{subse:4.2}, we derive uniform estimates on $(q^\mu,\psi^\mu)$ with respect to $\mu$, which are required to apply Schauder’s fixed point theorem. Subsection \ref{subse:mfg:existence} is devoted to our main existence result, stated in Theorem \ref{thm:existence}. Finally, in Subsection \ref{subse:4.4}, we establish a uniqueness result under a joint flat non-increasing and displacement non-decreasing condition on the mapping $g$, as defined in Definition \ref{def:flat:displacement}; see Proposition \ref{prop:uniqueness}.

\subsection{Definition of an equilibrium}
\label{subse:mfg:def}

\begin{definition}
\label{def:mfg:equilibrium}
For $r$ as in 
\eqref{hyp:sigma}, we say that 
$\mu \in {\mathcal M}_{2-r}({\mathbb R}^n)$ is an equilibrium to the robust mean-field 
game set over 
\eqref{pb:optim-mfg}
if the unique saddle point 
$(q^\mu,\psi^\mu)$ of 
\eqref{pb:optim-mfg} satisfies 
\begin{equation*}
\mu = (q_T^\mu {\mathbb P})_{X_T^\mu}.
\end{equation*} 
\end{definition}

\paragraph{Topology.}
Below, we study existence and uniqueness separately. 
For this, 
we equip the space of non-negative measures with the narrow  topology, 
a sequence $(\mu_k)_{k \geq 1}$ in ${\mathcal M}({\mathbb R}^k)$ converging 
narrowly to some $\mu$ in ${\mathcal M}({\mathbb R}^n)$ if, for any bounded and continuous function 
$f$ on ${\mathbb R}^n$, it holds
\begin{equation*}
\lim_{k \rightarrow + \infty} \int_{{\mathbb R}^n} f(x) \dd \mu_k(x) 
= \int_{{\mathbb R}^n} f(x) \dd  \mu(x). 
\end{equation*} 
In fact, we are only interested in elements $\mu \in {\mathcal M}({\mathbb R}^n)$ whose mass
$\mu({\mathbb R}^n)$ is less than 
$\exp(\alpha T)$. The reason is that, for any $\mu \in {\mathcal M}({\mathbb R}^n)$, 
${\mathbb E}[q_T^\mu] \leq \exp(\alpha T)$. In this regard, it is important to remember that 
Prokhorov's theorem extends easily to non-negative
 measures with a mass less than a fixed constant:
 
 \begin{lemma}
 \label{lem:prokhorov}
 Let ${\mathcal C}$ be a subset of ${\mathcal M}({\mathbb R}^n)$ such that 
 \begin{equation*} 
 \sup_{\mu \in {\mathcal C}} \mu({\mathbb R}^n) < + \infty.
 \end{equation*} 
 Then, ${\mathcal C}$ is relatively compact for the narrow topology if 
 it is tight, i.e., for any $\varepsilon >0$, there exists a compact subset 
 $K \subset {\mathbb R}^n$ such that 
 \begin{equation*}
 \sup_{\mu \in {\mathcal C}} \mu({\mathbb R}^n \setminus K) \leq \varepsilon. 
 \end{equation*}
 \end{lemma}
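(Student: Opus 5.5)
The plan is to reduce the statement to the classical Prokhorov theorem for probability measures, by normalizing masses and treating separately the possible degeneration of the mass to zero. Set $M:=\sup_{\mu \in {\mathcal C}} \mu({\mathbb R}^n) < +\infty$. Let $(\mu_k)_{k \geq 1}$ be a sequence in ${\mathcal C}$; it suffices to extract a narrowly convergent subsequence. The masses $m_k := \mu_k({\mathbb R}^n)$ lie in $[0,M]$, so by Bolzano--Weierstrass we may extract a subsequence (not relabelled) along which $m_k \rightarrow m \in [0,M]$.

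We then split into two cases. If $m=0$, then for any bounded continuous $f$ we have $\vert \int f \dd \mu_k \vert \leq \|f\|_\infty m_k \rightarrow 0$, so $\mu_k$ converges narrowly to the null measure, which belongs to ${\mathcal M}({\mathbb R}^n)$. If $m>0$, we discard the finitely many indices with $m_k=0$ and define the probability measures $\nu_k := \mu_k / m_k$. For $\varepsilon>0$, tightness of ${\mathcal C}$ gives a compact $K$ with $\mu_k({\mathbb R}^n\setminus K) \leq \varepsilon m/2$ for all $k$. Since $m_k \geq m/2$ for $k$ large, this yields $\nu_k({\mathbb R}^n \setminus K) \leq \varepsilon$ for all large $k$, and the remaining finitely many $\nu_k$ are individually tight (each is a finite measure on the Polish space ${\mathbb R}^n$), so enlarging $K$ makes the whole family $(\nu_k)_k$ tight.

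By the classical Prokhorov theorem, a further subsequence satisfies $\nu_k \rightarrow \nu$ narrowly for some $\nu \in {\mathcal P}({\mathbb R}^n)$. Then, for bounded continuous $f$,
\begin{equation*}
\int f \dd \mu_k = m_k \int f \dd \nu_k \longrightarrow m \int f \dd \nu,
\end{equation*}
so $\mu_k \rightarrow m\nu \in {\mathcal M}({\mathbb R}^n)$ narrowly. This gives sequential relative compactness. To pass to relative compactness in the topological sense, we note that on the set of measures with mass at most $M$ the narrow topology is metrizable, for instance by the bounded-Lipschitz distance $\sup\{\int f \dd(\mu-\nu) : \|f\|_\infty + \mathrm{Lip}(f) \leq 1\}$. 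This is a standard fact, extended from probability measures in the same way, so sequential relative compactness coincides with relative compactness.

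The only delicate points are the vanishing-mass case, handled separately above, and the metrizability remark. Neither is a genuine obstacle.
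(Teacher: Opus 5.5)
Your argument is correct: normalizing by the masses, treating the vanishing-mass case separately, applying the classical Prokhorov theorem, and then using metrizability of the narrow topology on bounded-mass sets (which the paper itself relies on through the Fortet--Mourier norm) is the routine extension the paper has in mind. The paper gives no proof beyond stating that Prokhorov's theorem ``extends easily'' to non-negative measures of bounded mass, so your write-up fills in that standard reduction.
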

 In what follows (see the forthcoming condition \ref{assumption:g:reg:flat}), we require the function $g$ to be continuous in $\mu$ with respect to 
 the narrow topology, but only on bounded subsets of ${\mathcal M}_{2-r}({\mathbb R}^n)$, i.e., 
 on subsets 
of the form
 \begin{equation*} 
{\mathcal B}_{{\mathcal M}_{2-r}}(C) :=
\left\{ \mu \in {\mathcal M}_{2-r}({\mathbb R}^n) : 
 \int_{{\mathbb R}^n} \left( 1 +\vert x \vert^{2-r}\right)  \dd \mu(x)  \leq C\right\}. 
 \end{equation*} 
 This notion is motivated by the following standard lemma:
 \begin{lemma}
 \label{lem:reg:narrow}
 Let $h : {\mathbb R}^n \rightarrow {\mathbb R}$ be a continuous function such that, for 
 some $c >0$ and $\eta \in (0,2-r)$, 
 $\vert h(x) \vert \leq c (1+ \vert x \vert^{2-r-\eta})$. Then, for any $C>0$,
  the function 
 \begin{equation*} 
 {\mathcal B}_{{\mathcal M}_{2-r}(C)} \ni \mu \mapsto \int_{{\mathbb R}^n} 
 h(x) \dd \mu(x)
 \end{equation*} 
 is continuous for the narrow topology. 
 \end{lemma}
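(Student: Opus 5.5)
The plan is a truncation argument combined with uniform integrability, the standard route for narrow convergence with moment bounds. Fix $C>0$ and a sequence $(\mu_k)_{k\ge1}$ in ${\mathcal B}_{{\mathcal M}_{2-r}}(C)$ converging narrowly to some $\mu \in {\mathcal B}_{{\mathcal M}_{2-r}}(C)$. It suffices to show that $\int h \,\dd\mu_k \to \int h\,\dd\mu$. Note that the limit automatically satisfies the moment bound: $x\mapsto \min(1+|x|^{2-r},R)$ is bounded and continuous, and letting $R\to\infty$ uses monotone convergence. So the restriction to the ball is harmless.

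First, truncate $h$. For $R>0$, let $\chi_R:{\mathbb R}^n\to[0,1]$ be continuous, equal to $1$ on the ball of radius $R$ and vanishing outside the ball of radius $R+1$. Split $h = h\chi_R + h(1-\chi_R)$. The function $h\chi_R$ is bounded and continuous, so narrow convergence gives $\int h\chi_R\,\dd\mu_k \to \int h\chi_R\,\dd\mu$ for each fixed $R$.

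Second, control the tail uniformly. For any $\nu \in {\mathcal B}_{{\mathcal M}_{2-r}}(C)$, we have $|h(x)|(1-\chi_R(x)) \le c(1+|x|^{2-r-\eta})\mathbf{1}_{\{|x|\ge R\}}$. For $|x|\ge R\ge1$, this is bounded by $2c|x|^{2-r-\eta} \le 2c R^{-\eta}|x|^{2-r}$. Hence
\[
\Bigl|\int h(1-\chi_R)\,\dd\nu\Bigr| \le 2c R^{-\eta}\int |x|^{2-r}\,\dd\nu(x) \le 2cC R^{-\eta},
\]
uniformly over the ball. This step relies on the strict gap $\eta>0$, which converts the bounded $(2-r)$-moment into uniform integrability of $h$. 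This is the only genuinely non-routine point: without $\eta>0$ the statement fails, since mass escaping to infinity could carry a nonvanishing amount of $|x|^{2-r}$.

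Finally, combine the two estimates:
\[
\limsup_k \Bigl|\int h\,\dd\mu_k - \int h\,\dd\mu\Bigr| \le 4cCR^{-\eta}.
\]
Letting $R\to\infty$ concludes. Because the narrow topology on measures of bounded mass is metrizable (for instance by a bounded-Lipschitz metric, which applies here since total masses are bounded by $C$), sequential continuity suffices. One can also run the same $\varepsilon/3$ argument directly with nets, since the tail bound is uniform.
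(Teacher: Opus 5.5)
Your argument is correct and is exactly the standard truncation-plus-uniform-integrability proof the paper has in mind; the paper calls this lemma standard and gives no proof of its own. The uniform tail bound $2cCR^{-\eta}$ over ${\mathcal B}_{{\mathcal M}_{2-r}}(C)$ is precisely where $\eta>0$ enters, which matches the paper's remark that the claim fails for $\eta=0$.
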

As it is well-known, the result becomes false when 
$\eta=0$. In this case, continuity just holds but on subsets 
of 
${\mathcal M}_{2-r}({\mathbb R}^n)$ that are uniformly integrable. 
In our framework, we are not able to prove that, in full generality, the collection of measures 
$((q_T^\mu {\mathbb P})_{X_T^\mu})_{\mu \in {\mathcal M}_{2-r}({\mathbb R}^n)}$
is uniformly integrable, which explains why continuity with respect to the narrow topology is required on larger subsets (and thus leads to less general examples, as clearly illustrated by the above lemma).  
 
We thus require further regularity properties on the cost function 
$g$ with respect to the measure argument: 

\paragraph{Assumptions}\textit{(continued)}
\begin{enumerate}[label*=A\arabic*,resume]
\item \label{assumption:g:growth} 
For the same $L$ as in \ref{hyp:b}-\ref{hyp:g:mu:fixed}, 
condition 
\eqref{hyp:G-growth-JC}
holds true with $L_C=L$, for any $\mu \in {\mathcal M}_{2-r}({\mathbb R}^n)$ such that 
$\mu({\mathbb R}^n) \leq \exp(\alpha T)$. 

\item \label{assumption:g:reg:flat} 
For any $C>0$
and
for any sequence 
$(\mu_\ell)_{\ell \geq 1}$ in ${\mathcal B}_{{\mathcal M}_{2-r}}(C)$ that converges 
narrowly 
to some 
$\mu$, it holds
    \begin{equation} \label{ass:unif-flat-dG-mean-field}
   \lim_{\ell \rightarrow \infty}     \sup_{x \in \mathbb{R}^n}\left[ \frac1{1+\vert x \vert^{2-r}} \left|g(x,\mu_\ell)  -  g(x,\mu) \right|\right] =0.
    \end{equation}    \end{enumerate}

\begin{remark} 
\label{re:10:10}
The following comments are in order. 
\begin{itemize}

\item  Thanks to Lemma \ref{lem:prokhorov}, 
it is plain to see that, for a given $C>0$, the \textit{ball} 
$${\mathcal B}_{{\mathcal M}_{2-r}}(C) := \left\{\mu \in {\mathcal M}_{2-r}({\mathbb R}^n) : \int_{{\mathbb R}^n} 
\left( 1 + 
\vert x \vert \right) \dd \mu(x) \leq C\right\}$$
is 
relatively compact for the narrow topology. In fact, it is also closed and hence compact. 
In particular, the measure $\mu$ in 
\ref{assumption:g:reg:flat}
is necessarily in ${\mathcal B}_{{\mathcal M}_{2-r}(C)}$.

\item Following the above item, we notice that any real-valued function on ${\mathcal M}({\mathbb R}^n)$ that is continuous
on ${\mathcal B}_{{\mathcal M}_{2-r}}(C)$ 
with respect to the narrow topology, for some $C>0$, is in fact uniformly continuous. 
In particular, 
for each $x \in {\mathbb R}^n$, the function $\mu \mapsto g(x,\mu)$ is, under condition 
\ref{assumption:g:reg:flat}, 
uniformly continuous on 
${\mathcal B}_{{\mathcal M}_{2-r}}(C)$. 
Somehow, condition \ref{assumption:g:reg:flat} imposes an additional constraint on the modulus of continuity, but uniformly in $x$.

\item 
Following Lemma 
\ref{lem:reg:narrow},
a standard example of a function $g$ that satisfies all the requirements 
\ref{hyp:g:mu:fixed}-\ref{assumption:g:reg:flat} is 
\begin{equation*}
g(x,\mu) := \int_{{\mathbb R}^n} \Gamma(x,y) \dd \mu(y), 
\end{equation*}
where $\Gamma$ is convex in the variable $x$ and satisfies (all the derivatives below being implicitly assumed to exist), 
\begin{equation*} 
\begin{array}{rl}
- L \left( 1 + \vert x \vert^{1-r}\right) 
\leq
 \Gamma(x,y)  &\leq  L \left( 1 + \vert x \vert^{2-r} \right), 
\\[.5em]
 \vert \nabla_x \Gamma(x,y) \vert &\leq  L \left( 1 + \vert x \vert^{1-r} \right), 
\\[.5em]
 \vert \nabla_{y} \Gamma(x,y) \vert &\leq  L \left( 1 + \vert x \vert^{2-r}\right), 
\\[.5em]
 \vert \nabla^2_{x,x} \Gamma(x,y) \vert &\leq  L.
\end{array}
\end{equation*}  
The proof 
of 
\eqref{ass:unif-flat-dG-mean-field}
is as follows (the other conditions in \ref{hyp:g:mu:fixed}-\ref{assumption:g:reg:flat} are easily checked). Let $C>0$ and $\varepsilon >0$.  
By the first line above (together with the first item in the remark), we can find a compact subset $K \subset {\mathbb R}^n$ such that, for 
any $\mu \in {\mathcal B}_{{\mathcal M}_{2-r}}(C)$, 
\begin{equation*} 
\sup_{x \in {\mathbb R}^n} \left[ \frac1{1+\vert x\vert^{2-r}}
\left\vert 
\int_{{\mathbb R}^n} 
\Gamma(x,y) \dd \mu(y) - 
\int_{K} 
\Gamma(x,y) \dd \mu(y)
\right\vert \right] \leq \varepsilon. 
\end{equation*} 
By the penultimate point, the functions $(y \mapsto \Gamma(x,y)/(1+\vert x \vert^{2-r}))_{x \in {\mathbb R}^n}$ are equicontinuous on $K$. Therefore, we can approximate any of them, to any fixed accuracy for the sup norm on $K$, 
by a continuous function in a finite collection. The proof is then easily completed. 

\item 
Similar to \cite{DelarueLavigne}, the presentation is restricted to games in which only the terminal cost
has a mean-field structure. That said, we could also consider mean-field running cost with a separated form
    \begin{equation*}
        \ell'(t,\psi_t,\mu_t) = \ell(t,\psi_t) + c(X_t,\mu_t),
    \end{equation*}
    where $\mu_t$ is the marginal law of $X_t$ under the probability measure $q_T \mathbb{P}$ .

\end{itemize}
\end{remark}

In the rest of this subsection, Assumptions 
\ref{hyp:g:mu:fixed}-\ref{assumption:g:reg:flat} 
are in force.

\subsection{Entropy and moment estimates}
\label{subse:4.2}

In this subsection, we provide a series of bounds that are satisfied for any 
$\mu \in {\mathcal M}_{2-r}({\mathbb R}^n)$.

We start with the following lemma:
\begin{lemma}
\label{lem:apriori:S}
There exists a constant $C_1$, only depending on  the parameters in 
the standing assumptions, such that 
\begin{equation*}
\sup_{\mu \in {\mathcal M}_{2-r}({\mathbb R}^n)}
{\mathcal S}(q^\mu) \leq C_1. 
\end{equation*}
In particular, (up to a possibly new value of $C_1$)
\begin{equation*} 
\sup_{\mu \in {\mathcal M}_{2-r}({\mathbb R}^n)}
\sup_{t \in [0,T]}
{\mathbb E}[h(q_t^\mu)] \leq C_1. 
\end{equation*} 
\end{lemma}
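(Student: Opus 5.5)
The bound will come from the saddle-point property of $(q^\mu,\psi^\mu)$, by testing it against a trivial control of the representative player and a trivial state of Nature. The aim is a two-sided bound on the value $V(\mu):=\mathcal{J}[\mu](\psi^\mu,q^\mu)$ that does not depend on $\mu$, and then to read off $\mathcal{S}(q^\mu)$.

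\emph{Upper bound on the value.} Take $\psi\equiv 0$; its admissibility follows from $\mathcal{S}^\star(0)\le 0$ if $\inf f^\star\ge -\|f^0\|_\infty$, which holds since $f^\star(t,y^\star,z^\star)\ge -f(t,0,0)$. Then
$$V(\mu)\le \sup_{q}\mathcal{J}[\mu](0,q)\le \sup_q \mathbb{E}\Bigl[q_T L(1+|X^0_T|^{2-r})+\int_0^T q_s L\,\dd s\Bigr]-\mathcal{S}(q).$$
Here $L_C=L$ holds uniformly, by \ref{assumption:g:growth}: the relevant $\mu$ have mass at most $e^{\alpha T}$, and one may restrict to such $\mu$ or note that the growth constants are uniform. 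Lemma \ref{lemma:reg-X-psi-A} controls the $q$-weighted moment of $X^0$ by $C(1+\mathcal{S}(q))$. This gives a linear bound in $\mathcal{S}(q)$, and linear is not good enough. So instead of Lemma \ref{lemma:reg-X-psi-A} I would use the Donsker--Varadhan-type duality \eqref{ineq:S-S-star}, or equivalently the entropy inequality \eqref{eq:H:S} combined with an exponential moment of $|X^0_T|^{2-r}$. When $r=1$ this moment is trivially finite, since $X^0$ is Gaussian-like with bounded coefficients. When $r=0$, finiteness of $\mathbb{E}[e^{\lambda |X^0_T|^2}]$ for the needed $\lambda$ is exactly what the smallness condition in \ref{hyp:gamma} should provide. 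Splitting $\mathbb{E}[q_T G]\le \varepsilon^{-1}\bigl(\mathbb{E}[\mathrm{Ent}(q_T)]+\ldots\bigr)+\varepsilon^{-1}\ln\mathbb{E}[e^{\varepsilon G}]$, with $\varepsilon$ chosen so that $C\varepsilon^{-1}<1$ in front of $\mathcal{S}(q)$, yields $\sup_q\mathcal{J}[\mu](0,q)\le C$ uniformly in $\mu$.

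\emph{Lower bound.} Take Nature's trivial choice $q\equiv 1$, that is $Y^\star=Z^\star=0$, with $\mathcal{S}(1)=\mathbb{E}\int f^\star(s,0,0)\,\dd s\le T\|f^0\|_\infty$. Then
$$V(\mu)\ge \inf_\psi \mathcal{J}[\mu](\psi,1)=\inf_\psi\mathbb{E}\Bigl[g(X^\psi_T,\mu)+\int_0^T\ell(s,\psi_s)\,\dd s\Bigr]-\mathcal{S}(1).$$
Since $g\ge -L(1+|x|)$ and $\ell(s,\psi)\ge -L+|\psi|^2/(2L)-C$ by strong convexity, while $\mathbb{E}|X^\psi_T|\le C(1+\mathbb{E}\int|\psi|)$, the infimum is at least $-C$.

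\emph{Conclusion.} Now evaluate the value at the saddle point, keeping $-\mathcal{S}(q^\mu)$ explicit. From
$$\mathcal{S}(q^\mu)=\mathbb{E}\Bigl[q^\mu_T g+\int_0^T q^\mu\ell\Bigr]-V(\mu),$$
bound the first expectation as in the upper bound, using the duality \eqref{ineq:S-S-star} with $\mathcal{S}^\star(\psi^\mu)$. The cleanest route is to apply the same $\varepsilon$-splitting directly: $V(\mu)\le \mathcal{J}[\mu](0,q^\mu)$ bounds $\mathbb{E}[q^\mu_T g(X^0_T)]-\mathcal{S}(q^\mu)$. Combining with the lower bound gives
$$-C\le \mathbb{E}[q^\mu_T G]-\mathcal{S}(q^\mu)\le C+\tfrac12\mathcal{S}(q^\mu)-\mathcal{S}(q^\mu),$$
hence $\mathcal{S}(q^\mu)\le C_1$.

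The second claim follows from \eqref{eq:H:S}, reading $h$ as $\mathrm{Ent}$, at time $T$. For intermediate times $t$, note that $q^\mu_t e^{-\int_0^t Y^\star}$ is a martingale and $|Y^\star|\le\alpha$, so $\mathrm{Ent}$ of $q_t$ is controlled by conditional Jensen applied to the convex function $\mathrm{Ent}$, up to constants. Alternatively, apply \eqref{eq:H:S} to the process stopped at $t$, whose generalized entropy is bounded by $\mathcal{S}(q^\mu)+C$.

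\emph{Main obstacle.} The delicate step is the uniform exponential-moment bound for $|X^0_T|^{2-r}$ in the case $r=0$, which is needed to absorb $\mathbb{E}[q_T g]$ into a fraction of $\mathcal{S}(q)$. This is where the constraint in \ref{hyp:gamma} must be used. I would try to obtain it by invoking \eqref{ineq:S-S-star}, or the corresponding estimate from \cite{DelarueLavigne}, rather than redoing it.
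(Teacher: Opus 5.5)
Your architecture is the one the paper imports from \cite[Lemma 26]{DelarueLavigne}. The lower bound on the saddle value comes from freezing Nature at a reference state and using only the linear lower growth $g(x,\mu)\ge -L(1+|x|)$, which is uniform in $\mu$ by \ref{assumption:g:growth}. The upper bound comes from the null control, absorbing $\mathbb{E}[q^\mu_T g(X^0_T,\mu)]$ into a fraction of $\mathcal{S}(q^\mu)$ via $xy\le \textrm{\rm Ent}(x)+e^{y}$, \eqref{eq:H:S} and an exponential moment of $X^0_T$, which does not depend on $\mu$. That upper half is sound. For $r=0$, the constant in \eqref{eq:H:S} is of order $\beta e^{\alpha T}$, since $f^\star(t,y^\star,z^\star)\ge |z^\star|^2/(2\beta)-|f^0_t|$ for $|y^\star|\le\alpha$; beating it with a Gaussian moment of $X^0_T$ is what the smallness condition in \ref{hyp:gamma} is there for. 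The genuine problem is in your lower bound, which is exactly the point the paper's proof singles out.

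You test the saddle point against $q\equiv 1$ and claim $\mathcal{S}(1)=\mathbb{E}\int_0^T f^\star(s,0,0)\,\dd s\le T\|f^0\|_{L^\infty(\mathbb{F})}$. Fenchel's inequality gives the opposite direction, $f^\star(t,0,0)=-\inf_{y,z}f(t,y,z)\ge -f^0_t$, and \ref{eq:assumption1-f} only bounds $f$ from above. For instance, $f(t,y,z)=\alpha y+\tfrac{\beta}{2}|z|^2$ satisfies \ref{eq:assumption1-f}, but $f^\star(t,y^\star,z^\star)=+\infty$ unless $y^\star=\alpha$. Then $\mathcal{S}(1)=+\infty$, $q\equiv 1\notin\mathcal{Q}$, and your lower bound on $V(\mu)$ is lost; your choice only works when $f$ is bounded below, e.g. $f=\tfrac12|z|^2$.

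The repair is to use an admissible reference state, which is the role of the paper's $q^0$. Take $(Y^{\star,0}_t,Z^{\star,0}_t)=(\partial_y f(t,0,0),\nabla_z f(t,0,0))$. Convexity combined with the upper growth bound gives $|\partial_y f(t,0,0)|\le\alpha$ and $|\nabla_z f(t,0,0)|\le 2(\beta\|f^0\|_{L^\infty(\mathbb{F})})^{1/2}$. The Fenchel equality gives $f^\star(t,Y^{\star,0}_t,Z^{\star,0}_t)=-f^0_t$, hence $\mathcal{S}(q^0)\le Te^{\alpha T}\|f^0\|_{L^\infty(\mathbb{F})}$. Then $V(\mu)\ge\mathcal{J}[\mu](\psi^\mu,q^0)$. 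The paper's bound $\mathbb{E}[q^0_T g(X^\psi_T,\mu)]\ge -L(1+\mathbb{E}[q^0_T|X^\psi_T|])$, together with Girsanov for the bounded kernel $Z^{\star,0}$, gives $\mathbb{E}[q^0_T|X^\psi_T|]\le C(1+\mathbb{E}[\int_0^T q^0_s|\psi_s|^2\,\dd s]^{1/2})$, which the strong convexity of $\ell$ absorbs. You need the $L^2$ norm of $\psi$ here, not your $\mathbb{E}\int_0^T|\psi_s|\,\dd s$, because of the term $\sigma(t,\psi_t)\,\dd W_t$ when $r=1$.

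The same defect affects your alternative argument for the second claim. The process frozen after time $t$ carries the cost $\mathbb{E}\int_t^T q_t f^\star(s,0,0)\,\dd s$, which may be infinite, so use the conditional Jensen argument instead.
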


\begin{proof}
The result is a direct consequence of \cite[Lemma 26]{DelarueLavigne}. 
The main point is to observe that the quantity 
${\mathcal G}(q^0,X_T^\psi)$ appearing in the first step of the proof is 
here equal to 
${\mathbb E}[q^0_T g(X_T^\psi,\mu)]$. By convexity of 
$g$ in the variable $x$ and then by 
condition \ref{assumption:g:growth},
it is greater than 
\begin{equation*}
\begin{split} 
{\mathbb E}[q^0_T g(X_T^\psi,\mu)]
&\geq {\mathbb E}[q^0_T g(0,\mu)]
+ {\mathbb E}[q^0_T \nabla_x g(0,\mu) \cdot X_T^\psi] 
\\
&\geq - L \left( 1 +{\mathbb E}[q_T^0 \vert X_T^\psi \vert] \right).
\end{split}
\end{equation*}
The key fact is that the constant $L$ is here independent of $\mu$. Inserting this bound in 
\cite[(131)]{DelarueLavigne}, we get a constant 
$C_1$ in 
\cite[(134)]{DelarueLavigne} that is independent 
of $\mu$. Following the rest of the proof in 
\cite{DelarueLavigne}, we deduce that 
$C_1$
in the statement can be chosen independently of $\mu$.
\end{proof}

\begin{lemma}
\label{lem:apriori:SStar}
There exists a constant $C_2$, only depending on  the parameters in 
the standing assumptions, such that 
\begin{equation*}
\sup_{\mu \in {\mathcal M}_{2-r}({\mathbb R}^n)}
{\mathcal S}^\star(\psi^\mu) \leq C_2. 
\end{equation*}
\end{lemma}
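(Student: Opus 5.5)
The bound will come from the same route as Lemma \ref{lem:apriori:S}: reduce to the companion paper's a priori estimate on ${\mathcal S}^\star$ of the optimal control, and check that every constant in it depends on $\mu$ only through quantities that \ref{assumption:g:growth} makes uniform. In \cite{DelarueLavigne}, the bound on ${\mathcal S}^\star(\psi)$ at the saddle point rests on three ingredients. The first is the optimality condition in \eqref{optim:condition-primal}, namely $q_t \nabla_\psi \ell(t,\psi_t) = -( p_t \cdot c_t + r\,{\rm Tr}(\sigma_t^\top k_t))$. Combined with the strong convexity in \ref{hyp:L}, it expresses $|\psi_t|$ in terms of $|p_t|/q_t$ and $|k_t|/q_t$. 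The second is a representation of $p_t/q_t$ as a conditional expectation under the effective measure. Since $b$ is linear, $p_t = \Gamma_t^{-\top}\,\mathbb{E}[\Gamma_T^\top q_T \nabla_x g(X_T^\mu,\mu)\,|\,\mathcal{F}_t]$. The third is the duality inequality \eqref{ineq:S-S-star} together with the Donsker--Varadhan type bound in the definition \eqref{eq:expo:bound:psi}.

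The plan has four steps. First, fix $q\in\mathcal{Q}$ and bound $\mathbb{E}[\int_0^T q_s|\psi^\mu_s|^2\dd s] - \gamma\mathcal{S}(q)$ uniformly in $q$ and $\mu$. By the optimality condition and \ref{hyp:L},
\[
|\psi_s^\mu| \le C\bigl(1 + |p_s^\mu|/q_s^\mu + r|k_s^\mu|/q_s^\mu\bigr).
\]
Second, estimate $p^\mu/q^\mu$ using \ref{assumption:g:growth}, which gives $|\nabla_x g(x,\mu)|\le L(1+|x|^{1-r})$ with $L$ independent of $\mu$ (recall $\mu(\mathbb{R}^n)\le e^{\alpha T}$ for all relevant $\mu$).
\begin{itemize}
\item When $r=1$, $\nabla_x g$ is bounded by $L$. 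Hence $|p_t^\mu|/q_t^\mu$ is bounded by $C\,\mathbb{E}[q_T^\mu|\mathcal{F}_t]/q_t^\mu \le C e^{\alpha T}$, because $Y^\star$ is bounded by $\alpha$. The $k$ term is then handled through the BMO-type argument of the companion paper.
\item When $r=0$, $|p_t^\mu|/q_t^\mu \le C(1+\mathbb{E}^{\mathbb{Q}^\mu}[\,|X_T^\mu|\,|\mathcal{F}_t])$. Exponential integrability of this quantity under $q\mathbb{P}$ is exactly what the smallness condition in \ref{hyp:gamma} provides in \cite{DelarueLavigne}.
\end{itemize}
Third, use the entropy bound of Lemma \ref{lem:apriori:S}, $\mathcal{S}(q^\mu)\le C_1$, wherever the companion proof invokes $\mathcal{S}(q^\mu)$; by \eqref{eq:H:S}, this also controls $\mathbb{E}[\textrm{\rm Ent}(q^\mu_t)]$ uniformly. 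Fourth, observe that the companion proof otherwise uses only $L$, $\alpha$, $\beta$, $T$ and the bounds on $\Gamma$, $\nu$, $\sigma$.

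So the proof, as written, will be: ``By \cite{DelarueLavigne}, ${\mathcal S}^\star(\psi^\mu)\le C(1+{\mathcal S}(q^\mu))$ with $C$ depending on $\mu$ only through $L_C$ in \eqref{hyp:G-growth-JC}. By \ref{assumption:g:growth}, $L_C$ can be replaced by $L$, and the conclusion follows from Lemma \ref{lem:apriori:S}.''

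The main obstacle is confirming that the companion estimate really depends on $\mu$ only through the growth constant and $\mathcal{S}(q^\mu)$, and not, for example, through $M_{2-r}(\mu)$ or the value $g(0,\mu)$. For the lower bound on $g$, I would repeat the trick of Lemma \ref{lem:apriori:S}, namely convexity in $x$ followed by \ref{assumption:g:growth}. I would also track how the terminal condition enters the exponential estimate in the $r=0$ case, to make sure the constant $\gamma$ from \ref{hyp:gamma} absorbs it without any $\mu$-dependence.
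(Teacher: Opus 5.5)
Your proposal follows essentially the paper's route: invoke the companion estimate \cite[Lemma 32]{DelarueLavigne} and check that, thanks to \ref{assumption:g:growth}, every $\mu$-dependent input is bounded by $L$ uniformly in $\mu$. The paper identifies those inputs as $g(0,\mu)$ and $\nabla_x g(0,\mu)$, which are exactly what your convexity trick produces, plus the cost of the null control, which bounds the value from above. This points to a value-comparison argument rather than the adjoint/BMO mechanism you sketch in your first two steps, but your final reduction does not rely on that sketch.
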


\begin{proof}
The proof is an adaptation of \cite[Lemma 32]{DelarueLavigne}. 
The bound established therein
depends on 
$g$ through $g$ and $\nabla_x g$ at $x=0$, but the latter two are bounded independently 
of 
$\mu$, see 
\ref {assumption:g:growth}. 

One also needs a bound for the cost driven by the null control. 
Thanks again to 
\ref {assumption:g:growth}, it is independent of 
$\mu$. The conclusion easily follows. 
\end{proof}

\begin{lemma}
\label{lem:apriori:X}
There exists a constant $C_3$, only depending on  the parameters in 
the standing assumptions, such that 
\begin{equation*}
\sup_{\mu \in {\mathcal M}_{2-r}({\mathbb R}^n)}
{\mathbb E}\left[ q_T^{\mu} \vert X_T^\mu\vert^{2-r} \right]
 \leq C_3. 
\end{equation*}
\end{lemma}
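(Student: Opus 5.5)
The bound follows by combining Lemma \ref{lemma:reg-X-psi-A} with the two uniform a priori estimates just established. Fix $\mu \in {\mathcal M}_{2-r}({\mathbb R}^n)$. The saddle point satisfies $(q^\mu,\psi^\mu) \in {\mathcal Q}\times{\mathcal A}$, so Lemma \ref{lemma:reg-X-psi-A} applies with $(q,\psi)=(q^\mu,\psi^\mu)$. Since $X^\mu = X^{\psi^\mu}$ and $\vert X_T^\mu\vert^{2-r} \leq \sup_{t\in[0,T]} \vert X_t^{\psi^\mu}\vert^{2-r}$, it gives
\begin{equation*}
{\mathbb E}\left[ q_T^\mu \vert X_T^\mu \vert^{2-r}\right]
\leq C\left(1 + {\mathcal S}(q^\mu) + {\mathcal S}^\star(\psi^\mu)\right).
\end{equation*}
The key point is that $C$ is independent of $q$ and $\psi$.

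We then insert Lemma \ref{lem:apriori:S}, which gives ${\mathcal S}(q^\mu)\leq C_1$, and Lemma \ref{lem:apriori:SStar}, which gives ${\mathcal S}^\star(\psi^\mu)\leq C_2$. Both hold uniformly in $\mu$. This yields the claim with $C_3 := C(1+C_1+C_2)$.

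The only step needing care is whether the constant $C$ in Lemma \ref{lemma:reg-X-psi-A} depends on $\mu$. It should not. The state equation \eqref{eq:intro:X} does not involve $\mu$, so $X^\psi$ depends only on $\psi$. The estimate in \cite[Lemma 41]{DelarueLavigne} is obtained from the linear dynamics in \ref{hyp:b}--\ref{hyp:sigma}, through the representation $X_t = \Gamma_t(\eta + \int_0^t \Gamma_s^{-1}(\cdots))$ and the duality inequality \eqref{ineq:S-S-star}. So $C$ depends only on $L$, $T$, $\gamma$, $\|\eta\|_{L^\infty}$ and the bounds on $\Gamma$ and $\Gamma^{-1}$, not on $g$ or $\mu$.

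If one wanted to double-check this, one would bound ${\mathbb E}[q_T\sup_t\vert X_t\vert^{2-r}]$ directly. The stochastic-integral term would be handled under $q{\mathbb P}$ via a change of measure (Girsanov with drift $Z^\star$). The $Z^\star$-contribution and the $\vert\psi\vert^2$ terms would then be absorbed using \eqref{ineq:S-S-star} and \eqref{eq:H:S}. In all of this, $\mu$ never enters.
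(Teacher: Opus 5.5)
Your proposal is correct and matches the paper's proof, which likewise just combines Lemma \ref{lemma:reg-X-psi-A} (whose constant is independent of $q$, $\psi$ and, since the state equation does not involve $\mu$, of $\mu$) with the uniform bounds ${\mathcal S}(q^\mu)\leq C_1$ and ${\mathcal S}^\star(\psi^\mu)\leq C_2$ from Lemmas \ref{lem:apriori:S} and \ref{lem:apriori:SStar}. Your extra paragraph sketching how one might re-derive the constant is not needed for the argument.
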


\begin{proof} 
This is a consequence of 
Lemmas \ref{lem:apriori:S} and \ref{lem:apriori:SStar}
and 
\ref{lemma:reg-X-psi-A}.
\end{proof}

\subsection{Existence}
\label{subse:mfg:existence}

Here is the first main result of the article. 

\begin{theorem}
\label{thm:existence}
Let Assumptions \ref{hyp:b}-\ref{assumption:g:reg:flat}  be in force. 
Then, there exists at least one equilibrium to the mean-field game set over 
\eqref{pb:optim-mfg}, in the sense of 
Definition 
\ref{def:mfg:equilibrium}. 
\end{theorem}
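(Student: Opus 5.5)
We apply Schauder's fixed point theorem to the map
\begin{equation*}
\Phi : \mu \mapsto (q_T^\mu {\mathbb P})_{X_T^\mu},
\end{equation*}
defined on a convex, narrowly compact subset ${\mathcal K}$ of ${\mathcal M}({\mathbb R}^n)$. The space of finite signed measures with the narrow (weak) topology is a locally convex vector space, so Schauder--Tychonoff applies once ${\mathcal K}$ is convex and compact and $\Phi$ is continuous on ${\mathcal K}$ with $\Phi({\mathcal K})\subset{\mathcal K}$. We take
\begin{equation*}
{\mathcal K} := \Bigl\{ \mu \in {\mathcal M}({\mathbb R}^n) : \mu({\mathbb R}^n) \leq e^{\alpha T},\ \int_{{\mathbb R}^n} \vert x \vert^{2-r} \dd \mu(x) \leq C_3 \Bigr\},
\end{equation*}
with $C_3$ from Lemma~\ref{lem:apriori:X}. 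This set is convex and contained in ${\mathcal B}_{{\mathcal M}_{2-r}}(e^{\alpha T}+C_3)$. It is tight by Markov's inequality, hence relatively compact by Lemma~\ref{lem:prokhorov}, and it is closed by lower semicontinuity of $\mu \mapsto \int |x|^{2-r}\dd\mu$ and of the mass, so it is compact. Stability $\Phi({\mathcal K})\subset {\mathcal K}$ follows from ${\mathbb E}[q_T^\mu]\le e^{\alpha T}$ (since $\vert Y^\star\vert\le\alpha$ and the stochastic exponential is a supermartingale) and from Lemma~\ref{lem:apriori:X}. This bound is uniform in $\mu$, which is exactly why the a priori estimates of Subsection~\ref{subse:4.2} were stated uniformly. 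When $r=1$ the set ${\mathcal K}$ only has first moments, which is consistent with the continuity required in \ref{assumption:g:reg:flat}.

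The core step is continuity. Let $\mu_k \to \mu$ narrowly in ${\mathcal K}$. Apply \eqref{eq:corol:9:first:claim} with $\tilde\mu=\mu_k$. The left-hand side is bounded by
\begin{equation*}
\sup_x \frac{\vert g(x,\mu_k)-g(x,\mu)\vert}{1+\vert x\vert^{2-r}} \,\Bigl( {\mathbb E}\bigl[q_T^{\mu_k}(1+\vert X_T^\mu\vert^{2-r})\bigr] + {\mathbb E}\bigl[q_T^{\mu}(1+\vert X_T^{\mu_k}\vert^{2-r})\bigr] \Bigr).
\end{equation*}
The cross expectations ${\mathbb E}[q_T^{\mu_k}\vert X_T^\mu\vert^{2-r}]$ are bounded uniformly by Lemma~\ref{lemma:reg-X-psi-A} (applied to the pair $(q^{\mu_k},\psi^\mu)$) together with Lemmas~\ref{lem:apriori:S} and \ref{lem:apriori:SStar}. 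Hence, by \ref{assumption:g:reg:flat}, the left-hand side tends to $0$, and therefore
\begin{equation*}
{\mathbb E}\Bigl[\int_0^T q^\mu_t \bigl(\vert \psi^{\mu_k}_t-\psi^\mu_t\vert^2 + \vert Y^{\star,\mu_k}_t-Y^{\star,\mu}_t\vert^2+\vert Z^{\star,\mu_k}_t-Z^{\star,\mu}_t\vert^2\bigr)\dd t\Bigr] \longrightarrow 0 .
\end{equation*}

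Next we convert this into convergence of the output measures. First, $q_T^{\mu_k}\to q_T^\mu$ in $L^1({\mathbb P})$. Write
\begin{equation*}
\frac{q_T^{\mu_k}}{q_T^\mu} = \exp\Bigl(\int_0^T (Y^{\star,\mu_k}-Y^{\star,\mu})\dd s\Bigr)\,{\mathcal E}_T\Bigl(\int_0^\cdot (Z^{\star,\mu_k}-Z^{\star,\mu})\cdot \dd W^{{\mathbb Q}}\Bigr),
\end{equation*}
where $W^{\mathbb Q}$ is a Brownian motion under ${\mathbb Q}=q_T^\mu{\mathbb P}$ (normalised). The relative entropy, or a Pinsker-type inequality as announced in the introduction, controls $\|q_T^{\mu_k}-q_T^\mu\|_{L^1}$ by the ${\mathbb Q}$-weighted $L^2$ norm of the differences, which vanishes. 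Second, $X^{\mu_k}_T\to X^\mu_T$ in ${\mathbb Q}$-probability. The state equation is linear, so $X^{\mu_k}-X^\mu$ solves a linear SDE driven by $\psi^{\mu_k}-\psi^\mu$. A Gronwall/BDG estimate under ${\mathbb Q}$ gives the bound; note that under ${\mathbb Q}$ the dynamics acquire the drift $\sigma Z^{\star,\mu}$, and it is cleanest to estimate under ${\mathbb P}$ on events where $q^\mu$ is bounded below. Finally, for $\varphi$ bounded continuous,
\begin{equation*}
{\mathbb E}[q_T^{\mu_k}\varphi(X_T^{\mu_k})]-{\mathbb E}[q_T^\mu\varphi(X_T^\mu)] = {\mathbb E}[(q_T^{\mu_k}-q_T^\mu)\varphi(X_T^{\mu_k})]+{\mathbb E}[q_T^\mu(\varphi(X_T^{\mu_k})-\varphi(X_T^\mu))].
\end{equation*}
The first term is controlled by the $L^1$ convergence of the densities. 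The second tends to $0$ by dominated convergence along subsequences, since $q_T^\mu>0$ almost surely and convergence in ${\mathbb Q}$-probability is equivalent to convergence in ${\mathbb P}$-probability. Hence $\Phi(\mu_k)\to\Phi(\mu)$ narrowly.

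The main obstacle is this conversion step, in particular the $L^1$ convergence of $q_T^{\mu_k}$. The estimate from Proposition~\ref{prop:main:estimate} is weighted by $q^\mu_t$ and by $q^{\mu_k}_t$, not by the terminal density, and $Z^\star$ is only known to be $q$-weighted square integrable. I would try to compute ${\mathbb E}[\textrm{Ent}\text{-type}]$ of $q^{\mu_k}$ relative to $q^\mu$ via It\^o's formula up to localising times $\tau_A$, getting roughly $\frac12{\mathbb E}\int_0^T q^{\mu_k}\vert Z^{\star,\mu_k}-Z^{\star,\mu}\vert^2 \dd t$ plus a $Y^\star$ term bounded thanks to $\vert Y^\star\vert\le\alpha$, and then pass to the limit in $A$. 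Pinsker's inequality then gives the conclusion. This is precisely why the estimate is symmetric in the weights $q^\mu+q^{\tilde\mu}$.
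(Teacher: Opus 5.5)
Your proposal is correct and follows essentially the paper's proof: Schauder's theorem on the same compact convex set, the first inequality of Proposition~\ref{prop:main:estimate} combined with \ref{assumption:g:reg:flat} and the uniform a priori bounds to get the weighted $L^2$ convergence of $(\psi,Y^\star,Z^\star)$, then Pinsker's inequality for the densities and linearity of the state equation for $X_T$. The deviations are minor: you get $X_T^{\mu_k}\to X_T^{\mu}$ only in probability by localizing on $\{\inf_t q_t^\mu \geq \delta\}$, which suffices for bounded test functions, whereas the paper proves ${\mathbb E}[(q_T+q_T^\ell)\vert X_T-X_T^\ell\vert^{2-r}]\to 0$ by rerunning the proof of Lemma~\ref{lemma:reg-X-psi-A}; and the paper makes your Pinsker step clean by splitting $q_T = \exp(\int_0^T Y_t^\star \dd t)\,{\mathcal E}_T$, applying Pinsker only to the probability measures ${\mathcal E}_T{\mathbb P}$ and ${\mathcal E}_T^\ell{\mathbb P}$, and handling the $Y^\star$ factor via $\vert Y^\star\vert\leq\alpha$ and Cauchy--Schwarz (note that $W-\int_0^\cdot Z_t^{\star,\mu}\dd t$ is a Brownian motion under ${\mathcal E}_T{\mathbb P}$, not under the normalised $q_T^\mu{\mathbb P}$, so your ratio identity should be read with that measure).
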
 

\begin{proof}
The proof is an application of Schauder's theorem, see \cite[Corollary 17.56]{AliprantisBorder}. 
Throughout, we metricize the narrow topology introduced in Subsection 
\ref{subse:mfg:def} by means of the Fortet-Mourier distance. In fact, the latter extends to a norm on the whole space ${\mathcal M}_{\rm sign}({\mathbb R}^n)$ of signed measures on ${\mathbb R}^n$, 
given by 
\begin{equation*} 
\| \mu \|_{\textrm{\rm FM}} := 
\sup_{\varphi} \left[ \int_{{\mathbb R}^n} 
\varphi(x) \dd \mu(x) \right],
\end{equation*} 
where the supremum is taken over functions $\varphi$ that are bounded by $1$ and that are $1$-Lipschitz continuous. 

Given the  constant $C_3$ from 
Lemma 
\ref{lem:apriori:X}, we consider the collection ${\mathcal C}$ of
 measures 
$\mu \in {\mathcal M}_{2-r}({\mathbb R}^n)$
such that 
$\mu({\mathbb R}^n) \leq \exp(\alpha T)$ and 
$M_{2-r}(\mu) \leq C_3$. 
By Lemma 
\ref{lem:prokhorov} and similar the third point in Remark \ref{re:10:10}, we easily deduce that 
${\mathcal C}$ is compact for $\| \cdot \|_{\textrm{\rm FM}}$. 
Obviously, it is convex. 

We then consider the mappping 
\begin{equation*}
\Phi : {\mathcal C} \ni \mu \mapsto (q_T^\mu {\mathbb P})_{X_T^\mu}.
\end{equation*} 
By Lemma \ref{lem:apriori:X}, ${\mathcal C}$ is stable by 
$\Phi$. 

It remains to check that 
$\Phi$ is continuous. 
We thus consider a sequence $(\mu_\ell)_{\ell \geq 1}$ in ${\mathcal C}$ that converges to 
$\mu$ for the narrow topology. By closedness of ${\mathcal C}$, 
$\mu \in {\mathcal C}$. 
For simplicity, we write $(q,X,\psi,Y^\star,Z^\star)$ for 
$(q^\mu,X^\mu,\psi^\mu,Y^{\star,\mu},Z^{\star,\mu})$ and
$(q^\ell,X^\ell,\psi^\ell,Y^{\star,\ell},Z^{\star,\ell})$
for 
$(q^{\mu_\ell},X^{\mu_\ell},\psi^{\mu_\ell},Y^{\star,{\mu_\ell}},Z^{\star,\mu_\ell})$. 
By \eqref{eq:corol:9:first:claim}
in Proposition
\ref{prop:main:estimate}, 
there exists a constant $c>0$ such that, 
for any $\ell \geq 1$, 
\begin{equation*}
\begin{split}
&{\mathbb E} \left[ q_T^\ell \left( g(X_T,\mu^\ell) 
-
g(X_T,  \mu) \right) 
 + q_T  \left( g(X_T^{\ell},  \mu) 
-
g(X_T^{\ell},  \mu^\ell) \right) 
\right]  
\\
&\geq c 
{\mathbb E} \left[ \int_0^T
\left( q_t 
+q_t^\ell
\right) 
\left(\vert \psi_t^{\ell} - \psi_t
\vert^2 + \vert Y_t^{\star,\ell} - Y_t^{\star} \vert^2 
+ \vert Z_t^{\star,\ell} - Z_t^{\star} \vert^2 \right) \dd t \right]. 
\end{split}
\end{equation*}
By 
\ref{assumption:g:reg:flat}, there exists a 
sequence $\varepsilon_\ell$ that tends to $0$ such that 
\begin{equation*} 
\begin{split}
&{\mathbb E} \left[ q_T^\ell \left( g(X_T,\mu^\ell) 
-
g(X_T,  \mu) \right) 
 + q_T  \left( g(X_T^\ell,  \mu) 
-
g(X_T^{\ell},  \mu^\ell) \right) 
\right]  
\\
&\leq \varepsilon_\ell {\mathbb E} \left[ q_T^\ell \left ( 1+ \vert X_T \vert^{2-r} \right) 
 + q_T \left( 1+ \vert X_T^\ell \vert^{2-r} \right) \right]. 
 \end{split} 
\end{equation*} 
By Lemmas \ref{lem:apriori:S}, \ref{lem:apriori:SStar}
and \ref{lemma:reg-X-psi-A}, 
\begin{equation*} 
\sup_{\ell \geq 1} {\mathbb E} \left[ q_T^\ell \left ( 1+ \vert X_T\vert^{2-r} \right) 
 + q_T \left( 1+ \vert X_T^\ell \vert^{2-r} \right) \right] < + \infty.
\end{equation*}
By the last three displays we deduce that
\begin{equation}
\label{eq:schauder:0}
\begin{split}
\lim_{\ell \rightarrow + \infty} 
{\mathbb E} \left[ \int_0^T
\left( q_t 
+q_t^\ell
\right) 
\left(\vert \psi_t^{\ell} - \psi_t
\vert^2 + \vert Y_t^{\star,\ell} - Y_t^{\star} \vert^2 
+ \vert Z_t^{\star,\ell} - Z_t^{\star} \vert^2 \right) \dd t \right] = 0. 
\end{split}
\end{equation}
Using the linearity of the dynamics of $X$, we observe that 
\begin{equation*} 
X_T - X_T^\ell = X_T^{\psi - \psi^\ell}. 
\end{equation*}
And then, following the proof \cite[Lemma 41]{DelarueLavigne} (which corresponds to Lemma 
\ref{lemma:reg-X-psi-A}), we deduce that 
\begin{equation}
\label{eq:schauder:1} 
\lim_{\ell \rightarrow \infty} {\mathbb E}\left[ \left( q_T + q_T^\ell\right) 
\vert X_T - X_T^\ell \vert^{2-r} \right]=0. 
\end{equation} 
It then remains to prove that 
\begin{equation}
\label{eq:schauder:2} 
\lim_{\ell \rightarrow \infty} {\mathbb E}\left[ \vert q_T - q_T^\ell\vert\right]=0.
\end{equation}
Assume indeed that the above holds true. Then, by combining 
\eqref{eq:schauder:1}
and
\eqref{eq:schauder:2}, we obtain, for any test function 
$\varphi : {\mathbb R}^n \rightarrow {\mathbb R}$ that is bounded by 1
and 1-Lipschitz, 
\begin{equation*}
{\mathbb E} \left[ \vert q_T \varphi(X_T) - q_T^\ell \varphi(X_T^\ell) \vert \right]
\leq {\mathbb E} \left[ \vert q_T - q_T^\ell \vert\right]
+
{\mathbb E} \left[ q_T \vert X_T - X_T^\ell\vert\right]. 
\end{equation*} 
Since the right-hand side tends to $0$ (as $\ell$ tends to $\infty$)
and is independent of $\varphi$, this gives 
$\| (q_T {\mathbb P})_{X_T} - (q_T^\ell  {\mathbb P})_{X_T^\ell}
\|_{\textrm{\rm FM}} \rightarrow 0$ as $\ell$ tends to $\infty$, which yields the required continuity property. 

We now prove \eqref{eq:schauder:2}. We 
let ${\mathcal E}_T:={\mathcal E}_T(\int_0^\cdot Z_s^\star \cdot \dd W_s)$ and 
${\mathcal E}_T^\ell :={\mathcal E}_T(\int_0^\cdot Z_s^{\star,\ell} \cdot \dd W_s)$. By definition of $q_T$ and $q^\ell_T$, we have 
\begin{equation*} 
q_T = \exp\left( \int_0^T Y_t^\star \dd t\right) \mathcal{E}_T, 
\quad 
q_T^\ell = 
 \exp\left( \int_0^T Y_t^{\star,\ell} \dd t\right)\mathcal{E}^\ell_T.
 \end{equation*}
Therefore,
\begin{align}  \nonumber
    \mathbb{E}\left[|q_T - q_T^\ell| \right] \leq \; &\mathbb{E}\left[\mathcal{E}_T \left \vert \exp\left( \int_0^T Y_t^\star \dd t\right) - \exp\left( \int_0^T Y_t^{\star,\ell} \dd t\right) \right \vert  \right] \\[0.5em]  \nonumber
    & + \mathbb{E}\left[ \exp\left( \int_0^T Y_t^{\star,\ell} \dd t\right) \left \vert\mathcal{E}_T - \mathcal{E}^\ell_T \right \vert  \right] \\[0.5em] \nonumber
    \leq \; & \mathbb{E} \left[\mathcal{E}_T\left \vert \exp\left( \int_0^T Y_t^\star \dd t\right) - \exp\left( \int_0^T Y_t^{\star,\ell} \dd t\right) \right \vert  \right] \\[0.5em]
    & + \exp(\alpha T) \mathbb{E}\left[\left \vert\mathcal{E}_T - \mathcal{E}^\ell_T \right \vert  \right], \label{ineq:q-q-prime-Z-Y}
\end{align}
where we used the fact that $Y^{\star,\ell}$ is bounded by $\alpha$ in the last inequality. 

We first consider the  the first term on the last inequality \eqref{ineq:q-q-prime-Z-Y}. Since  $Y^\star$ and $Y^{\star,\ell}$ are bounded by $\alpha$, we have that 
\begin{align} \nonumber
    \mathbb{E} &\left[ {\mathcal E}_T \left \vert \exp\left( \int_0^T Y_t^\star \dd t\right) - \exp\left( \int_0^T Y_t^{\star,\ell} \dd t\right) \right \vert  \right] \\[0.5em] \nonumber
    & \leq
\exp(\alpha T)\mathbb{E} \left[{\mathcal E}_T  \int_0^T \vert   Y_t^\star - Y_t^{\star,\ell} \vert \dd t \right] \\[0.5em]
    & \leq \exp(2 \alpha T)\mathbb{E}\left[q_T \int_0^T \vert   Y_t^\star - Y_t^{\star,\ell} \vert^2 \dd t \right]^{1/2}, \label{ineq:Y-Y-l}
\end{align}
where the last line follows from Cauchy-Schwarz' inequality, the definition of $q$ and the boundedness of $Y^\star$ again.

We now turn to the second term in \eqref{ineq:q-q-prime-Z-Y}. 
By Pinsker's inequality, we know 
that there exists a (universal) constant $c_0$ such that 
\begin{equation*} 
{\mathbb E} \left[ \vert {\mathcal E}_T - {\mathcal E}_T^\ell\vert \right]
\leq c_0
\sqrt{\mathcal{H}\left( {\mathcal E}_T{\mathbb P} \vert {\mathcal E}_T^\ell {\mathbb P}\right)}, 
\quad \textrm{\rm where} \quad \mathcal{H}\left( {\mathcal E}_T{\mathbb P} \vert {\mathcal E}_T^\ell {\mathbb P}\right)
:= {\mathbb E} \left[ \ln\left( \frac{{\mathcal E}_T}{{\mathcal E}_T^\ell} \right)
{\mathcal E}_T \right].
\end{equation*} 
It is standard to prove that 
\begin{equation*} 
\begin{split}
{\mathbb E} \left[ \ln\left( \frac{{\mathcal E}_T}{{\mathcal E}_T^\ell} \right)
{\mathcal E}_T \right]
&= \frac12 {\mathbb E} \left[ {\mathcal E}_T \int_0^T \vert Z_t^\star - Z_t^{\star,\ell} \vert^2 
\dd t \right]
\\[0.5em]
&\leq \exp(\alpha T) {\mathbb E} \left[ q_T \int_0^T \vert Z_t^\star - Z_t^{\star,\ell} \vert^2 
\dd t \right]. 
\end{split}
\end{equation*}
And then, there exists a constant $C_0$, independent of $\ell$, such that 
\begin{equation}  \label{ineq:E-E-l-Z-Z-l}
{\mathbb E} \left[ \vert {\mathcal E}_T - {\mathcal E}_T^\ell\vert \right]
\leq  C_0 {\mathbb E} \left[ q_T \int_0^T \vert Z_t^\star - Z_t^{\star,\ell} \vert^2 
\dd t \right]^{1/2}.
\end{equation} 
Finally combining \eqref{ineq:q-q-prime-Z-Y}--\eqref{ineq:Y-Y-l}--\eqref{ineq:E-E-l-Z-Z-l}, we obtain that 
\begin{align}  \nonumber
    \mathbb{E}\left[|q_T - q_T^\ell| \right] \leq C \left( \mathbb{E}\left[q_T \int_0^T \vert   Y_t^\star - Y_t^{\star,\ell} \vert^2 \dd t \right]^{1/2} + {\mathbb E} \left[ q_T \int_0^T \vert Z_t^\star - Z_t^{\star,\ell} \vert^2 
\dd t \right]^{1/2} \right), 
\end{align}
for some $C >0$ independent on $\ell$, and the conclusion follows by \eqref{eq:schauder:0}.
\end{proof}

\subsection{Uniqueness criterion}
\label{subse:4.4}
Uniqueness is a more subtle issue than in standard mean-field games, due the presence of Nature. 
To understand this, we may just focus on the situation where $T$ is small. 
Of course, we want to use the stability inequality 
\eqref{corol:9:step2:end}, which we recall here for convenience: 
\begin{equation}
\label{corol:9:step2:for:uniqueness} 
\begin{split}
&{\mathbb E} \left[ \left( q_T^\mu 
- q_T^{\tilde \mu} \right) \left( g(X_T^\mu,\mu) - g(X_T^{\tilde \mu},\tilde \mu) \right)
\right]
\\
&\geq
{\mathbb E} \left[  \left( q_T^{\mu} \nabla_x g(X_T^{\mu}, \mu) 
- q_T^{\tilde \mu} \nabla_x g(X_T^{\tilde \mu},\tilde \mu) \right) 
\cdot (X_T^{\mu} - X_T^{\tilde \mu})
\right]
\\
&\hspace{15pt} + c 
{\mathbb E} \left[ \int_0^T
\left( q_t^{\mu} + q_t^{\tilde \mu}\right)  \left(\vert \psi_t^{\tilde \mu} - \psi_t^{\mu} 
\vert^2 + \vert Y_t^{\star,\tilde \mu} - Y_t^{\star,\mu} \vert^2 
+ \vert Z_t^{\star,\tilde \mu} - Z_t^{\star,\mu} \vert^2 \right) \dd t \right].
\end{split}
\end{equation}
In absence of Nature, this inequality becomes very much simpler and just writes
\begin{equation*}
\begin{split}
&
{\mathbb E} \left[  \left(   \nabla_x g(X_T^{\mu}, \mu) 
-  \nabla_x g(X_T^{\tilde \mu},\tilde \mu) \right) 
\cdot (X_T^{\mu} - X_T^{\tilde \mu})
\right]
 \geq c 
{\mathbb E} \left[ \int_0^T
 \vert \psi_t^{\tilde \mu} - \psi_t^{\mu} 
\vert^2  \dd t \right].
\end{split}
\end{equation*}
Although we do not pretend that the derivation of the above display is 
rigorous, it turns out that this is indeed what can be obtained by using the so-called 
`probabilistic approach to mean-field games', see for instance \cite[Chapter 4]{carmona2018probabilistic-v1}. 
When 
$\mu$ is understood as the law of $X_T$ and $\mu'$ as the law of $X_T'$ (under 
the common probability measure ${\mathbb P}$), 
the left-hand side can be upper bounded, under Lipschitz assumptions on the derivatives of 
$g$ (the Lipschitz constant being denoted by the generic letter $L$), by 
\begin{equation*}
{\mathbb E} \left[  \left(   \nabla_x g(X_T^{\mu}, \mu) 
-  \nabla_x g(X_T^{\tilde \mu},\tilde \mu) \right) 
\cdot (X_T^{\mu} - X_T^{\tilde \mu})
\right] \leq L T {\mathbb E} \left[ \int_0^T
 \vert \psi_t^{\tilde \mu} - \psi_t^{\mu} 
\vert^2  \dd t \right].
\end{equation*}
The extra factor $T$ on the right-hand side makes it possible to guarantee uniqueness in small time. 

Here, we want to argue, at least informally, that the same computation can not be reproduced 
in the robust setting. When 
$\mu$ and $\tilde \mu$ are understood as $(q_T {\mathbb P})_{X_T}$ and 
$(\tilde q_T {\mathbb P})_{\tilde X_T}$
respectively 
(with $(q_T,X_T)$ standing for $(q_T^\mu,X_T^\mu)$, and 
$(\tilde q_T,\tilde X_T)$ for $(q_T^{\tilde \mu},X_T^{\tilde \mu})$), the first term on 
\eqref{corol:9:step2:for:uniqueness}
can be estimated as follows, at least in the simpler situation where 
$X_T$ and $\tilde X_T$ are equal (which is of course not true in general, 
but which cannot make the difficulty worse). In the latter situation, we are led to estimate 
$g(X_T,(q_T {\mathbb P}_{X_T}))-g(X_T,(\tilde q_T {\mathbb P})_{X_T})$. 
At best, we can expect to upper bound it by 
${\mathbb E}[\vert q_T - \tilde q_T\vert]$. 
Therefore, the first term on 
\eqref{corol:9:step2:for:uniqueness}
can be bounded by 
${\mathbb E}[\vert q_T - \tilde q_T\vert]^2$, up to a multiplicative constant. 
Then, we know from 
the proof of Theorem \ref{thm:existence}
that this   term can be bounded by means of Pinsker inequality by 
${\mathbb E}[q_T \int_0^T \vert Z_t^\star - \tilde Z_t^\star \vert^2 \dd t]
$ (with an obvious meaning for $Z^\star$ and $\tilde Z^\star$). In particular, there is no extra factor $T$ that could render its contribution smaller than the contribution of the corresponding 
 term on the right-hand side of 
\eqref{corol:9:step2:for:uniqueness}. 

Of course, a more direct way to obtain uniqueness is to multiply $g$ by a small parameter and then obtain 
the desired `contraction' (in the sense that the right hand side on \eqref{corol:9:step2:for:uniqueness} 
dominates the left hand side when $\mu$ and $\tilde \mu$ are equilibria). Although this could be one result towards uniqueness, we feel better to follow another route. The main point is to focus on the difference 
\begin{equation*}
\begin{split}
&{\mathbb E} \left[ \left( q_T^\mu 
- q_T^{\tilde \mu} \right) \left( g(X_T^\mu,\mu) - g(X_T^{\tilde \mu},\tilde \mu) \right)
\right]
\\
&\hspace{15pt} - 
{\mathbb E} \left[  \left( q_T^{\mu} \nabla_x g(X_T^{\mu}, \mu) 
- q_T^{\tilde \mu} \nabla_x g(X_T^{\tilde \mu},\tilde \mu) \right) 
\cdot (X_T^{\mu} - X_T^{\tilde \mu})
\right],
\end{split}
\end{equation*}
when $\mu$ and $\tilde \mu$ satisfy the fixed point conditions
\begin{equation*} 
\mu = (q_T^\mu {\mathbb P})_{X_T^\mu}, 
\quad
\tilde \mu = (q_T^{\tilde \mu} {\mathbb P})_{X_T^{\tilde \mu}}. 
\end{equation*}

This prompts us to introduce the following definition: 
\begin{definition} 
\label{def:flat:displacement}
The function $g$ is said to be (jointly) flat non-increasing/displacement non-decreasing
if, 
for any non-negative-valued random variables $q$ and $q'$
satisfying ${\mathbb E}[q],{\mathbb E}[q'] \leq \exp(\alpha T)$, and for 
any ${\mathbb R}^n$-valued random variables $X$ and $X'$ satisfying 
${\mathbb E}[(q+q') (\vert X \vert^{2-r} + \vert X' \vert^{2-r})] < + \infty$, 
it holds
\begin{equation}
\label{eq:flat:displacement}
\begin{split}
&{\mathbb E} \left[ \left( q
- q'  \right) \left( g(X,(q {\mathbb P})_X) - g(X',
(q' {\mathbb P})_{X'}) \right)
\right]
\\
&\hspace{15pt} - 
{\mathbb E} \left[  \left( q  \nabla_x g(X , (q {\mathbb P})_X) 
- q'  \nabla_x g(X', (q' {\mathbb P})_{X'}) \right) 
\cdot (X  - X')
\right] \leq 0.
\end{split}
\end{equation}
\end{definition}

\begin{remark} 
The following remarks are in order. 
\begin{itemize}
\item It is easy to see that the 
property 
\eqref{eq:flat:displacement} only depends on the joint law 
of $(q,q',X,X')$ under ${\mathbb P}$. In particular, the property 
\eqref{eq:flat:displacement}
can be transferred from one probability space to another. 

In fact, since 
the space $(\Omega,{\mathcal F},{\mathbb P})$
is here equipped with a Brownian motion, we can construct, for any given 
law on $(0,+\infty) \times (0,+\infty) \times {\mathbb R}^n \times {\mathbb R}^n$, 
a 4-tuple $(q,q',X,X')$ having this law under 
${\mathbb P}$ (as we can `reconstruct' any random variable from a 
random variable 
with uniform distribution on $[0,1]$). This guarantees that, on any 
probability space, the above inequality is guaranteed for any random 
variables $q,q',X,X'$ (taking values in the required spaces, and satisfying the required integrability properties).
\item 
Choose $q=q'=1$ in \eqref{eq:flat:displacement}, and deduce that, for any 
${\mathbb R}^n$-valued random variables $X,X'$
satisfying ${\mathbb E}[\vert X\vert^{2-r}],{\mathbb E}[\vert X' \vert^{2-r} ] < + \infty$, 
\begin{equation*}
{\mathbb E} \left[  \left( \nabla_x g(X ,  {\mathbb P}_X) 
-   \nabla_x g(X', {\mathbb P}_{X'})\right) 
\cdot (X  - X')
\right] \geq 0,
\end{equation*}
which is the standard 
displacement monotonicity property. 
\item Choose now $X=X'$ in \eqref{eq:flat:displacement}, and deduce that, 
for any $q,q'$ with positive values, 
\begin{equation*}
{\mathbb E} \left[ \left( q
- q'  \right) \left( g(X,(q {\mathbb P})_X) - g(X,
(q' {\mathbb P})_{X}) \right)
\right] \leq 0.
\end{equation*}
Choose now $X$ as being uniformly distributed on a given domain Borel subset 
$A \subset {\mathbb R}^n$ with finite Lebesgue measure (denoted 
$\textrm{\rm Leb}_{n}(A)$)
 and then 
$(q,q',X)$ such that $q=
\textrm{\rm Leb}_{n}(A)
f(X)$ and $q'=\textrm{\rm Leb}_{n}(A)f'(X)$ for two non-negative functions $f$ and $f'$ with support included in $A$, and satisfying $\int_{{\mathbb R}^n} f(x) \dd x, \int_{{\mathbb R}^n} f'(x) \dd x \leq \exp(\alpha T)$ and 
$\int_{{\mathbb R}^n} \vert x\vert^{2-r} f(x) \dd x, \int_{{\mathbb R}^n} \vert x \vert^{2-r} f'(x) \dd x< +\infty$. The above inequality can be rewritten
\begin{equation*} 
\int_{A} \left( f(x) - f'(x) \right) \left( g(x, f  \textrm{\rm Leb}_{n}) - g(x,f'  \textrm{\rm Leb}_{n})
\right) \dd x \leq 0. 
\end{equation*} 
Obviously ${\mathbb R}^n$ can be substituted for $A$ in the above display. And, then, by a standard approximation argument (using the regularity of $g$ in the measure argument), we deduce that the 
inequality holds true for $f$, $f'$ with $\int_{{\mathbb R}^n} f(x) \dd x, \int_{{\mathbb R}^n} f'(x) \dd x \leq \exp(\alpha T)$. And then, approximating any (finite non-negative) measure on ${\mathbb R}^n$ by 
measures with densities, we deduce that, for any measures $m,m' \in {\mathcal M}_{2-r}({\mathbb R}^n)$, 
\begin{equation*} 
\int_{A}   \left( g(x, m) - g(x,m')
\right) \dd \left( m - m'\right)(x) \leq 0,
\end{equation*} 
which is an anti-Lasry-Lions monotonicity condition.  
\item
It is not clear to us whether a function that is non-increasing in the flat sense (as in the previous item) and, separately displacement non-decreasing (as in the penultimate item), is 
(jointly) flat non-increasing/displacement non-decreasing as in Definition \ref{def:flat:displacement}. 
\end{itemize}
%
%
%

\end{remark}

We provide below a canonical example of a function $g$ satisfying Definition 
\ref{def:flat:displacement}.

\begin{lemma} 
\label{lem:17}
Let ${\mathcal G}$ be a function from 
${\mathcal M}_{2-r}({\mathbb R}^n)$ that is flat concave and displacement convex, in the sense that, for 
all $\mu,\mu' \in {\mathcal M}_{2-r}({\mathbb R}^n)$, 
\begin{equation} 
\begin{split} 
&{\mathcal G}(\mu') \leq {\mathcal G}(\mu) + \int_{{\mathbb R}^n} \frac{\delta {\mathcal G}}{\delta \mu}(\mu,x) 
\dd \left( \mu' - \mu\right)(x),
\\
&{\mathcal G}(\mu') \geq {\mathcal G}(\mu) + 
\int_{{\mathbb R}^n \times {\mathcal R}^n} 
\partial_\mu {\mathcal G}(\mu,x) \cdot (y-x) \dd \pi(x,y),
\end{split}
\label{eq:convexity:concavity:22}
\end{equation}
where 
$\pi$ on the last term is a coupling between $\mu$ and $\mu'$, i.e. 
$\pi$ has $\mu$ as first marginal on ${\mathbb R}^n$ and 
$\mu'$ as second marginal. 

Then, the function 
\begin{equation*} 
(x,\mu) \mapsto g(x,\mu) := 
\frac{\delta {\mathcal G}}{\delta \mu}(\mu,x)
\end{equation*}
is jointly flat non-increasing/displacement non-decreasing. 
\end{lemma}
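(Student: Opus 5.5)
Set $\mu := (q{\mathbb P})_X$ and $\mu' := (q'{\mathbb P})_{X'}$, with $q,q',X,X'$ as in Definition \ref{def:flat:displacement}. The plan is to write the two expectations in \eqref{eq:flat:displacement} as integrals against $\mu,\mu'$ or against couplings of them, and then add the two inequalities in \eqref{eq:convexity:concavity:22}, each applied twice with the roles of $\mu$ and $\mu'$ exchanged. Throughout we use the standard identification $\nabla_x \frac{\delta {\mathcal G}}{\delta \mu}(\mu,x) = \partial_\mu {\mathcal G}(\mu,x)$, so that $\nabla_x g(x,\mu) = \partial_\mu {\mathcal G}(\mu,x)$.

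\textit{Flat part.} By definition of image measures,
\begin{equation*}
{\mathbb E}\bigl[(q-q')\bigl(g(X,\mu)-g(X',\mu')\bigr)\bigr]
= \int g(\cdot,\mu)\, \dd\mu - {\mathbb E}[q'g(X,\mu)] - {\mathbb E}[q g(X',\mu')] + \int g(\cdot,\mu')\, \dd\mu'.
\end{equation*}
The cross terms are not integrals against $\mu$ or $\mu'$, because $X$ is weighted by $q'$ rather than by $q$. I therefore introduce the auxiliary measures $\nu := (q'{\mathbb P})_X$ and $\nu' := (q{\mathbb P})_{X'}$. The flat concavity inequality, applied at $\mu$ toward $\nu$ and at $\mu'$ toward $\nu'$, gives
\begin{equation*}
\int g(\cdot,\mu)\, \dd(\mu-\nu) \le {\mathcal G}(\mu)-{\mathcal G}(\nu),
\qquad
\int g(\cdot,\mu')\, \dd(\mu'-\nu') \le {\mathcal G}(\mu')-{\mathcal G}(\nu').
\end{equation*}
Adding these two inequalities yields
\begin{equation*}
\textrm{first term} \le {\mathcal G}(\mu)+{\mathcal G}(\mu')-{\mathcal G}(\nu)-{\mathcal G}(\nu').
\end{equation*}

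\textit{Displacement part.} The random pair $(X,X')$ under $q{\mathbb P}$ is a coupling of $\mu$ and $\nu'$, and under $q'{\mathbb P}$ it is a coupling of $\nu$ and $\mu'$. The displacement convexity inequality, applied along the first coupling, gives
\begin{equation*}
{\mathcal G}(\nu') \ge {\mathcal G}(\mu) + {\mathbb E}\bigl[q\, \nabla_x g(X,\mu)\cdot(X'-X)\bigr].
\end{equation*}
Applied along the second coupling, read in reverse from $\mu'$ to $\nu$, it gives
\begin{equation*}
{\mathcal G}(\nu) \ge {\mathcal G}(\mu') + {\mathbb E}\bigl[q'\, \nabla_x g(X',\mu')\cdot(X-X')\bigr].
\end{equation*}
Summing these two inequalities produces exactly
\begin{equation*}
{\mathcal G}(\nu)+{\mathcal G}(\nu')-{\mathcal G}(\mu)-{\mathcal G}(\mu') \ge -\,{\mathbb E}\bigl[\bigl(q\nabla_x g(X,\mu)-q'\nabla_x g(X',\mu')\bigr)\cdot(X-X')\bigr].
\end{equation*}
Combining this with the bound from the flat part gives \eqref{eq:flat:displacement}.

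\textit{Main obstacle.} The algebra above closes; the difficulty is justifying it.
\begin{enumerate}
\item The masses of $\nu$ and $\nu'$ are ${\mathbb E}[q']$ and ${\mathbb E}[q]$, which equal the masses of $\mu'$ and $\mu$ respectively, not those of $\mu$ and $\mu'$. The couplings are therefore between measures of possibly different total masses, so the displacement inequality must be understood for non-normalized measures of equal mass. The first coupling pairs $\mu$ with $\nu'$ and the second pairs $\nu$ with $\mu'$, so the masses do agree in both cases.
\item One must check that $\nu,\nu' \in {\mathcal M}_{2-r}$, which is where ${\mathbb E}[(q+q')(|X|^{2-r}+|X'|^{2-r})]<\infty$ is used.
\item One must check integrability of $\nabla_x g\cdot(X-X')$ under $q$ and $q'$ from the growth bounds in \ref{assumption:g:growth}.
\end{enumerate}
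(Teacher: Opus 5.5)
Your proof is correct and uses exactly the same four inequalities as the paper's proof: flat concavity at $(q{\mathbb P})_X$ toward $(q'{\mathbb P})_X$ and at $(q'{\mathbb P})_{X'}$ toward $(q{\mathbb P})_{X'}$, together with displacement convexity along the pair $(X,X')$ under $q{\mathbb P}$ and under $q'{\mathbb P}$. The only differences are presentational: the paper chains each flat inequality with a displacement inequality and then symmetrizes, whereas you sum the two flat and the two displacement inequalities separately through ${\mathcal G}(\mu)+{\mathcal G}(\mu')-{\mathcal G}(\nu)-{\mathcal G}(\nu')$, and the paper delegates the justification issues you list (equal-mass couplings, $\nu,\nu' \in {\mathcal M}_{2-r}$, integrability) to \cite[Corollary 15]{DelarueLavigne}.
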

The notions of derivatives used in the statement are standard. In brief, 
the flat derivative 
$\delta {\mathcal G}/ \delta \mu$ is defined 
as 
$$\frac{\delta {\mathcal G}}{ \delta \mu}(\mu,x) =
\frac{\dd}{\dd \varepsilon}_{\vert \varepsilon=0+} 
{\mathcal G}\left( \mu + \varepsilon \delta_x \right), 
\quad \mu \in {\mathcal M}_{2-r}({\mathbb R}^n), \quad 
x \in {\mathbb R}^n,$$
and
the intrinsic derivative 
$\partial_\mu {\mathcal G}$ as
$$\partial_{\mu} {\mathcal G}(\mu,x) =
\nabla_x \frac{\delta {\mathcal G}}{ \delta \mu}(\mu,x), 
\quad \mu \in {\mathcal M}_{2-r}({\mathbb R}^n), \quad 
x \in {\mathbb R}^n.$$
Implicitly, the derivatives are required to be jointly continuous in $\mu$ and $x$, and to satisfy growth conditions ensuring the well-posedness of the two integrals in \eqref{eq:convexity:concavity:22}. Precise definitions and conditions, together with examples of flat-concave/displacement-convex functions, are provided in \cite[Subsection~4]{DelarueLavigne}.

\begin{proof}
By  \cite[Corollary 15]{DelarueLavigne}, 
the following two inequalities hold true for any $q,q'$ and 
any $X,X'$
as in Definition \ref{def:flat:displacement}:
\begin{equation} 
\label{example:potential:1}
\begin{split}
   &  {\mathcal G} \left(q {\mathbb P})_{X'}  \right) 
    \geq  
    {\mathcal G}( (q {\mathbb P})_X) + \mathbb{E}\left[q  \partial_{\mu} {\mathcal G}\left(
    ( q {\mathbb P})_X    ,X \right) \cdot (X'-X) \right],
\\
&   {\mathcal G}\left((q' {\mathbb P})_X\right) 
    \leq  {\mathcal G} ((q {\mathbb P})_X) + \int_{\mathbb{R}^n} \frac{\delta {\mathcal G}}{\delta \mu}\left((q {\mathbb P})_X,x\right) \dd \left[(q' {\mathbb P})_X - (q {\mathbb P})_X\right](x).
\end{split}
\end{equation} 
We rewrite the second line as
\begin{equation*} 
{\mathcal G}\left((q {\mathbb P})_X\right) 
    \geq  {\mathcal G} ((q' {\mathbb P})_X) + \int_{\mathbb{R}^n} \frac{\delta {\mathcal G}}{\delta \mu}\left((q {\mathbb P})_X,x\right) \dd \left[(q {\mathbb P})_X - (q' {\mathbb P})_X\right](x).
\end{equation*} 
And then, exchanging the roles of $(q,X)$ and of $(q',X')$ in the above inequality
and then using the first line of 
\eqref{example:potential:1},
\begin{equation*}
\begin{split}
{\mathcal G}\left((q' {\mathbb P})_{X'}\right) 
    &\geq  {\mathcal G} ((q {\mathbb P})_{X'}) + \int_{\mathbb{R}^n} \frac{\delta {\mathcal G}}{\delta \mu}\left((q' {\mathbb P})_{X'},x\right) \dd \left[(q' {\mathbb P})_{X'} - (q{\mathbb P})_{X'}\right](x)
\\
&\geq     {\mathcal G}( (q {\mathbb P})_X) + \mathbb{E}\left[q  \partial_{\mu} {\mathcal G}\left(
    ( q {\mathbb P})_X    ,X \right) \cdot (X'-X) \right]\\
    &\hspace{15pt} 
    + \int_{\mathbb{R}^n} \frac{\delta {\mathcal G}}{\delta \mu}\left((q' {\mathbb P})_{X'},x\right) \dd \left[(q' {\mathbb P})_{X'} - (q{\mathbb P})_{X'}\right](x).
\end{split}
\end{equation*} 
Next, we exchange once again the roles of $(q,X)$ and $(q',X')$ and then sum the two resulting inqualities. We get 
\begin{equation*}
\begin{split}
&- \mathbb{E}\left[
\left( 
q'  \partial_{\mu} {\mathcal G}\left(
    ( q' {\mathbb P})_{X'}    ,X' \right) 
-
q  \partial_{\mu} {\mathcal G}\left(
    ( q {\mathbb P})_X    ,X \right)\right)    
    \cdot (X'-X) \right] 
    \\
    &\hspace{15pt} 
    + \int_{\mathbb{R}^n} \frac{\delta {\mathcal G}}{\delta \mu}\left((q' {\mathbb P})_{X'},x\right) \dd \left[(q' {\mathbb P})_{X'} - (q{\mathbb P})_{X'}\right](x)
    \\
    &\hspace{15pt} 
    + \int_{\mathbb{R}^n} \frac{\delta {\mathcal G}}{\delta \mu}\left((q {\mathbb P})_{X},x\right) \dd \left[(q {\mathbb P})_{X} - (q'{\mathbb P})_{X}\right](x) \leq 0.
\end{split}
\end{equation*}
Letting 
$g(x,\mu) = [\delta {\mathcal G}/\delta \mu](\mu,x)$ as 
done in the statement, and recalling   that 
$\nabla_x g(x,\mu) = \partial_\mu {\mathcal G}(\mu,x)$, the above display can be rewritten 
as
\begin{equation*}
\begin{split}
&- \mathbb{E}\left[
\left( 
q \nabla_x g 
    \left(X,  (q{\mathbb P})_{X}    \right)
-
q'  \nabla_x g \left(X',
    ( q' {\mathbb P})_{X'}     \right)   
    \right) 
    \cdot (X-X') \right] 
    \\
    &\hspace{15pt} 
    + \int_{\mathbb{R}^n} g\left(x,(q' {\mathbb P})_{X'}\right) \dd \left[(q' {\mathbb P})_{X'} - (q{\mathbb P})_{X'}\right](x)
    \\
    &\hspace{15pt} 
    + \int_{\mathbb{R}^n} g\left(x,(q {\mathbb P})_{X} \right) \dd \left[(q {\mathbb P})_{X} - (q'{\mathbb P})_{X}\right](x) \leq 0,
\end{split}
\end{equation*}
and then 
\begin{equation*}
\begin{split}
&- \mathbb{E}\left[
\left( 
q \nabla_x g 
    \left(X,  (q{\mathbb P})_{X}    \right)
-
q'  \nabla_x g \left(X',
    ( q' {\mathbb P})_{X'}     \right)   
    \right) 
    \cdot (X-X') \right] 
    \\
    &\hspace{15pt} 
    + {\mathbb E} \left[ 
    (q'-q) \left( g\left(X',(q' {\mathbb P})_{X'}\right)
    - 
    g\left(X,(q {\mathbb P})_{X}\right) \right) \right]\leq 0,
\end{split}
\end{equation*}
which  completes the proof. 
\end{proof}

Here is now the main result of this section: 
\begin{proposition}
\label{prop:uniqueness}
Let Assumptions \ref{hyp:b}-\ref{assumption:g:reg:flat}  be in force. 
If further, the function $g$ is flat non-increasing/displacement non-decreasing, then there exists a unique equilibrium in the sense of 
Definition 
\ref{def:mfg:equilibrium}. 
\end{proposition}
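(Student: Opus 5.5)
Existence is already given by Theorem \ref{thm:existence}, so only uniqueness remains to be proved. Let $\mu$ and $\tilde \mu$ be two equilibria in the sense of Definition \ref{def:mfg:equilibrium}, so that $\mu = (q_T^\mu {\mathbb P})_{X_T^\mu}$ and $\tilde \mu = (q_T^{\tilde \mu}{\mathbb P})_{X_T^{\tilde \mu}}$. The plan is to combine the stability inequality \eqref{corol:9:step2:end} of Proposition \ref{prop:main:estimate} with the structural inequality \eqref{eq:flat:displacement}.

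Apply \eqref{eq:flat:displacement} with $q=q_T^\mu$, $q'=q_T^{\tilde\mu}$, $X=X_T^\mu$ and $X'=X_T^{\tilde\mu}$. The admissibility requirements hold for the following reasons:
\begin{itemize}
\item ${\mathbb E}[q_T^\mu],{\mathbb E}[q_T^{\tilde\mu}]\le \exp(\alpha T)$, because $Y^\star$ is bounded by $\alpha$ and the Doléans-Dade exponential is a nonnegative supermartingale;
\item the moment bound ${\mathbb E}[(q+q')(|X|^{2-r}+|X'|^{2-r})]<\infty$ follows from Lemma \ref{lemma:reg-X-psi-A} together with Lemmas \ref{lem:apriori:S} and \ref{lem:apriori:SStar}, which bound ${\mathcal S}(q^\mu)$, ${\mathcal S}(q^{\tilde\mu})$, ${\mathcal S}^\star(\psi^\mu)$ and ${\mathcal S}^\star(\psi^{\tilde\mu})$. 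The cross terms such as ${\mathbb E}[q_T^\mu |X_T^{\tilde\mu}|^{2-r}]$ are also covered, since Lemma \ref{lemma:reg-X-psi-A} holds for arbitrary pairs $(q,\psi)$.
\end{itemize}
Because of the fixed-point identities, $(q {\mathbb P})_X=\mu$ and $(q'{\mathbb P})_{X'}=\tilde\mu$. Hence the left-hand side of \eqref{eq:flat:displacement} is exactly the difference between the first line of \eqref{corol:9:step2:end} and its second line, and it is nonpositive.

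Plugging this into \eqref{corol:9:step2:end}, we get
\begin{equation*}
c\,{\mathbb E}\Bigl[\int_0^T (q_t^\mu+q_t^{\tilde\mu})\bigl(|\psi_t^{\tilde\mu}-\psi_t^\mu|^2+|Y_t^{\star,\tilde\mu}-Y_t^{\star,\mu}|^2+|Z_t^{\star,\tilde\mu}-Z_t^{\star,\mu}|^2\bigr)\dd t\Bigr]\le 0 .
\end{equation*}
Since $q^\mu$ is strictly positive, being an exponential, this gives $\psi^\mu=\psi^{\tilde\mu}$, $Y^{\star,\mu}=Y^{\star,\tilde\mu}$ and $Z^{\star,\mu}=Z^{\star,\tilde\mu}$, $\dd{\mathbb P}\otimes\dd t$-a.e. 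By the representation \eqref{eq:q:explicit:factorization}, $q^\mu$ and $q^{\tilde\mu}$ coincide, in particular $q_T^\mu=q_T^{\tilde\mu}$ almost surely. This can also be read off the estimates \eqref{ineq:Y-Y-l} and \eqref{ineq:E-E-l-Z-Z-l} from the proof of Theorem \ref{thm:existence}, which control ${\mathbb E}|q_T^\mu-q_T^{\tilde\mu}|$ by exactly these weighted quantities. Because the state equation is linear and both states start from $\eta$, we have $X^\mu_T-X^{\tilde\mu}_T=X^{\psi^\mu-\psi^{\tilde\mu}}_T$ with the initial condition and the affine terms removed, so $X_T^\mu=X_T^{\tilde\mu}$. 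Then $\mu=(q_T^\mu{\mathbb P})_{X_T^\mu}=(q_T^{\tilde\mu}{\mathbb P})_{X_T^{\tilde\mu}}=\tilde\mu$.

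The main point needing care is the exact matching of \eqref{eq:flat:displacement} with the two lines of \eqref{corol:9:step2:end}, including signs and the roles of $\mu$ and $\tilde\mu$. The integrability required in Definition \ref{def:flat:displacement} must also be checked, and the cross-moment bounds from Lemma \ref{lemma:reg-X-psi-A} settle it. The remaining steps are direct.
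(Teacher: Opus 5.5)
Your proposal is correct and follows the paper's own argument: insert the fixed-point identities into \eqref{corol:9:step2:end} and use \eqref{eq:flat:displacement} to force the weighted right-hand side to vanish. You also spell out the steps the paper leaves implicit, and these are all sound: existence via Theorem \ref{thm:existence}, the mass and integrability conditions of Definition \ref{def:flat:displacement}, and the passage from $\psi^\mu=\psi^{\tilde\mu}$, $Y^{\star,\mu}=Y^{\star,\tilde\mu}$, $Z^{\star,\mu}=Z^{\star,\tilde\mu}$ to $q_T^\mu=q_T^{\tilde\mu}$, $X_T^\mu=X_T^{\tilde\mu}$, and hence $\mu=\tilde\mu$.
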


\begin{proof}
The proof is a straightforward consequence of 
\eqref{corol:9:step2:end}. 
Consider indeed two equilibria,
denoted by $\mu,\tilde \mu$. 
Using the fact that 
\begin{equation*} 
\mu = (q_T^\mu {\mathbb P})_{X_T^\mu}, 
\quad 
\tilde \mu = (q_T^{\tilde \mu} {\mathbb P})_{X_T^{\tilde \mu}}, 
\end{equation*}
we get 
\begin{equation*} 
\begin{split}
&{\mathbb E} \left[ \left( q_T^\mu 
- q_T^{\tilde \mu} \right) \left( g\left(X_T^\mu, 
(q_T^\mu {\mathbb P})_{X_T^\mu}
\right) - g\left(X_T^{\tilde \mu},  (q_T^{\tilde \mu} {\mathbb P})_{X_T^{\tilde \mu}}\right) \right)
\right]
\\
&\hspace{15pt}
- 
{\mathbb E} \left[  \left( q_T^{\mu} \nabla_x g\left(X_T^{\mu}, 
(q_T^\mu {\mathbb P})_{X_T^\mu}
\right) 
- q_T^{\tilde \mu} \nabla_x g\left(X_T^{\tilde \mu},  (q_T^{\tilde \mu} {\mathbb P})_{X_T^{\tilde \mu}}
\right) \right) 
\cdot (X_T^{\mu} - X_T^{\tilde \mu})
\right]
\\
&\geq c 
{\mathbb E} \left[ \int_0^T
\left( q_t^{\mu} + q_t^{\tilde \mu}\right)  \left(\vert \psi_t^{\tilde \mu} - \psi_t^{\mu} 
\vert^2 + \vert Y_t^{\star,\tilde \mu} - Y_t^{\star,\mu} \vert^2 
+ \vert Z_t^{\star,\tilde \mu} - Z_t^{\star,\mu} \vert^2 \right) \dd t \right].
\end{split}
\end{equation*}
By 
\eqref{eq:flat:displacement}, the left-hand side is less than (or equal to) $0$, from 
which we deduce that the right-hand side is equal to $0$. 
\end{proof}

\paragraph{Constructing flat non-decreasing/displacement non-increasing functions}
The purpose of this paragraph is to provide a tractable condition ensuring that $g$ satisfies Definition~\ref{def:flat:displacement}, beyond the potential regime introduced in the statement of Lemma~\ref{lem:17}.

Typically, we require the function 
$g$ to be jointly convex in the flat sense, i.e. 
\begin{equation} 
\label{eq:examples:joint:convex}
g(x',\mu') \geq g(x,\mu) + 
\nabla_x g(x,\mu) \cdot (x'-x) 
+ \int_{{\mathbb R}^n}
\frac{\delta g}{\delta \mu}
(x,\mu,y) \dd \left( \mu' - \mu \right)(y),
\end{equation} 
for any $x,x' \in {\mathbb R}^n$ and $\mu,\mu' \in {\mathcal M}_{2-r}({\mathbb R}^n)$. 
Implicitly, the function $g$ is assumed to be differentiable (in the flat sense) with respect to 
the measure argument, and   the integral 
on the right-hand side is assumed to make sense. 

Back to 
Definition~\ref{def:flat:displacement}, the purpose is to upper bound the left-hand side on
\eqref{eq:flat:displacement}.
Thanks to \eqref{eq:examples:joint:convex},
we have 
\begin{equation*}
\begin{split}
&{\mathbb E} \left[  q
 \left( g(X,(q {\mathbb P})_X) - g(X',
(q' {\mathbb P})_{X'})
-
\nabla_x g(X , (q {\mathbb P})_X) 
\cdot (X  - X')
 \right)
\right]
\\
&= {\mathbb E} \left[  q
 \left( g(X,(q {\mathbb P})_X) - g(X',
(q' {\mathbb P})_{X'})
+
\nabla_x g(X , (q {\mathbb P})_X) 
\cdot (X'  - X)
 \right)
\right]
\\
&\leq 
 - {\mathbb E} \left[  q
 \int_{{\mathbb R}^n}
\frac{\delta g}{\delta \mu}
\left(X, (q {\mathbb P})_X,y \right) \dd \left(  
(q' {\mathbb P})_{X'}
-
(q {\mathbb P})_X
 \right)(y) \right]. 
\end{split}
\end{equation*} 
Exchanging the roles of $(q,X)$ and $(q',X')$
and summing the two resulting inequalities, we get 
\begin{equation*}
\begin{split}
&{\mathbb E} \left[ \left( q
- q'  \right) \left( g(X,(q {\mathbb P})_X) - g(X',
(q' {\mathbb P})_{X'}) \right)
\right]
\\
&\hspace{15pt} - 
{\mathbb E} \left[  \left( q  \nabla_x g(X , (q {\mathbb P})_X) 
- q'  \nabla_x g(X', (q' {\mathbb P})_{X'}) \right) 
\cdot (X  - X')
\right] 
\\
&\leq 
  {\mathbb E} \left[  
 \int_{{\mathbb R}^n}
\left( q' 
\frac{\delta g}{\delta \mu}
\left( X',(q' {\mathbb P})_{X'},y \right) 
-
q
\frac{\delta g}{\delta \mu}
\left(X, (q {\mathbb P})_X,y \right)
\right)  \dd \left(  
(q' {\mathbb P})_{X'}
-
(q {\mathbb P})_X
 \right)(y) \right]
 \\
 &= 
 \int_{{\mathbb R}^n}
 \left[
 \int_{{\mathbb R}^n} 
\frac{\delta g}{\delta \mu}
\left( z,(q' {\mathbb P})_{X'},y \right) 
\dd(q' {\mathbb P})_{X'}(z) 
\right]\dd \left(  
(q' {\mathbb P})_{X'}
-
(q {\mathbb P})_X
 \right)(y)
 \\
&\hspace{15pt}  -
 \int_{{\mathbb R}^n}
\left[ 
 \int_{{\mathbb R}^n}
\frac{\delta g}{\delta \mu}
\left( z,(q {\mathbb P})_{X},y \right) 
\dd(q {\mathbb P})_{X}(z)
\right]
\dd \left(  
(q' {\mathbb P})_{X'}
-
(q {\mathbb P})_X
 \right)(y). 
 \end{split}
\end{equation*} 
And then, in order to guarantee
\eqref{eq:flat:displacement},  it suffices to have 
\begin{equation}
\label{eq:negative:type}
\begin{split}
& \int_{{\mathbb R}^n}
 \left[
 \int_{{\mathbb R}^n} 
\frac{\delta g}{\delta \mu}
\left( z,\mu',y \right) 
\dd\mu'(z) 
-
 \int_{{\mathbb R}^n} 
\frac{\delta g}{\delta \mu}
\left( z,\mu,y \right) 
\dd\mu(z) 
\right]\dd \left(  
\mu' 
-
\mu
 \right)(y) \leq 0, 
 \end{split}
\end{equation} 
for any 
$\mu,\mu' \in {\mathcal M}_{2-r}({\mathbb R}^n)$. 

Here is a typical example:
\begin{lemma}
\label{lem:example:neg}
Let $K : {\mathbb R}^n \times {\mathbb R}^n \rightarrow {\mathbb R}$ be a smooth function, bounded with bounded derivatives of any order, 
of negative type, i.e., 
satisfying for any smooth 
function $h : {\mathbb R}^n \rightarrow {\mathbb R}$
with a compact support, 
\begin{equation}
\label{eq:neg:type}
\int_{{\mathbb R}^n \times {\mathbb R}^n} 
K(x,y) h(x) h(y) \dd x \dd y
\leq 0.
\end{equation} 
Then, the function
$g$ defined by  
\begin{equation*}
g(x,\mu) = \int_{{\mathbb R}^n \times {\mathbb R}^n} K(x,y) \dd \mu(y), 
\quad x \in {\mathbb R}^n, \ \mu \in {\mathcal M}_{2-r}({\mathbb R}^n), 
\end{equation*} 
satisfies 
\eqref{eq:negative:type}. 
\end{lemma}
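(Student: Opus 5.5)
The plan is to compute the flat derivative of $g$ explicitly and reduce \eqref{eq:negative:type} to the negative-type property \eqref{eq:neg:type}, via a density argument. Since $g(x,\mu)=\int K(x,y)\,\dd\mu(y)$ is linear in $\mu$, its flat derivative is $\frac{\delta g}{\delta\mu}(x,\mu,y)=K(x,y)$. This is independent of $\mu$; the usual normalization constant does not matter here, because integrating a constant against $\mu'-\mu$ produces terms that we handle directly below. Substituting into \eqref{eq:negative:type}, the left-hand side becomes
\begin{equation*}
\int_{{\mathbb R}^n}\Bigl[\int_{{\mathbb R}^n} K(z,y)\,\dd(\mu'-\mu)(z)\Bigr]\dd(\mu'-\mu)(y)
=\int_{{\mathbb R}^n\times{\mathbb R}^n} K(z,y)\,\dd\nu(z)\,\dd\nu(y),
\qquad \nu:=\mu'-\mu,
\end{equation*}
where $\nu$ is a finite signed measure. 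Every integral is finite because $K$ is bounded and $\mu,\mu'$ have finite mass. So the statement reduces to showing that $\iint K\,\dd\nu\,\dd\nu\le 0$ for every finite signed measure $\nu$, knowing this only for $\nu=h(x)\,\dd x$ with $h$ smooth and compactly supported.

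Next, I extend \eqref{eq:neg:type} from such densities to finite signed measures by approximation, in two substeps.

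\emph{Truncation.} Replace $\nu$ by $\nu_R:=\nu\,\mathbf 1_{B_R}$. Since $K$ is bounded, $\bigl|\iint K\,\dd\nu\,\dd\nu-\iint K\,\dd\nu_R\,\dd\nu_R\bigr|\le 2\|K\|_\infty\,|\nu|({\mathbb R}^n)\,|\nu|(B_R^c)$, which tends to $0$ as $R\to\infty$.

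\emph{Mollification.} Set $h_\varepsilon:=\rho_\varepsilon*\nu_R$, with $\rho_\varepsilon$ a standard mollifier. Then $h_\varepsilon$ is smooth with compact support, so \eqref{eq:neg:type} gives $\iint K h_\varepsilon h_\varepsilon\le 0$. By Fubini,
\begin{equation*}
\iint K(x,y)h_\varepsilon(x)h_\varepsilon(y)\,\dd x\,\dd y=\iint K_\varepsilon(z,w)\,\dd\nu_R(z)\,\dd\nu_R(w),
\qquad K_\varepsilon(z,w):=\iint K(z+u,w+v)\rho_\varepsilon(u)\rho_\varepsilon(v)\,\dd u\,\dd v .
\end{equation*}
Because $K$ is Lipschitz, being smooth with bounded derivatives, $\|K_\varepsilon-K\|_\infty\le C\varepsilon$. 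The right-hand side therefore converges to $\iint K\,\dd\nu_R\,\dd\nu_R$ as $\varepsilon\to0$.

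Letting first $\varepsilon\to0$ and then $R\to\infty$ gives $\iint K\,\dd\nu\,\dd\nu\le 0$, which is \eqref{eq:negative:type}.

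The only delicate point is this approximation of a general signed measure (possibly with atoms, e.g.\ Dirac masses) by smooth compactly supported densities. The Fubini identity above makes it routine: since $K$ is bounded and Lipschitz, the error is controlled by a sup-norm estimate times the total variation squared, with no weak-convergence subtleties for signed measures. I also need to check that the growth requirements of \ref{hyp:g:mu:fixed}–\ref{assumption:g:reg:flat} are consistent with a bounded $K$, but that is outside the scope of this statement.
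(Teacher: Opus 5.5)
Your proof is correct and follows the paper's argument: you take $\frac{\delta g}{\delta \mu}(x,\mu,y)=K(x,y)$, rewrite the left-hand side of \eqref{eq:negative:type} as $\iint K\,\dd(\mu'-\mu)\,\dd(\mu'-\mu)$, and your truncation--mollification step rigorously supplies the passage from smooth compactly supported densities to finite signed measures that the paper dismisses as ``easy to see''. One small correction: your aside about a normalization constant is moot and its justification is inaccurate, since here the flat derivative on the cone is defined by $\frac{\dd}{\dd\varepsilon}\vert_{\varepsilon=0+}$ with no normalization, and because $\mu'-\mu$ need not have zero mass a constant would not actually drop out.
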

\begin{proof}
The proof is quite obvious as 
the left-hand side on 
\eqref{eq:negative:type}
rewrites
\begin{equation*}
\begin{split}
&\int_{{\mathbb R}^n}
 \left[
 \int_{{\mathbb R}^n} 
\frac{\delta g}{\delta \mu}
\left( z,\mu',y \right) 
\dd\mu'(z) 
-
 \int_{{\mathbb R}^n} 
\frac{\delta g}{\delta \mu}
\left( z,\mu,y \right) 
\dd\mu(z) 
\right]\dd \left(  
\mu' 
-
\mu
 \right)(y)
 \\
 &= \int_{{\mathbb R}^n}
 \int_{{\mathbb R}^n} 
 K(z,y) 
\dd \left( \mu'- \mu \right) (z)  \dd \left(  
\mu' 
-
\mu
 \right)(y).
\end{split}
\end{equation*} 
By
\eqref{eq:neg:type}, it is easy to see that the left-hand side is negative. 
\end{proof}

\begin{example}
{ \ } 

\begin{itemize}
\item 
A first example for $K$ satisfying 
\eqref{eq:neg:type} is 
\begin{equation*} 
K(x,y) = - \phi(x) \phi(y), 
\end{equation*} 
for a smooth function $\phi : {\mathbb R}^n \rightarrow {\mathbb R}$. 
\item Another example is 
\begin{equation*}
K(x,y) = - \int_{{\mathbb R}^n} \phi(x,r) \phi(y,r) \dd \lambda(r),
\end{equation*}
where $\lambda$ is a compactly supported  positive finite measure on ${\mathbb R}^k$, and
$\phi$ is a smooth function from ${\mathbb R}^n \times {\mathbb R}^k$ to ${\mathbb R}$. 
\item The first two examples are symmetric in $(x,y)$, as a result of which the function 
$g$, as defined in Lemma
\ref{lem:example:neg}, derives from a potential. 

That said, any (smooth) function $K$ that is
anti-symmetric, i.e. $K(x,y)=-K(y,x)$, satisfies 
Lemma
\ref{lem:example:neg}.
\end{itemize}
\end{example}

Of course, the function 
$g$ defined in the statement of Lemma 
\ref{lem:example:neg}
does not satisfy the joint convexity condition
\eqref{eq:examples:joint:convex}. 
To make it jointly convex, we may add a function that is convex in 
the variable $x$. 
Following the examples constructed in \cite[Subsection 4]{DelarueLavigne}, we claim 

\begin{lemma}
Let $K$ be 
as in the statement of Lemma \ref{lem:example:neg}. 

\begin{enumerate}
\item If $r=0$, we can find $\lambda$ large enough such that the function 
\begin{equation*} 
g(x,\mu) = \frac{\lambda}2 \vert x \vert^2 + \int_{{\mathbb R}^n} K(x,y) \dd \mu(y)
\end{equation*}
satisfies 
\eqref{eq:examples:joint:convex}
and, therefore, is 
jointly 
flat non-decreasing/displacement non-increasing functions.
\item If $r=1$ and $K$ is compactly supported, 
we can find $\lambda$ large enough such that the function 
\begin{equation*} 
g(x,\mu) =  \lambda \left( 1 + \vert x \vert^2 \right)^{1/2}+ \int_{{\mathbb R}^n} K(x,y) \dd \mu(y)
\end{equation*}
satisfies 
\eqref{eq:examples:joint:convex}
and, therefore, is jointly 
flat non-decreasing/displacement non-increasing functions.
\end{enumerate}
\end{lemma}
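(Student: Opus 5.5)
The plan is to verify \eqref{eq:examples:joint:convex} directly for the two explicit choices of $g$ and then to invoke the computation already carried out in the paragraph preceding Lemma \ref{lem:example:neg}. That computation shows that \eqref{eq:examples:joint:convex}, combined with \eqref{eq:negative:type}, gives \eqref{eq:flat:displacement}. Lemma \ref{lem:example:neg} provides \eqref{eq:negative:type} for the kernel part; the convex part $\tfrac{\lambda}{2}\vert x\vert^2$, respectively $\lambda(1+\vert x\vert^2)^{1/2}$, does not depend on $\mu$ and so does not contribute to $\delta g/\delta \mu$. Since $g$ is linear in $\mu$, we have $\frac{\delta g}{\delta\mu}(x,\mu,y)=K(x,y)$. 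With $\phi_\lambda$ denoting the convex part, the defect in \eqref{eq:examples:joint:convex} rewrites as
\begin{equation*}
\begin{split}
D&:=\phi_\lambda(x')-\phi_\lambda(x)-\nabla\phi_\lambda(x)\cdot(x'-x)
\\
&\hspace{15pt}+\int_{{\mathbb R}^n}\bigl[K(x',y)-K(x,y)-\nabla_xK(x,y)\cdot(x'-x)\bigr]\dd\mu'(y)
\\
&\hspace{15pt}+\int_{{\mathbb R}^n}\nabla_xK(x,y)\cdot(x'-x)\,\dd(\mu'-\mu)(y).
\end{split}
\end{equation*}
The first step is to control the first two lines. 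In case $r=0$, Taylor's formula gives a lower bound of $\tfrac12\bigl(\lambda-\|\nabla^2_{x}K\|_\infty\,\mu'({\mathbb R}^n)\bigr)\vert x'-x\vert^2$. Restricting, as elsewhere in the paper, to masses at most $\exp(\alpha T)$, this is $\ge \tfrac{\lambda}{4}\vert x'-x\vert^2$ once $\lambda\ge 2\|\nabla_x^2K\|_\infty e^{\alpha T}$.

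In case $r=1$, the Hessian of $(1+\vert x\vert^2)^{1/2}$ is bounded below by $(1+\vert x\vert^2)^{-3/2}$. This lower bound is uniform only on compacts, which is where compact support of $K$ enters: $\nabla_x^2K(\cdot,y)$ vanishes outside a fixed ball $B_R$. Along the segment $[x,x']$, the negative contribution of the kernel therefore only accrues on $[x,x']\cap B_R$, where $\lambda(1+R^2)^{-3/2}$ dominates $\|\nabla^2_xK\|_\infty e^{\alpha T}$ for $\lambda$ large. Integrating the second-order Taylor remainder along the segment then gives nonnegativity of the first two lines.

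The main obstacle is the third line, the cross term. It is linear in $x'-x$ and linear in $\mu'-\mu$, and the first two lines provide only $\vert x'-x\vert^2$ coercivity, with no coercivity in $\mu'-\mu$ at all, because $g$ is affine in $\mu$. Young's inequality would need a term $\|\mu'-\mu\|^2$ to absorb $\vert x'-x\vert\,\|\mu'-\mu\|$, and none is available. The first check will be the perturbation $x'=x+\varepsilon v$, $\mu'=\mu+\delta\nu$ with $\varepsilon\ll\delta$: I expect it to show that the pointwise inequality can fail unless $\nabla_xK(x,\cdot)$ integrates to zero against signed measures of the admissible class. If so, I will prove exactly the weaker property actually used downstream. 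That is, I will show that \eqref{eq:examples:joint:convex} holds after integration against $q\,\dd{\mathbb P}$ with $\mu=(q{\mathbb P})_X$ and $\mu'=(q'{\mathbb P})_{X'}$. In that case the cross terms, summed after exchanging $(q,X)\leftrightarrow(q',X')$, combine into $\int\!\!\int K\,\dd(\mu'-\mu)\,\dd(\mu'-\mu)$-type quantities, which Lemma \ref{lem:example:neg} controls. I will state this restriction explicitly, since the conclusion that $g$ is jointly flat non-increasing/displacement non-decreasing only requires this integrated form.
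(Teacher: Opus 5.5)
Your treatment of the first two lines is fine, and your suspicion about the third is correct. In fact \eqref{eq:examples:joint:convex} fails for both choices of $g$, for every $\lambda$, as soon as $\nabla_x K\not\equiv 0$. Take $\mu'=\mu+\delta\,\delta_{y_0}$ and $x'=x-\varepsilon\nabla_xK(x,y_0)$; then $D=-\varepsilon\delta\vert\nabla_xK(x,y_0)\vert^2+O(\varepsilon^2)<0$ for small $\varepsilon$. The paper gives no argument for this lemma beyond the claim. The route it implicitly relies on (check \eqref{eq:examples:joint:convex}, then run the computation preceding Lemma~\ref{lem:example:neg}) breaks down exactly at this point.

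The gap is in your repair, and it fails in two places.
\begin{itemize}
\item The $q$-integrated inequality $\mathbb{E}[q\,D]\geq 0$ fails for the same reason. Set $X'=X+\varepsilon V$ with $V=-\nabla H_0(X)$, where $H_0:=\int K(\cdot,y)\,\mathrm{d}((q'-q)\mathbb{P})_X(y)$. Then $\mathbb{E}[q\,D]=-\varepsilon\,\mathbb{E}[q\vert\nabla H_0(X)\vert^2]+O(\varepsilon^2)$.
\item Exchanging and summing does not turn the cross terms into $\int\!\!\int K\,\mathrm{d}\nu\,\mathrm{d}\nu$, with $\nu:=\mu'-\mu$. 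Let $H(x):=\int K(x,y)\,\mathrm{d}\nu(y)$. Let $B$ and $B'$ be your first two lines, evaluated at $(X,\mu;X',\mu')$ and at the swapped configuration $(X',\mu';X,\mu)$. Then the left-hand side of \eqref{eq:flat:displacement} equals exactly
\[
-\mathbb{E}\bigl[qB+q'B'\bigr]+\int\!\!\int K\,\mathrm{d}\nu\,\mathrm{d}\nu-\mathbb{E}\bigl[\bigl(q\nabla H(X)+q'\nabla H(X')\bigr)\cdot(X'-X)\bigr].
\]
\end{itemize}
The flat-derivative terms produce only the middle quantity. The last term is still first order in both $X'-X$ and $\nu$. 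Lemma~\ref{lem:example:neg} gives only the sign of $\int\!\!\int K\,\mathrm{d}\nu\,\mathrm{d}\nu$, which cannot absorb that term.

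What is missing is a quantitative bound $\sup_x\vert\nabla H(x)\vert^2\leq -C\int\!\!\int K\,\mathrm{d}\nu\,\mathrm{d}\nu$.
\begin{itemize}
\item When $K$ is symmetric, this bound follows from the Cauchy--Schwarz inequality for the nonnegative bilinear form $(\rho,\nu)\mapsto-\int\!\!\int K\,\mathrm{d}\rho\,\mathrm{d}\nu$, applied with $\rho=(\delta_{x+tv}-\delta_x)/t$ and $t\to 0$. Young's inequality and $\mathbb{E}[q+q']\leq 2e^{\alpha T}$ then absorb the cross term into the $\lambda$-coercivity of $B$ and $B'$. For $r=1$ this needs a localisation using the compact support.
\item When $K$ is antisymmetric, $\int\!\!\int K\,\mathrm{d}\nu\,\mathrm{d}\nu\equiv 0$ and the conclusion itself is false for every $\lambda$. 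Lemma~\ref{lem:example:neg} and the Example after it explicitly allow such kernels. Take $n=1$, $r=0$, $K(x,y)=\sin(x-y)$, $X$ uniform on $\{0,\pi\}$, $q\equiv 1$, $q'=1+\tfrac12(\mathds{1}_{\{X=0\}}-\mathds{1}_{\{X=\pi\}})$ and $X'=X+\varepsilon(\mathds{1}_{\{X=\pi\}}-\mathds{1}_{\{X=0\}})$. Then the left-hand side of \eqref{eq:flat:displacement} equals $\varepsilon+O((1+\lambda)\varepsilon^2)$, which is positive for small $\varepsilon$. A cut-off of the same kernel gives a counterexample for $r=1$.
\end{itemize}
So the integrated statement you announce cannot be proved under the stated hypotheses. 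A symmetry assumption on $K$ has to be added.
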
 

Here, the choice of the convex perturbation is adapted to the value of $r$, so that $g$ satisfies the required growth properties in \ref{hyp:g:mu:fixed}.

\section{Limiting theory}
\label{se:limiting}

In this section, we investigate the connection between the mean-field game problem \eqref{pb:mfg} and a finite-player game in which $N$ players interact with Nature. The model is presented in  Subsection \ref{sec:lln}. In Subsection \ref{subsec:game-N-player-1-Nature}, we establish an $\varepsilon_N$-Nash equilibrium result for the finite-player game.

\subsection{Game with N-competitive players vs. Nature}
\label{sec:lln}

\paragraph{A primer on the law of large numbers.}

The construction of the $N$-player game relies on the following variant of the law of large numbers:

\begin{lemma}
\label{lem:lln:2}
Let $(q,X)$ be a random variable with values in $(0,+\infty) \times {\mathbb R}^n$. 
Assume that ${\mathbb E}[q]=1$. 
Let $(q^i,X^i)_{i \geq 1}$ be an I.I.D. sequence with the law of $(q,X)$ as common distribution (the 
sequence being constructed on $(\Omega,{\mathcal F},{\mathbb P})$). Then, 
\begin{equation*}
\forall \varepsilon >0, \quad 
\lim_{N \rightarrow + \infty} 
{\mathbb E} 
\left[ \left( \prod_{i=1}^N q^i \right) 
{\mathds 1}_{\{
d_{\textrm{\rm FM}}\left( 
\frac1N \sum_{i=1}^N \delta_{X^i} , (q {\mathbb P})_X
\right) > \varepsilon\}}
\right]=0,
\end{equation*}
where 
$d_{\textrm{\rm FM}}$ is the Fortet-Mourier distance
$d_{\textrm{\rm FM}}(\mu,\nu)= \| \mu - \nu \|_{\textrm{\rm FM}}$, for 
$\mu,\nu \in {\mathcal P}({\mathbb R}^n)$, and
$
\| \cdot \|_{\textrm{\rm FM}}$ 
is defined in 
the proof of Theorem 
\ref{thm:existence}. 
\end{lemma}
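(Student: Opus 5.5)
The idea is a change of measure: the weighted expectation is a genuine probability under a tilted product measure, and under that measure the classical law of large numbers applies. Set $Q_N := \prod_{i=1}^N q^i$. Since the $(q^i,X^i)$ are i.i.d. with ${\mathbb E}[q^i]=1$, $Q_N$ is a non-negative random variable with ${\mathbb E}[Q_N]=1$. Hence ${\mathbb Q}^N := Q_N {\mathbb P}$ is a probability measure on $(\Omega,{\mathcal F})$, and the quantity in the statement is
\begin{equation*}
{\mathbb Q}^N\Bigl( d_{\textrm{\rm FM}}\Bigl( \tfrac1N \textstyle\sum_{i=1}^N \delta_{X^i}, (q{\mathbb P})_X\Bigr) > \varepsilon \Bigr).
\end{equation*}

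The first step identifies the law of $(X^1,\dots,X^N)$ under ${\mathbb Q}^N$. Take bounded measurable $\varphi_1,\dots,\varphi_N$. By independence under ${\mathbb P}$,
\begin{equation*}
{\mathbb E}\Bigl[ Q_N \prod_{i=1}^N \varphi_i(X^i)\Bigr] = \prod_{i=1}^N {\mathbb E}[q^i \varphi_i(X^i)] = \prod_{i=1}^N \int \varphi_i \,\dd (q{\mathbb P})_X .
\end{equation*}
So under ${\mathbb Q}^N$ the variables $X^1,\dots,X^N$ are i.i.d. with common law $\mu := (q{\mathbb P})_X$, which is a probability measure because ${\mathbb E}[q]=1$. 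The only thing to check here is that the factorization uses independence of the pairs $(q^i,X^i)$ across $i$; independence between $q^i$ and $X^i$ is not needed.

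The second step is the empirical law of large numbers, with a bound that depends only on $\mu$ and $N$ (the measure ${\mathbb Q}^N$ changes with $N$, so a.s. convergence for a fixed sequence is not directly usable). The clean route is Varadarajan-type convergence in probability, made uniform in the way the sample is realized. Because $\mu$ is tight, for $\delta>0$ pick a compact $K$ with $\mu(K^c)\le\delta$. The class of functions bounded by $1$ and $1$-Lipschitz, restricted to $K$, is totally bounded in sup norm (Arzelà–Ascoli), so choose a finite $\delta$-net $\varphi_1,\dots,\varphi_m$.

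For any such test function $\varphi$, the error $\bigl|\frac1N\sum_i \varphi(X^i) - \int\varphi\,\dd\mu\bigr|$ is bounded by three pieces:
\begin{itemize}
\item $\max_j \bigl|\frac1N\sum_i \varphi_j(X^i) - \int\varphi_j\,\dd\mu\bigr|$;
\item a term of order $\delta$ coming from the net approximation on $K$;
\item $\frac2N\#\{i : X^i\notin K\} + 2\delta$, coming from the mass outside $K$.
\end{itemize}
Each of the finitely many averages, including $\frac1N\#\{i: X^i\notin K\}$, is a mean of i.i.d. bounded variables under ${\mathbb Q}^N$. Chebyshev's inequality therefore bounds the probability of a deviation larger than $\delta$ by $C(m)/(N\delta^2)$, which depends only on $\mu$, $\delta$ and $N$.

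Choosing $\delta$ small relative to $\varepsilon$ gives $\limsup_N {\mathbb Q}^N(d_{\textrm{\rm FM}}>\varepsilon) = 0$. The main obstacle is keeping the estimate uniform over the Fortet–Mourier sup, and the compact truncation plus the finite net handles it.
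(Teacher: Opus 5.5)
Your proof is correct, and its first step, identifying the law of $(X^1,\dots,X^N)$ under $Q_N\mathbb{P}$ as $[(q\mathbb{P})_X]^{\times N}$ through independence of the pairs $(q^i,X^i)$, is exactly the paper's. The routes diverge at the second step.

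You argue that a.s. convergence along a fixed sequence is not directly usable because $\mathbb{Q}^N$ changes with $N$. You therefore reprove a quantitative weak law of large numbers in $d_{\textrm{\rm FM}}$, using compact truncation, a finite Arzel\`a--Ascoli net and Chebyshev. The paper removes that obstacle in one line. The event depends only on the law of $(X^1,\dots,X^N)$, so one takes a single i.i.d. sequence $(\tilde X^i)_{i\ge 1}$ with common law $(q\mathbb{P})_X$ under $\mathbb{P}$. For every $N$, the quantity in the statement then equals $\mathbb{P}\bigl(d_{\textrm{\rm FM}}\bigl(\frac1N\sum_{i=1}^N\delta_{\tilde X^i},(q\mathbb{P})_X\bigr)>\varepsilon\bigr)$. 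The classical a.s. convergence of empirical measures of one fixed i.i.d. sequence then applies directly.

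The paper's route is shorter and rests on a standard result. Yours is self-contained and gives an explicit bound of the form $C(m)/(N\delta^2)$. That bound depends on the law only through a compact $K$ with small complementary mass, so it stays uniform over tight families of laws. This is the kind of uniformity the paper later has to build by hand in Lemma \ref{lem:LLN:polynomials} (truncation, a dense family on a ball, an $L^4$ estimate), where the laws are no longer fixed.

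One precision for your net step: choose $\varphi_1,\dots,\varphi_m$ inside the class itself, i.e. globally bounded by $1$ and $1$-Lipschitz. Your bound $|\varphi-\varphi_j|\le 2$ off $K$ relies on this.
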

This result says that the standard empirical measure converges, in probability 
under $q^1\ldots q^N {\mathbb P}$, to $(q {\mathbb P})_X$.

\begin{proof}
We consider an I.I.D. sequence $(\tilde X^i)_{i \geq 1}$ with common distribution 
$(q {\mathbb P})_X$ under 
${\mathbb P}$. It is easy to see that, for each $N \geq 1$, the law of 
$(X^1,\ldots,X^N)$ under $q^1\ldots q^N {\mathbb P}$ is equal to 
$[(q {\mathbb P})_X]^{\times N}$, which is also the law of $(\tilde X^1,\ldots,\tilde X^N)$ (but under 
${\mathbb P}$). In particular, 
\begin{equation*}
\begin{split}
\forall \varepsilon >0, \quad
&{\mathbb E} 
\left[ \prod_{i=1}^N q_i 
{\mathds 1}_{\{
d_{\textrm{\rm FM}}\left( 
\frac1N \sum_{i=1}^N \delta_{X^i} , (q {\mathbb P})_X
\right) > \varepsilon\}}
\right]
\\
&= {\mathbb P} \left( 
\left\{
d_{\textrm{\rm FM}}\left( 
\frac1N \sum_{i=1}^N \delta_{\tilde X^i} , (q {\mathbb P})_X
\right) > \varepsilon
\right\}
\right),
\end{split}
\end{equation*}
but the right-hand side tends to $0$, as a consequence of the law of large numbers. 
\end{proof}

\paragraph{Presentation of the game.}
Based on Lemma \ref{lem:lln:2}, 
we now construct a game with $N$ competitive players playing against Nature, whose asymptotic version corresponds to the game studied in Section 
\ref{se:mfg}. 
Due to the restriction on the mass of $q$ imposed in Lemma 
\ref{lem:lln:2}, Nature' state in \eqref{eq:q:explicit:factorization}
is assumed to be a Doléans-Dade exponential, i.e. 
$Y^\star \equiv 0$ in the mean-field game. 

We consider the  product space $(\Omega^{\times N},{\mathcal F}^{\times N},{\mathbb P}^{\times N})$, 
and 
we equip its $i$-th factor  with an ${\mathbb R}^d$-valued Brownian motion 
$W^i=(W^i_t)_{t \in [0,T]}$ and an initial condition $\eta^i$, $\eta^i$ and $W^i$ being independent. 
We assume that all the random variables $\eta^1,\ldots,\eta^N$ are identically distributed, the support of their common statistical law being bounded. 
We denote by ${\mathbb F}^N=({\mathcal F}_t^N)_{0 \le t \le T}$ the completion of the filtration generated by 
$(\eta^1,\ldots,\eta^N,W^1,\ldots,W^n)$.

Below the function $b^i$ is a copy of $b$ on the $i$-th factor of $\Omega^{\times N}$, i.e., for any $\omega^{(N)} = (\omega^1, \ldots ,\omega^N) \in \Omega^{\times N}$, the quantity $b^i(\omega^{(N)},t,x,\psi)$ depends on $\omega^{(N)}$ only through $\omega^i$ and is thus equal to $b^i(\omega^{i},t,x,\psi)$.
The functions $\sigma^i$,
$\ell^i$ and $f^{\star,i}$ are constructed from $b$, $\sigma$,
$\ell$ and $f^{\star}$ in the same way.

The admissible set of Nature, denoted by $\mathcal{Q}^{(N)}$, is the class of $\mathbb{F}^N$-progressively measurable, positive valued processes $(Q^N_t)_{t \in [0,T]}$ such that (compare with \eqref{eq:q:explicit:factorization})
\begin{align*}
        &\mathcal{S}^{(N)}(Q^N) < + \infty, & Q^N_t :=    \prod_{i=1}^N
{\mathcal E}_t \left(  \int_0^\cdot 
Z_s^{\star,i} \cdot \dd W_s^i \right), \quad t \in [0,T],
  \end{align*}
with $(Z^{\star,1},\ldots,Z^{\star,N})$
acting as Nature's control. The mapping $\mathcal{S}^N$ denotes the $N$-player generalized entropy counterpart, defined as follows 
\begin{equation*}
{\mathcal S}^{(N)}(Q^N) 
:= 
{\mathbb E}^{\times N} 
\left[ \int_0^T
Q^N_s
\left( 
\sum_{i=1}^N f^{\star,i}(s,Z_s^{\star,i})
\right) 
\dd s
\right]. 
\end{equation*}
Here, the function 
\begin{equation*}
\mathbb{R}^{d \times N} \ni \left(z^{\star,1}, 
\ldots,z^{\star,N}\right)
\mapsto 
\sum_{i=1}^N
f^{\star,i}(t,z^{\star,i})
\end{equation*}
is understood as the Fenchel-Legendre transform of the function 
\begin{equation*}
\mathbb{R}^{d \times N} \ni  \left(z^{1}. 
\ldots,z^{N}\right) \mapsto \sum_{i=1}^N f^i(t,z^i),
\end{equation*}
where, as before, 
$f^i(\omega^{(N)},t,z)$ is equal to 
$f(t,\omega^i,z)$ (with $f$ being now independent of $y$ as 
Nature's mass remains equal to 1).

The control and state processes to player $i \in \{1,\ldots,N\}$ are denoted by 
$\psi^i = (\psi^i_t)_{t \in [0,T]}$ and 
$X^i = (X_t^i)_{t \in [0,T]}$ respectively, both processes 
taking values in ${\mathbb R}^n$. 
When needed, we 
write $X^{i,\psi^i}$ to emphasize the fact that 
$X^i$ is controlled by $\psi^i$. 
Following \eqref{eq:intro:X}, 
the dynamics of
 $X^i$ write
\begin{equation*} 
\dd X_t^i =b^i(t,X_t^i,\psi_t^i) \dd t + \sigma^i(t,\psi_t^i) \dd W_t^i, \quad t \in [0,T], 
\quad X_0^i = \eta^i.
\end{equation*} 
The admissible set of each player $i \in \{1,\ldots,N\}$ is denoted by $\mathcal{A}^{(N)}$ (it does not depend on $i$) and consists  
in a class of  
    ${\mathbb F}^N$-progressively-measurable,
    ${\mathbb R}^n$-valued processes 
    $ \psi=(\psi_t)_{0 \le t \le T}$ such that
    \begin{equation*}
    \mathcal{S}^{\star,(N)}(\psi) < +\infty, \quad 
       \mathcal{S}^{\star,(N)}(\psi) \coloneqq \sup_{Q^N \in \mathcal{Q}^{(N)}} \left\{\mathbb{E}\left[ \int_0^T Q^N_s |\psi_s|^2 \dd s \right] - \gamma \mathcal{S}^{(N)}(Q^N)\right\}.
    \end{equation*}
Importantly, the parameter $\gamma$ remains unchanged and is thus independent of $N$.

Given a control $q \in \mathcal{Q}^N$ of Nature, the cost to player $i \in \{1,\ldots,N\}$ is defined as
\begin{equation*}
\begin{split} 
{\mathcal R}^i(Q^N,\psi^1,\ldots,\psi^N) 
 &: =
 \mathbb{E}^{\times N} \left[ Q^N_T
 \left(  
  g^i\left(X^{\psi^i}_T, \mu_T^{\psi^1,\ldots,\psi^N}
\right) 
\right)\right]
\\
&\hspace{15pt} + \mathbb{E}^{\times N} \left[  \int_0^T  Q^N_s  \ell^i(s,\psi_s^i)  \dd s \right], 
\end{split} 
\end{equation*}
where 
\begin{equation} \label{def:empirical-minus-i}
\mu^{\psi^1,\ldots,\psi^N}_T := \frac{1}{N} \sum_{i =1}^N \delta_{X_T^{i,\psi^i}}.
\end{equation}
The $N$ adversarial players are  also in competition 
against Nature, whose reward is given by 

\begin{equation}
\label{eq:mathcalJN}
\begin{split} 
{\mathcal J}^{(N)}\left(Q^N,\psi^1,\ldots,\psi^N\right) &:=
{\mathbb E}^{\times N}\left[ Q_T^N \sum_{i=1}^N h \left(X_T^{i,\psi^i}, \mu^{\psi^1,\ldots,\psi^N}_T\right)
\right] 
\\
&\hspace{15pt} 
+ 
{\mathbb E}^{\times N} \left[ \int_0^T Q_s^N  
 \sum_{i=1}^N \ell^i( s,\psi_s^i) \dd s
\right]
-
\mathcal{S}^{(N)} \left(  Q^N \right),
\end{split} 
\end{equation}
where $h : {\mathbb R}^n \times {\mathcal P}_{2-r}({\mathbb R}^n) \rightarrow {\mathbb R}$
is a 
function satisfying the same properties as $g$ (It may be equal to $g$, but not necessarily).

The intuition is as follows: when Nature is frozen, players act as a in `standard' $N$-game under the 
measure $q{\mathbb P}$; but Nature penalizes them by choosing the worst (standing from players' viewpoint) 
$q$ according to the reward ${\mathcal J}^{(N)}$.

In this framework, we have
\begin{definition}
\label{eq:Nash:def}
A tuple $(Q^N,(\psi^1,\ldots,\psi^N)) \in {\mathcal Q}^{(N)} \times [{\mathcal A}^{(N)}]^N$ is said to be a Nash equilibrium (over open loop controls) if, 
for any other tuple $(\tilde Q^N,(\tilde \psi^1,\ldots,\tilde \psi^N))$ in the same class, the following $N+1$ inequalities hold true:
\begin{equation*}
{\mathcal R}^{i}\left(Q^N,(\psi^1,\ldots,\psi^N)\right) \leq{\mathcal R}^{i}\left(Q^N,(\psi^1,\ldots,\psi^{i-1},\tilde \psi^i, \psi^{i+1},\ldots,\psi^N)\right), 
\end{equation*}
for 
$i=1,\ldots,N$, 
and
\begin{equation*}
{\mathcal J}^{(N)}\left(Q^N,(\psi^1,\ldots,\psi^N)\right) \geq{\mathcal J}^{(N)}\left(\tilde Q^N,(\psi^1,\ldots,\psi^N)\right).
\end{equation*}
\end{definition}

\subsection{Approximate Nash equilibria}\label{subsec:game-N-player-1-Nature}

\paragraph{Strategy induced by a mean-field equilibrium.}

Thanks to Theorem \ref{thm:existence}, we can consider one equilibrium to the mean-field game set over 
\eqref{pb:optim-mfg}. We denote $q^*$ the Nature equilibrium state, and $\psi^*$ the player equilibrium control. 
Both $q^*$ and $\psi^*$ are defined on the space $(\Omega,{\mathcal F},{\mathbb P})$. 
On the extended product space $(\Omega^{\times N},{\mathcal F}^{\times N},{\mathbb P}^{\times N})$, 
we let, for any 
$i=1,\ldots,N$, 
\begin{equation*}
q^{*,i}(\omega_1,\ldots,\omega_N)= q^*(\omega^i), \quad 
\psi^{*,i}(\omega_1,\ldots,\omega_N) = \psi^*(\omega^i), \quad 
(\omega_1,\ldots,\omega_N) \in \Omega^{\times N},
\end{equation*} 
which makes it possible to define 
\begin{equation*}
Q^{*,N} := \prod_{i=1}^N q^{i}. 
\end{equation*} 
Below, we write 
$X^{*,i}$ for 
$X^{i,\psi^{*,i}}$, 
and 
we represent $q^{*,i}$ in the form 
$q^{*,i}={\mathcal E}_{\cdot}(\int_0^\cdot 
Z_s^{\star,*,i} \cdot \dd W_s^i)$. 

The strategy constructed in this way is called a \textit{mean-field} strategy. It could also be referred to as a \textit{distributed} strategy in the following sense : 
\begin{definition}
A strategy $(Q=q^1\ldots q^N,(\psi^1,\ldots,\psi^N))$ is said to be distributed if, for 
each $i \in \{1,\ldots,N\}$, 
$(q^i,\psi^i)$ is $\sigma(\eta^i,W^i)$-measurable. 
\end{definition}

\paragraph{Players deviating from the mean-field equilibrium.}
The purpose of the paragraphs below is to show that, in a certain sense, the mean-field strategy is an \textit{approximate equilibrium} of the $N$-player game.
The whole analysis is carried out 
under the following assumption, which is rather stronger than 
\ref{hyp:g:mu:fixed}-\ref{assumption:g:reg:flat} 
but which suffices to illustrate our approach:

\paragraph{Assumptions}\textit{(continued)}
\begin{enumerate}[label*=A\arabic*,resume]
\item \label{assumption:g:N} 
The function $g$ is the sum of two functions
\begin{equation*}
g(x,\mu) = g_0(x) + g_1(x,\mu),
\end{equation*}
where $g_0$ satisfies \ref{hyp:g:mu:fixed}, and 
 $g_1$ also satisfies \ref{hyp:g:mu:fixed}, and is bounded and Lipschitz continuous in 
$(x,\mu)$, when ${\mathcal P}_{2-r}({\mathbb R}^n)$
is equipped with the Fortet-Mourier distance.
\end{enumerate}

It is easy to see that 
\ref{assumption:g:growth} 
and 
\ref{assumption:g:reg:flat} 
are necessarily satisfied under \ref{assumption:g:N}.

 In the statement below, we show that a player who unilaterally deviates
from the mean-field strategy can only expect a modest reduction
in their loss. This corresponds to the classical result in mean-field game theory.

\begin{lemma}
\label{lem:23}
Let \ref{hyp:b}--\ref{assumption:g:N} be in force.
Then, there exists a sequence $(\varepsilon_N)_{N \geq 1}$ converging to 
$0$ such that, for any 
$i=1,\ldots,N$, and any 
$\tilde \psi^i \in {\mathcal A}^{(N)}$, it holds
\begin{equation*}
\begin{split} 
{\mathcal R}^i\left(Q^{*,N},\psi^{*,1},\ldots,\psi^{*,i-1},\tilde \psi^i,\psi^{*,i+1},\ldots,\psi^{*,N}\right) 
 &\geq {\mathcal R}^i\left(Q^{*,N},\psi^{*,1},\ldots,\psi^{*,N} \right) 
 - \varepsilon_N.
 \end{split} 
\end{equation*}
\end{lemma}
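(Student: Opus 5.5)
The plan is to compare the deviating player's cost with the cost in the mean-field problem \eqref{pb:optim-mfg} with $\mu=\mu^*:=(q_T^*{\mathbb P})_{X_T^*}$ frozen. Because $Q^{*,N}$ is a product, player $i$ effectively faces the measure $Q^{*,N}{\mathbb P}^{\times N}$. Under this measure the other states $(X_T^{*,j})_{j\neq i}$ are i.i.d. with law $\mu^*$, and they are independent of player $i$'s own $\sigma(\eta^i,W^i)$-component. For any $\tilde\psi^i\in{\mathcal A}^{(N)}$, I will split $g=g_0+g_1$ as in \ref{assumption:g:N} and write
\[
{\mathcal R}^i(Q^{*,N},\ldots,\tilde\psi^i,\ldots)
={\mathbb E}^{\times N}\Bigl[Q^{*,N}_T g(X_T^{i,\tilde\psi^i},\mu^*)+\int_0^T Q^{*,N}_s\ell^i(s,\tilde\psi^i_s)\,\dd s\Bigr]+\Delta^N(\tilde\psi^i).
\]
The remainder $\Delta^N$ only involves $g_1$. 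It is bounded by $\|g_1\|_{\rm Lip}\,{\mathbb E}^{\times N}[Q_T^{*,N}\, d_{\rm FM}(\tilde\mu^N,\mu^*)]$, where $\tilde\mu^N$ is the empirical measure that includes the deviating player.

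\textbf{Step 1 (uniform empirical-measure error).} I have $d_{\rm FM}(\tilde\mu^N,\mu^N_{-i})\le 2/N$, where $\mu^N_{-i}$ is the empirical measure of the $N-1$ non-deviating players. This holds because the FM test functions are bounded by $1$, so the bound does not depend on $\tilde\psi^i$. Next, $\mu^N_{-i}$ does not involve player $i$, and ${\mathbb E}[q^{*,i}_T\mid \text{others}]=1$, so
\[
{\mathbb E}^{\times N}[Q_T^{*,N}d_{\rm FM}(\mu^N_{-i},\mu^*)]={\mathbb E}\Bigl[\prod_{j\ne i}q^{*,j}_T\, d_{\rm FM}(\mu^N_{-i},\mu^*)\Bigr].
\]
Since $d_{\rm FM}\le 2e^{\alpha T}$ is bounded, Lemma \ref{lem:lln:2} (with $Y^\star\equiv0$, so ${\mathbb E}[q^*_T]=1$) makes this tend to $0$. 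Together these give $\sup_{\tilde\psi^i}|\Delta^N(\tilde\psi^i)|\le\varepsilon_N/2\to0$. By symmetry the bound is uniform in $i$.

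\textbf{Step 2 (optimality of $\psi^*$ against a frozen measure).} I then need to show that
\[
\tilde\psi^i\mapsto {\mathbb E}^{\times N}\Bigl[Q_T^{*,N}g(X_T^{i,\tilde\psi^i},\mu^*)+\int_0^TQ_s^{*,N}\ell^i(s,\tilde\psi^i_s)\,\dd s\Bigr]
\]
is minimized at $\psi^{*,i}$. By Theorem \ref{theorem:SMP} the saddle point gives $\psi^*\in\argmin_{\psi}\mathcal J[\mu^*](\psi,q^*)$, with the entropy term constant in $\psi$. The difficulty is that $\tilde\psi^i$ may depend on the other players' noises, so it is not an element of ${\mathcal A}$. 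To handle this I will avoid using optimality directly and instead use convexity plus the Pontryagin system, as in Lemma \ref{lemma:sufficient}. Concretely, I will lift $(p^*,k^*,X^*)$ to the $i$-th factor, multiply $p^{*,i}$ by $\prod_{j\neq i}q^{*,j}$, and obtain an adjoint pair in the enlarged filtration ${\mathbb F}^N$ for the measure $Q^{*,N}$. Convexity of $g(\cdot,\mu^*)$ and of $\ell$ then gives
\[
\text{cost}(\tilde\psi^i)-\text{cost}(\psi^{*,i})\ge{\mathbb E}\Bigl[P_T\cdot(X_T^{i,\tilde\psi^i}-X_T^{*,i})+\int_0^T Q^{*,N}_s\nabla_\psi\ell\cdot(\tilde\psi^i_s-\psi^{*,i}_s)\,\dd s\Bigr],
\]
and a duality (Itô) computation, paralleling Lemma \ref{lemma:sufficient}, shows the right-hand side vanishes. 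Alternatively, I could condition on the other players' $\sigma$-field: the dynamics of player $i$ are driven only by $(\eta^i,W^i)$, so for fixed external randomness $\tilde\psi^i$ becomes an admissible control for the single-player problem with Brownian motion $W^i$. Then optimality in ${\mathcal A}$ applies pathwise, after disintegrating $\prod_{j\ne i}q^{*,j}_T{\mathbb P}$.

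\textbf{Main obstacle.} I expect Step 2 to be the hardest part: justifying admissibility and integrability of the deviation in the enlarged filtration. This means checking that ${\mathcal S}^{\star,(N)}(\tilde\psi^i)<\infty$ implies the local-martingale terms in the duality are true martingales under $Q^{*,N}$ (localize with stopping times $\tau_A$ as in \cite{DelarueLavigne}, using Lemma \ref{lemma:reg-X-psi-A}-type moment bounds). I would try the conditioning route first, since it reduces everything to already established single-player optimality. Combining Steps 1 and 2 then gives the claim with $\varepsilon_N$.
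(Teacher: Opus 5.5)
Your proposal is correct and follows essentially the same route as the paper. The paper likewise controls the $g_1$-part uniformly in $\tilde\psi^i$ through the Lipschitz property in \ref{assumption:g:N} and Lemma~\ref{lem:lln:2}, where your explicit $2/N$ bound for the deviating player's atom fills in a step the paper leaves implicit. It then uses convexity of $g(\cdot,\mu^*)$ and $\ell$ together with the It\^o duality of \cite[Lemma 38]{DelarueLavigne}, redone on $\Omega^{\times N}$, to show that the gradient terms cancel, and your lifted adjoint $P=\prod_{j\neq i}q^{*,j}\,p^{*,i}$ is exactly what makes that work. The conditioning alternative is not needed, and it would require an extra argument that the frozen controls $\tilde\psi^i(\cdot,\omega^{-i})$ belong to $\mathcal A$, which is not immediate from $\mathcal S^{\star,(N)}(\tilde\psi^i)<\infty$ for a general driver $f$. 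The duality route is therefore the one to keep.
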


\begin{proof}
Throughout the proof, we use the convenient notation 
$\tilde X^i := X^{i,\tilde \psi^i}$. 

By Assumption 
\ref{assumption:g:N}, we can find a constant $C$, independent of $N$, such that, for any $i \in \{1,\ldots,N\}$. 
\begin{equation*}
\begin{split}
&\left\vert \mathbb{E}^{\times N} \left[ Q_T^{*,N}
 \left(  
  g\left(\tilde X^{i}_T,\mu_T^{\psi^{*,1},\ldots,\tilde \psi^{i},\ldots,\psi^{*,N}}
\right) 
-
  g\left(\tilde X^{i}_T, (q^* {\mathbb P})_{X^*}
\right) 
\right)\right] \right\vert
\\
&\leq C \mathbb{E}^{\times N} \left[ Q_T^{*,N}
\min\left(1 , 
d_{\textrm{\rm FM}}\left( 
\frac1N \sum_{j \not =i}\delta_{X^{*,j}_T} + \frac1N \delta_{\tilde X^i_T}, (q^* {\mathbb P})_{X^*}
\right)\right)
 \right].
\end{split} 
\end{equation*}
By Lemma
\ref{lem:lln:2}, the term on the second line tends to $0$ as $N$ tends to $+ \infty$. 
Therefore, we can find a sequence $(\varepsilon_N)_{N \geq 1}$, independent of 
$\tilde \psi^i$, such that 
\begin{equation*}
\begin{split} 
&{\mathcal R}^i\left(Q^{*N},\psi^{*,1},\ldots,\psi^{*,i-1},\tilde \psi^i,\psi^{*,i+1},\ldots,\psi^{*,N}\right) 
\\
&\geq  \mathbb{E}^{\times N} \left[ Q_T^{*,N}
 \left(  
  g^i\left(\tilde X^{i}_T, (q^* {\mathbb P})_{X^*}
\right) 
\right)\right]
 + \mathbb{E}^{\times N} \left[  \int_0^T  Q_s^{*,N}  \ell^i(s,\tilde \psi_s^i)  \dd s \right]
 - \varepsilon_N. 
 \end{split}
 \end{equation*} 
By convexity properties of the function
$g^i$ in the space variable and of the function 
$\ell^i$ in the variable $\psi$, we get 
\begin{equation*}
\begin{split} 
&{\mathcal R}^i\left(Q^{*,N},\psi^{*,1},\ldots,\psi^{*,i-1},\tilde \psi^i,\psi^{*,i+1},\ldots,\psi^{*,N}\right)
\\
&\geq  \mathbb{E}^{\times N} \left[ Q_T^{*,N}
 \left(  
  g\left(X_T^{*,i},(q^* {\mathbb P})_{X^*}
\right) 
\right)\right]
  + \mathbb{E}^{\times N} \left[  \int_0^T  Q_s^{*,N}  \ell^i(s, \psi_s^{*,i})  \dd s \right]
 \\
 &\hspace{15pt} 
+
\mathbb{E}^{\times N} \left[ Q_T^{*,N}
\nabla_x  
  g\left(X^{*,i}_T, (q^* {\mathbb P})_{X^*}
\right) \cdot (\tilde X^{i}_T - X_T^{*,i} ) \right]
\\
&\hspace{15pt} 
+
\mathbb{E}^{\times N} \left[ 
 \int_0^T  Q_s^{N} \nabla_\psi \ell^i(s, \psi_s^{*,i}) 
\cdot 
(\tilde \psi_s^i 
-
\psi_s^{*,i})
 \dd s \right] - \varepsilon_N.
 \end{split} 
\end{equation*}
The rest of the proof is quite standard and just consists in verifying that the strategy $\psi^{*,i}$ is optimal. The only difficulty is that
the process $\tilde \psi^i$ is defined on the product space $\Omega^{\times N}$. A careful inspection shows that the proof of 
\cite[Lemma 38]{DelarueLavigne}, which is based on It\^o calculus arguments and from which we already derived Lemma 
\ref{lemma:sufficient}, remains the same. This shows that the sum of the third and fourth terms on the right-hand side is equal to 
$0$.

Reverting the computations, we deduce that 
\begin{equation*}
\begin{split} 
&{\mathcal R}^i\left(Q^{*,N},\psi^{*,1},\ldots,\psi^{*,i-1},\tilde \psi^i,\psi^{*,i+1},\ldots,\psi^N \right)
 \geq {\mathcal R}^i\left(Q^{*,N},\psi^{*,1},\ldots,\psi^{*,N} \right)  - \varepsilon_N.
 \end{split} 
\end{equation*}
This completes the proof. 
\end{proof}

\paragraph{Mimicking the empirical distribution under 
$\tilde Q^{N} \in {\mathcal Q}^{(N)}$}

Deviations by Nature are more difficult to understand, due to the multiple correlations that may arise when modifying
$Q^{*,N}$. To overcome this difficulty, we rely on a rewriting of the cost function, whose principle is as follows and applies only to distributed strategies.

\begin{lemma}
\label{lem:LLN:polynomials}
Let the cost $h$ in 
\eqref{eq:mathcalJN}
satisfy 
\ref{hyp:g:mu:fixed} and \ref{assumption:g:N}, and let $c>0$. Then, there exists a sequence $(\varepsilon_N)_{N \geq 1}$ converging to 
$0$ such that, for any 
distributed strategy
$(\tilde Q^N,(\tilde \psi^1,\ldots,\tilde \psi^N)) \in {\mathcal Q}^{(N)} \times [{\mathcal A}^{(N)}]^N$ satisfying $$\sup_{i=1,\ldots,N} {\mathcal S}(\tilde q^i) \leq c, \quad \sup_{i=1,\ldots,N} {\mathcal S}^{\star}(\tilde \psi^i) \leq c,$$ it holds
\begin{equation*}
\begin{split} 
&\left\vert \mathbb{E}^{\times N} \left[ \tilde Q_T^{N}
\frac1N \sum_{i=1}^N
   h \left( \tilde X_T^i,  \mu_T^{\tilde \psi^1,\ldots,\tilde \psi^N}
\right) 
 \right] \color{white} \right\vert \color{black}
\\
&\hspace{15pt} 
 \color{white} \left\vert \color{black}
-
\frac1N \sum_{i=1}^N \mathbb{E}^{\times N} \left[ \tilde q_T^i
  h \left( \tilde X_T^i,
 \frac1{N} \sum_{j =1}^N \tilde q_T^j \delta_{\tilde X_T^j} 
\right)\right]
\right\vert \color{black} \leq \varepsilon_N.
 \end{split} 
\end{equation*}
\end{lemma}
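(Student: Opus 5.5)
The plan is to split $h=h_0+h_1$ as in \ref{assumption:g:N}. The part $h_0$ does not depend on the measure, so it cancels exactly. The part $h_1$ is bounded and Lipschitz for $d_{\textrm{\rm FM}}$, and for it we compare both sides with an intermediate quantity built on a \emph{deterministic} measure,
\begin{equation*}
m^N:=\frac1N\sum_{j=1}^N (\tilde q^j_T{\mathbb P})_{\tilde X^j_T}.
\end{equation*}
The key structural fact is that the strategy is distributed. Hence $(\tilde q^j_T,\tilde X^j_T)_{j}$ are independent under ${\mathbb P}^{\times N}$, with ${\mathbb E}[\tilde q^j_T]=1$ (since $Y^\star\equiv 0$), and $\tilde Q^N_T=\prod_j\tilde q^j_T$.

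\emph{Step 1 (the $h_0$ part).} By independence, ${\mathbb E}^{\times N}[\tilde Q^N_T h_0(\tilde X^i_T)]={\mathbb E}^{\times N}[\tilde q^i_T h_0(\tilde X^i_T)]\prod_{j\neq i}{\mathbb E}[\tilde q^j_T]={\mathbb E}^{\times N}[\tilde q^i_T h_0(\tilde X^i_T)]$. The contributions of $h_0$ to both sides therefore coincide. Integrability follows from Lemma~\ref{lemma:reg-X-psi-A}, which, under the assumptions ${\mathcal S}(\tilde q^i)\le c$ and ${\mathcal S}^\star(\tilde\psi^i)\le c$, also yields the uniform moment bound ${\mathbb E}[\tilde q^i_T|\tilde X^i_T|^{2-r}]\le C$ used below.

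\emph{Step 2 (empirical measure under $\tilde Q^N{\mathbb P}$ versus $m^N$).} Under $\tilde Q^N_T{\mathbb P}^{\times N}$, the variables $\tilde X^1_T,\dots,\tilde X^N_T$ are independent with respective laws $(\tilde q^j_T{\mathbb P})_{\tilde X^j_T}$. This is the non-i.i.d. analogue of the change-of-measure argument in the proof of Lemma~\ref{lem:lln:2}. The family of these laws is tight uniformly in $j$, $N$ and the strategy, thanks to the uniform moment bound. So I would fix $\varepsilon>0$ and a compact $K$ carrying all but $\varepsilon$ of the mass. I would then approximate the unit ball of bounded $1$-Lipschitz functions on $K$ by a finite $\varepsilon$-net $\varphi_1,\dots,\varphi_M$, with $M$ independent of $N$ and of the strategy. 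A Chebyshev bound gives variance $\le 1/N$ for each $\langle\varphi_k,\mu^N_T-m^N\rangle$. This yields
\begin{equation*}
{\mathbb E}^{\times N}\bigl[\tilde Q^N_T\min(1,d_{\textrm{\rm FM}}(\mu_T^{\tilde\psi^1,\ldots,\tilde\psi^N},m^N))\bigr]\le \varepsilon_N,
\end{equation*}
uniformly in admissible strategies. Since $h_1$ is bounded and Lipschitz, it follows that ${\mathbb E}[\tilde Q^N_T h_1(\tilde X^i_T,\mu^N_T)]$ is within $C\varepsilon_N$ of ${\mathbb E}[\tilde Q^N_T h_1(\tilde X^i_T,m^N)]$. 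As $m^N$ is deterministic, the latter equals ${\mathbb E}[\tilde q^i_T h_1(\tilde X^i_T,m^N)]$ by the argument of Step~1.

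\emph{Step 3 (weighted empirical measure under $\tilde q^i{\mathbb P}$ versus $m^N$).} It remains to show that $\frac1N\sum_i{\mathbb E}\bigl[\tilde q^i_T\min\bigl(1,d_{\textrm{\rm FM}}(\nu^N,m^N)\bigr)\bigr]\to 0$ uniformly, where $\nu^N:=\frac1N\sum_j\tilde q^j_T\delta_{\tilde X^j_T}$. I expect this to be the main obstacle. Two difficulties arise. First, fluctuations of $\frac1N\sum_j\bigl(\tilde q^j_T\varphi(\tilde X^j_T)-{\mathbb E}[\tilde q^j_T\varphi(\tilde X^j_T)]\bigr)$ are now computed under ${\mathbb P}$, so their variance involves ${\mathbb E}[(\tilde q^j_T)^2]$, which is not controlled. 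Only an entropy bound is available, through \eqref{eq:H:S}. Second, $h_1$ must be evaluated at non-normalized measures; I would use its Lipschitz property for the Fortet--Mourier norm on ${\mathcal M}$, as in the proof of Theorem~\ref{thm:existence}.

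To handle the first difficulty I would truncate, replacing $\tilde q^j_T$ by $\tilde q^j_T\wedge A$. The entropy bound gives ${\mathbb E}[\tilde q^j_T{\mathds 1}_{\{\tilde q^j_T>A\}}]\le C/\ln A$ uniformly, which controls the truncation error in FM norm, even after multiplication by $\tilde q^i_T$, using independence for $j\neq i$. The term $j=i$ is not independent of the weight $\tilde q^i_T$. Its contribution to $d_{\textrm{\rm FM}}$ is at most $\tilde q^i_T/N$, and I would bound ${\mathbb E}[\tilde q^i_T\min(1,\tilde q^i_T/N)]$ again by truncation at level $A$, giving $A/N+C/\ln A$. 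For the truncated variables with $j\ne i$, which are independent of $(\tilde q^i_T,\tilde X^i_T)$, the finite-net and Chebyshev argument of Step~2 gives variance at most $A^2/N$, combined with tightness of $\tilde q^j_T{\mathbb P}\circ(\tilde X^j_T)^{-1}$. Finally, I would choose $A=A_N\to\infty$ slowly, with $A_N^2/N\to0$. Collecting Steps 1--3 gives the claimed $\varepsilon_N$, independent of the strategy.
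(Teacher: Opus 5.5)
Your proof is correct and follows essentially the paper's route. You pass through the deterministic averaged measure $\frac1N\sum_j(\tilde q^j_T\mathbb{P}^{\times N})_{\tilde X^j_T}$ (the paper's $\mu^N$, up to its spatial cutoff), get a law of large numbers uniform in the strategy under $\tilde Q^N_T\mathbb{P}^{\times N}$ from the independence that distributedness provides, and then truncate the weights $\tilde q^j_T$ with the entropy bound \eqref{eq:H:S} to run a weighted law of large numbers under $\mathbb{P}$.

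The differences are only technical. You use a finite net with Chebyshev instead of the $L^4$ bound with a dense sequence, a truncation level $A_N\to\infty$ instead of a fixed $a$, and you treat the self-term $j=i$ explicitly. Splitting off $h_0$, which cancels exactly by independence, is in fact cleaner than the paper's cutoff step \eqref{eq:lln:5}, where a bounded-Lipschitz estimate is applied to all of $h$ although only $h_1$ satisfies it.
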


\begin{proof}
Throughout the proof, $\epsilon$ denotes a fixed positive real. 
Moreover, 
we let $\varphi$ be a compactly supported function from ${\mathbb R}^n$ to 
${\mathbb R}$ that is equal to the identity on the ball $B_n(0,A)$ of center $0$ and of 
radius $A$, for a certain $A>0$, and satisfies $\vert \varphi(x) \vert \leq \vert x \vert$ for all 
$x \in {\mathbb R}^n$. With the shorthand notation
\begin{equation*} 
\hat X^i_T := \varphi(\tilde X^i_T), \quad i \in \{1,\ldots,N\}, 
\end{equation*}
we deduce from condition  
\ref{assumption:g:N} (for $h$) that, for a constant $C$ independent of $N$ and of 
$(\tilde Q^N,(\tilde \psi^1,\ldots,\tilde \psi^N))$, 
\begin{equation}
\label{eq:lln:5}
\begin{split}
&\left\vert {\mathbb E}^{\times N} \left[ \tilde Q_T^N h\left( \tilde X_T^1,\frac1{N} \sum_{i =1}^N \delta_{\tilde X_T^i}
\right) \right]  - {\mathbb E}^{\times N} \left[ \tilde Q_T^N h\left( \hat X_T^1,\frac1{N} \sum_{i =1}^N \delta_{\hat X^i_T}
\right) \right] \right\vert
\\
&\leq C 
{\mathbb E}^{\times N} \left[ \tilde Q_T^N \left(  1 \wedge \vert \tilde X_T^1 - \hat X_T^1 \vert  + \frac1{N} \sum_{i=1}^N 
1 \wedge \vert \tilde X_T^i - \hat X_T^i \vert \right) \right]
\\
&\leq  C {\mathbb E}^{\times N} \left[ \tilde Q_T^N  {\mathds 1}_{\{ \vert \tilde X_T^1 \vert \geq A\}} + \frac1{N} \sum_{i=1}^N 
\tilde Q_T^N  {\mathds 1}_{\{ \vert \tilde X_T^i \vert \geq A\}}  \right].
\end{split}
\end{equation}
Because the strategy 
$\tilde Q^N$ is distributed, we have
\begin{equation*} 
{\mathbb E}^{\times N} \left[   
\tilde Q_T^N  {\mathds 1}_{\{ \vert \tilde X_T^i \vert \geq A\}}  \right]
= {\mathbb E}^{\times N} \left[    
\tilde q_T^i  {\mathds 1}_{\{ \vert \tilde X_T^i \vert \geq A\}}  \right].
\end{equation*} 
Since ${\mathcal S}(\tilde q^i) \leq c$ and ${\mathcal S}^\star (\tilde \psi^i) \leq c$, 
we can choose $A$ large enough, only depending on 
$\epsilon$ and $c$, such that the right-hand side is less than $\epsilon/(2C)$; see Lemma 
\ref{lemma:reg-X-psi-A}. 

Next, we 
follow the proof of Lemma 
\ref{lem:lln:2}
and
consider an $N$-tuple $(Y_1,\ldots,Y_N)$ of independent random variables, 
constructed on $(\Omega,{\mathcal F},{\mathbb P})$, such that 
$Y_i \sim (\tilde q^i_T {\mathbb P}^{\times N})_{\hat{X}_T^i}$ for each 
$i \in \{1,\ldots,N\}$. 
We have 
\begin{equation*}
\begin{split}
&{\mathbb E}^{\times N} \left[ \tilde Q_T^N h\left( \hat X^1,\frac1{N} \sum_{i =1}^N \delta_{\hat X^i_T}
\right) \right]
 = {\mathbb E} \left[ h\left(Y^1,\frac1{N} \sum_{i =1}^N \delta_{Y^i}\right)
\right].  
\end{split}
\end{equation*} 
Following the standard $L^4$-proof of the  law of large numbers, we can find a universal constant $C$ such that, for any real-valued function $\varphi$  that is bounded by $1$ and $1$-Lipschitz continuous on the ball 
$B_n(0,A)$, 
\begin{equation}
\label{eq:lln:4}
{\mathbb E} \left[ \left\vert 
\frac1{N} \sum_{i =1}^N \varphi(Y^i) 
- 
\int_{{\mathbb R}^n} \varphi(x) \dd \mu^N(x)
\right\vert^4 \right]  \leq \frac{C}{N^2},
\end{equation} 
where 
\begin{equation*}
\mu^N = \frac1N \sum_{i=1}^N (\tilde q^i_T {\mathbb P}^{\times N})_{\hat{X}_T^i}. 
\end{equation*}
Call now $(\varphi_k)_{k \geq 1}$ a sequence that is dense (for the sup norm topology on the ball $B_n(0,A)$) in the set of real-valued functions that are bounded by $1$ and $1$-Lipschitz continuous on the ball 
$B_n(0,A)$. 
We deduce from 
the above bound (together with Markov inequality) that, ${\mathbb P}$-a.s., 
\begin{equation*}
\lim_{N \rightarrow \infty} \left\vert \frac1{N} \sum_{i =1}^N \varphi_k(Y^i) 
-
\int_{{\mathbb R}^n} \varphi_k(x) \dd \mu^N(x) \right\vert=0.
\end{equation*} 
And then, using the compactness 
of the collection of real-valued functions on $B_n(0,A)$ that are bounded by $1$ and $1$-Lipschitz continuous, we deduce that, ${\mathbb P}$-a.s.,  
\begin{equation*}
\lim_{N \rightarrow \infty} d_{\textrm{\rm FM}} 
\left( \frac1{N} \sum_{i =1}^N \delta_{Y^i},   \mu^N  \right)=0.
\end{equation*} 
Since the constant $C$ in 
\eqref{eq:lln:4}
is universal, it is easy to see that 
the rate is independent of 
$\tilde Q^N$, in the sense that the rate at which 
the sequence 
\begin{equation*}
{\mathbb P} \left( \left\{ 
d_{\textrm{\rm FM}} 
\left( \frac1{N} \sum_{i =1}^N \delta_{Y^i},   \mu^N  \right) > \varepsilon \right\}
\right) 
\end{equation*}
tends to $0$, for any given $\varepsilon >0$, is independent of $\tilde Q^N$. 

Combining with 
\eqref{eq:lln:5}, 
we deduce that there exists a sequence 
$(\varepsilon_N)_{N \geq 1}$, as in the statement, but depending on $A$, such that 
\begin{equation*}
\begin{split}
&\left\vert {\mathbb E}^{\times N} \left[ \tilde Q_T^N h\left( \tilde X_T^1,\frac1{N} \sum_{i =1}^N \delta_{\tilde X_T^i}
\right) \right]  - {\mathbb E}  \left[     h\left( Y^1,\mu^N
\right) \right] \right\vert\leq \epsilon+ \varepsilon_N,
\end{split}
\end{equation*} 
which we rewrite as
\begin{equation*}
\begin{split}
&\left\vert {\mathbb E}^{\times N} \left[ \tilde Q_T^N h\left( \tilde X_T^1,\frac1{N} \sum_{i =1}^N \delta_{\tilde X_T^i}
\right) \right]  - {\mathbb E}^{\times N}  \left[ \tilde q_T^1   h\left( \hat X_T^1,\mu^N
\right) \right] \right\vert\leq \epsilon + \varepsilon_N.
\end{split}
\end{equation*} 
For a given $a>0$, 
we now let 
\begin{equation*} 
\hat q^i_T := q^i_T {\mathds 1}_{\{ \tilde q^i_T \leq a\}}, \quad i =1,\ldots,N,
\end{equation*}
and then,
\begin{equation*}
\hat{\mu}^N := \frac1N \sum_{i=1}^N (\hat q^i_T {\mathbb P}^{\times N})_{\hat{X}_T^i}. 
\end{equation*}
We have
\begin{equation*}
d_{\textrm{\rm FM}}
\left( \hat{\mu}^N,\mu^N \right) \leq  \frac1N \sum_{i=1}^N
{\mathbb E}^{\times N} 
\left[ \tilde q^i_T {\mathds 1}_{\{ 
\tilde q^i_T \geq a\}}
\right].
\end{equation*} 
And, 
by 
\eqref{eq:H:S}, 
we can choose $a$ large enough, only depending on $c$, such that the right-hand side is 
less than $\epsilon/C$, where $C$ is the Lipschitz constant of $h$ with respect to  the
Fortet-Mourier distance. This shows that 
\begin{equation*}
\begin{split}
&\left\vert {\mathbb E}^{\times N} \left[ \tilde Q_T^N h\left( \tilde X_T^1,\frac1{N} \sum_{i =1}^N \delta_{\tilde X_T^i}
\right) \right]  - {\mathbb E}^{\times N}  \left[ \tilde q_T^1   h\left( \hat X_T^1,\hat \mu^N
\right) \right] \right\vert\leq 2 \epsilon + \varepsilon_N.
\end{split}
\end{equation*} 
Following 
\eqref{eq:lln:4}, we can find another constant, still denoted by $C$, only depending on $a$, such that, for any real-valued function $\varphi$ on the ball 
$B_n(0,A)$, bounded by $1$ and $1$-Lipschitz continuous, 
\begin{equation}
\label{eq:lln:4b}
{\mathbb E} \left[ \left\vert 
\frac1{N} \sum_{i =1}^N \hat{q}^i_T \varphi(\hat{X}_T^i) 
- 
\int_{{\mathbb R}^n} \varphi(x) \dd \hat{\mu}^N(x)
\right\vert^4 \right]  \leq \frac{C}{N^2}. 
\end{equation} 
Proceeding as before, 
this shows that, for any $\varepsilon >0$, 
the sequence 
\begin{equation*}
{\mathbb P} \left( \left\{ 
d_{\textrm{\rm FM}} 
\left( \frac1{N} \sum_{i =1}^N \hat{q}^i_T \delta_{\hat{X}_T^i},  \hat{\mu}^N  \right) > \varepsilon \right\}
\right) 
\end{equation*}
tends to $0$, with a rate that is independent of $\tilde Q^N$. 
And then, 
for a possibly new choice of 
the sequence $(\varepsilon_N)_{N \geq 1}$, 
\begin{equation*}
\begin{split}
&\left\vert {\mathbb E}^{\times N} \left[ \tilde Q_T^N h\left( \tilde X_T^1,\frac1{N} \sum_{i =1}^N \delta_{\tilde X_T^i}
\right) \right]  - {\mathbb E}^{\times N}  \left[ \tilde q_T^1   h\left( \hat X_T^1, \frac1{N} \sum_{i =1}^N \hat{q}^i_T \delta_{\hat{X}_T^i}
\right) \right] \right\vert\leq 2 \epsilon + \varepsilon_N. 
\end{split}
\end{equation*} 
It remains to see from condition  
\ref{assumption:g:N} that, for a new constant $C$, 
\begin{equation*}
\begin{split}
&\left\vert {\mathbb E}^{\times N} \left[\tilde q_T^1   h\left( \hat X_T^1, \frac1{N} \sum_{i =1}^N \tilde{q}^i_T \delta_{\hat{X}_T^i}
\right)  \right]  - {\mathbb E}^{\times N}  \left[ \tilde q_T^1   h\left( \hat X_T^1, \frac1{N} \sum_{i =1}^N \hat{q}^i_T \delta_{\hat{X}_T^i}
\right) \right] \right\vert
\\
&\leq C  {\mathbb E}^{\times N} \left[\tilde q_T^1\min\left(1, \frac1N \sum_{i=1}^N \vert  \tilde{q}^i_T - 
 \hat{q}^i_T\vert \right)\right]
 \\
 &\leq  C {\mathbb E}^{\times N} \left[\tilde q_T^1\min\left(1, \frac1N \sum_{i=1}^N   \tilde{q}^i_T 
 {\mathds 1}_{\{ 
 \tilde{q}^i_T\geq a\}} \right)\right].
\end{split}
\end{equation*}
And, thanks to \eqref{eq:H:S}, we can increase the value of $a$, only in function of $c$, so that the right-hand side is less than 
$\epsilon$. This gives
\begin{equation*}
\begin{split}
&\left\vert {\mathbb E}^{\times N} \left[ \tilde Q_T^N h\left( \tilde X_T^1,\frac1{N} \sum_{i =1}^N \delta_{\tilde X_T^i}
\right) \right]  - {\mathbb E}^{\times N}  \left[ \tilde q_T^1   h\left( \hat X_T^1, \frac1{N} \sum_{i =1}^N \tilde{q}^i_T \delta_{\hat{X}_T^i}
\right) \right] \right\vert\leq 3 \epsilon + \varepsilon_N. 
\end{split}
\end{equation*} 
Arguing in the same way, we can substitute $\tilde X_T^i$ for 
$\hat{X}_T^i$ in the above display, assuming that $A$ is large enough and adding 
a new $\epsilon$ in the right-hand side. Substituting $(\tilde X^j_T,\hat X^j_T)$ for 
$(\tilde X^1_T,\hat X^1_T)$, for any $j=2,\ldots,N$, and averaging 
over the indices $j \in \{1,\ldots,N\}$, we complete the proof.
\end{proof}

Lemma~\ref{lem:LLN:polynomials}
leads us to introduce a \textit{surrogate reward} for nature:

\begin{definition}
Given $(Q^N,(\psi^1,\ldots,\psi^N)) \in {\mathcal Q}^{(N)} \times [{\mathcal A}^{(N)}]^N$, we define the
surrogate reward for nature as
\begin{equation*}
\begin{split}
{\mathcal J}_{\textrm{\rm surrog}}^{(N)}\left(Q^N,\psi^1,\ldots,\psi^N\right) &:=
{\mathbb E}^{\times N}\left[  \sum_{i=1}^N q_T^i h^i \left(X_T^{i,\psi^i}, \frac1{N} \sum_{j=1}^N 
q_T^j \delta_{X_T^{j,\psi^j}} 
\right)
\right] 
\\
&\hspace{15pt} 
+ 
{\mathbb E}^{\times N} \left[   
 \sum_{i=1}^N \int_0^T q_s^i \left( \ell^i( s,\psi_s^i)-  f^{\star,i}(s,Z_s^{\star,i})
 \right) 
\dd s
\right]. 
\end{split} 
\end{equation*} 
\end{definition}
Lemma~\ref{lem:LLN:polynomials}
ensures that, for distributed strategies
$(Q^N,(\psi^1,\ldots,\psi^N))$, 
the costs
${\mathcal J}_{\textrm{\rm surrog}}^{(N)}\left(Q^N,\psi^1,\ldots,\psi^N\right)$
and
${\mathcal J}^{(N)}\left(Q^N,\psi^1,\ldots,\psi^N\right)$
are asymptotically close as $N \to \infty$.
Although this result is restricted to distributed strategies, we focus below on the surrogate 
reward, even for non-distributed strategies. Implicitly, this leads to the construction of approximate Nash equilibria, but for the surrogate game. 
When the game is restricted to distributed strategies,  approximate equilibria of the surrogate game are also approximate equilibria of the original game. 

When the strategy derives from a mean-field equilibrium, the empirical measure in the 
surrogate reward is governed by the following form of large of large numbers, which can be established as in the second part of the proof of Lemma \ref{lem:LLN:polynomials}: 
\begin{lemma}
\label{lem:lln:1}
Let $(q^i,X^i)_{i \geq 1}$ be an I.I.D sequence with the law of $(q,X)$ as common distribution on 
$(0,+\infty) \times {\mathbb R}^n$(the 
sequence being constructed on $(\Omega,{\mathcal F},{\mathbb P})$), where ${\mathbb E}[q]=1$. Then, ${\mathbb P}$-almost surely, 
\begin{equation*}
\lim_{N \rightarrow + \infty} 
\frac1N \sum_{i=1}^N q^i \delta_{X^i} = (q {\mathbb P})_X,
\end{equation*}
the limiting being understood for the narrow convergence. 
\end{lemma}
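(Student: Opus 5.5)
The plan is to follow the route indicated just before the statement, namely the second part of the proof of Lemma \ref{lem:LLN:polynomials}. This means truncating both the weight $q$ and the position $X$, applying an $L^4$ law of large numbers to countably many bounded Lipschitz test functions, and then removing the truncation using the uniform integrability of $q$. Since all measures involved have mass at most of order one, narrow convergence can be tested against a countable family. Let $(\varphi_k)_{k\geq1}$ be dense, for the supremum norm on compacts, in the set of functions bounded by $1$ and $1$-Lipschitz, and add $\varphi_0\equiv 1$ to it. The goal is then to show that, ${\mathbb P}$-a.s., for every $k$,
\begin{equation*}
\frac1N\sum_{i=1}^N q^i\varphi_k(X^i)\longrightarrow {\mathbb E}[q\varphi_k(X)].
\end{equation*}
The conclusion follows from this, because convergence of the total mass (via $k=0$) together with convergence on a family convergence-determining for the Fortet--Mourier norm yields convergence in $\|\cdot\|_{\textrm{\rm FM}}$. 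That norm metrizes the narrow topology on sets of uniformly bounded mass, and tightness of the empirical measures follows from the mass convergence plus the same identity applied to $\varphi(x)=1\wedge (|x|-A)_+$.

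For a fixed bounded $\varphi$, the most direct route is the strong law of large numbers for the i.i.d. integrable real variables $q^i\varphi(X^i)$. These have finite mean because ${\mathbb E}[q]=1$ and $|\varphi|\le1$, so Kolmogorov's SLLN already gives almost sure convergence. A countable intersection of full-measure events then handles all $k$ simultaneously. This avoids any fourth moment on $q$, which is not available in general.

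If one insists on mirroring the $L^4$ argument of Lemma \ref{lem:LLN:polynomials} instead, set $\hat q^i=q^i{\mathds 1}_{\{q^i\le a\}}$. The $L^4$ bound $C a^4/N^2$ together with Borel--Cantelli gives a.s. convergence of $\frac1N\sum\hat q^i\varphi_k(X^i)$ to ${\mathbb E}[\hat q\varphi_k(X)]$. The remainder satisfies $\frac1N\sum q^i{\mathds 1}_{\{q^i>a\}}\to{\mathbb E}[q{\mathds 1}_{\{q>a\}}]$ a.s. by the same truncation applied to the indicator, or simply by the SLLN. Letting $a\to\infty$ along integers concludes.

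The main obstacle is passing from convergence against countably many test functions to narrow convergence. This requires controlling mass escaping to infinity, and I would handle it by the tightness argument above using $\varphi_0$ together with the functions $1\wedge(|x|-A)_+$ for integer $A$, all of which are included in the countable family.
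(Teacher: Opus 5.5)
Your proof is correct, but it takes a different route from the one the paper indicates.

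The paper only says the lemma follows as in the second part of the proof of Lemma~\ref{lem:LLN:polynomials}. That route truncates the weights at level $a$ and the positions at radius $A$. It then uses the $L^4$ bound $C/N^2$ for bounded summands, with Markov's inequality and Borel--Cantelli, on a countable dense family of test functions. Compactness of the bounded $1$-Lipschitz functions on a ball gives Fortet--Mourier convergence, and the truncations are removed at the end.

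You instead apply Kolmogorov's strong law directly to the i.i.d. integrable variables $q^i\varphi_k(X^i)$, which needs nothing beyond ${\mathbb E}[q]=1$. You then upgrade convergence on the countable family to narrow convergence by approximating on balls. Including the constant $1$ and the cut-offs $1\wedge(|x|-A)_+$ in that family gives you mass convergence and tightness.

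What the $L^4$ route buys, in Lemma~\ref{lem:LLN:polynomials}, is rates that are uniform over the laws of the summands. That uniformity is needed there because the summands form a triangular array of independent, non-identically distributed variables depending on $\tilde Q^N$. For a fixed i.i.d. sequence no such uniformity is required.

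Moreover, with merely integrable weights and an almost sure conclusion, the truncation of $q$ must be undone pathwise. That means controlling $\frac1N\sum_{i=1}^N q^i{\mathds 1}_{\{q^i>a\}}$ almost surely. This tail variable is unbounded, so the $L^4$ argument does not reach it, despite your phrase ``by the same truncation applied to the indicator''. One needs an almost sure law of large numbers for integrable variables anyway, for instance for $\frac1N\sum_{i=1}^N q^i$. Your direct appeal to the SLLN is therefore the more economical route here, and effectively an unavoidable ingredient of the $L^4$ route as well.
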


\paragraph{Nature locally deviating from the mean equilibrium.} 
In this paragraph, we choose $h = g$ in
\eqref{eq:mathcalJN}.

Our goal is to show that, in this case, the surrogate reward cannot increase as a result of a
local deviation, that is, when only the weight $q^i$ corresponding to the noise $W^i$ to which player $i$ is subjected is modified.
Below, we denote by 
${\mathcal Q}[i]$
the collection of Doléans-Dade exponentials
$\tilde q^i$ of
the form 
$(\tilde q^i_t  :={\mathcal E}_t(\int_0^\cdot \tilde Z_s^{\star,i} \cdot \dd W_s^i))_{t \in [0,T]}$
with $\tilde Z^{\star,i}$
being ${\mathbb F}^N$-progressively measurable and satisfying 
\begin{equation*}
{\mathcal S}(\tilde q^i) 
:= {\mathbb E}
\int_0^T \tilde q^i_s 
f^{\star,i}(s,\tilde Z_s^{\star,i}) \dd s < + \infty.
\end{equation*}

\begin{lemma}
\label{lem:27}
Let $c>0$. There exists a sequence 
$(\varepsilon_N)_{N>0}$ converging to $0$ such that, for any 
$i \in \{1,\ldots,N\}$ and any $\tilde q^i \in {\mathcal Q}[i]$ satisfying 
${\mathcal S}(\tilde q^i) \leq c$, 
\begin{equation*}
\begin{split}
{\mathcal J}_{\textrm{\rm surrog}}^{(N)}\left(\tilde Q^{*,N},\psi^{*,1},\ldots,\psi^{*,N}\right) &\leq 
{\mathcal J}_{\textrm{\rm surrog}}^{(N)}\left(  Q^{*,N},\psi^{*,1},\ldots,\psi^{*,N}\right)
+N \varepsilon_N,
\end{split} 
\end{equation*} 
with 
\begin{equation*}
\tilde Q^N_T = q^{*,1}_T \ldots q^{*,i-1}_T \tilde q^i_T q^{*,i+1}_T \ldots q_T^{*,N}.  
\end{equation*}
\end{lemma}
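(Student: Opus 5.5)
Write $\tilde q^i$ for the deviation and $q^{*,j}$ for the mean-field weights. Only the $i$-th factor changes, so in ${\mathcal J}_{\textrm{\rm surrog}}^{(N)}$ the terms that move are of two kinds. The first is the $i$-th summand
\[
{\mathbb E}^{\times N}\Bigl[\tilde q_T^i\, g\bigl(X_T^{*,i},\tilde\mu^N\bigr)+\int_0^T \tilde q_s^i\bigl(\ell^i(s,\psi_s^{*,i})-f^{\star,i}(s,\tilde Z_s^{\star,i})\bigr)\dd s\Bigr].
\]
The second consists of the $j\neq i$ summands, where $\tilde q^i$ enters only through the empirical weighted measure
\[
\tilde\mu^N:=\tfrac1N\Bigl(\sum_{j\neq i}q_T^{*,j}\delta_{X_T^{*,j}}+\tilde q_T^i\delta_{X_T^{*,i}}\Bigr).
\]
The plan is to show that, up to $o(N)$ errors, this difference is the difference of Nature's mean-field reward at $\tilde q^i$ and at $q^{*,i}$, with $\mu$ frozen at the equilibrium $\mu^*=(q^*_T{\mathbb P})_{X_T^*}$. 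It is then $\le 0$ because $q^*$ is Nature's optimal response in \eqref{pb:optim-mfg} for this $\mu^*$, by Theorem \ref{theorem:SMP}.

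\emph{Step 1: replace empirical measures by $\mu^*$.} Define $\mu^{*,N}:=\frac1N\sum_j q_T^{*,j}\delta_{X_T^{*,j}}$. By \ref{assumption:g:N}, $g_1$ is Lipschitz in the measure for $d_{\textrm{\rm FM}}$, and $g_0$ does not depend on the measure. Hence
\[
\bigl|g(x,\tilde\mu^N)-g(x,\mu^{*,N})\bigr|\le \tfrac CN\,\bigl(\tilde q_T^i+q_T^{*,i}\bigr).
\]
Summed over all $N$ players, with weights $q_T^{*,j}$ (or $\tilde q_T^i$ for $j=i$), this gives an error
\[
C\,{\mathbb E}^{\times N}\Bigl[\Bigl(\tfrac1N\sum_j q_T^{*,j}\Bigr)(\tilde q_T^i+q_T^{*,i})\Bigr]+\tfrac CN\,{\mathbb E}\bigl[\tilde q_T^i(\tilde q_T^i+q_T^{*,i})\bigr].
\]
Cross terms factorize when $j\neq i$ only if the weights are independent. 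For $j\neq i$, $\tilde q^i$ may be ${\mathbb F}^N$-measurable and hence correlated with $q^{*,j}$. So instead I would use boundedness of $g_1$: the cross contribution is at most $C\,{\mathbb E}[q^{*,j}_T\min(1,\ldots)]$, and I would control it using ${\mathbb E}[q]=1$ together with entropy bounds \eqref{eq:H:S} (truncating $\tilde q^i$ at level $a$ as in Lemma \ref{lem:LLN:polynomials}). The total is $O(1)$, hence $\le N\varepsilon_N$. Next, by Lemma \ref{lem:lln:1} and the Lipschitz property of $g_1$ in the measure, replacing $\mu^{*,N}$ by $\mu^*$ in every summand costs
\[
C\sum_j{\mathbb E}\bigl[q_T^{*,j}\min\bigl(1,d_{\textrm{\rm FM}}(\mu^{*,N},\mu^*)\bigr)\bigr],
\]
and likewise with $\tilde q^i$ in the $i$-th slot. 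The $q^{*,j}$ part is $N\cdot o(1)$ by the $L^4$ law of large numbers used in Lemma \ref{lem:LLN:polynomials}, after truncating the weights. The $\tilde q^i$ part is $o(1)$ uniformly over ${\mathcal S}(\tilde q^i)\le c$, via the truncation at level $a$ and Hölder/entropy inequalities (Pinsker-type or Young's inequality $xy\le \textrm{\rm Ent}(x)+e^y$ with $y$ a multiple of the indicator of a small-probability set).

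\emph{Step 2: reduce to the single-agent problem.} After Step 1, the $j\neq i$ summands no longer depend on $\tilde q^i$ and cancel in the difference. What remains is
\[
{\mathcal J}[\mu^*](\psi^{*,i},\tilde q^i)-{\mathcal J}[\mu^*](\psi^{*,i},q^{*,i}),
\]
computed on the product space with $Y^\star\equiv0$. Here $\tilde q^i$ is ${\mathbb F}^N$-adapted rather than adapted to $(\eta^i,W^i)$, so the one-agent optimality cannot be quoted verbatim. Instead I would rerun the concavity argument of Proposition \ref{prop:main:estimate}, Step 1, on the product filtration. Apply Itô's formula to $\tilde q^i Y^{*,i}$, where $(Y^{*,i},Z^{*,i})$ solves \eqref{optim:condition-dual} driven by $W^i$. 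Use the identity $\nabla_z f(Z^{*,i})=Z^{\star,*,i}$ together with Fenchel's inequality $f^\star(\tilde z)\ge f^\star(z^\ast)+Z\cdot(\tilde z-z^\ast)$, localizing with the stopping times $\tau_A$ as in \cite{DelarueLavigne}. The independent Brownian motions $W^j$, $j\neq i$, do not interfere: $Y^{*,i}$ has no martingale component along them, and $\tilde q^i$ only integrates against $W^i$. This yields the difference $\le0$.

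\emph{Main obstacle.} The uniform-in-$\tilde q^i$ control in Step 1 is the hard part. Without independence between $\tilde q^i$ and the other factors, the empirical measure under $\tilde Q^N$ is not governed by the law of large numbers. I expect to need a change-of-measure bound of the form
\[
{\mathbb E}[\tilde q^i_T\,\xi]\le C(1+c)/\ln\bigl(1/{\mathbb E}[\xi]\bigr)
\]
for $[0,1]$-valued $\xi$, obtained from the entropy bound \eqref{eq:H:S} and Donsker--Varadhan. Applying it with $\xi=\min\bigl(1,d_{\textrm{\rm FM}}(\mu^{*,N},\mu^*)\bigr)$, whose ${\mathbb P}$-expectation tends to $0$, gives a rate independent of $\tilde q^i$.
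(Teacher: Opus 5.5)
Your proposal is correct and follows essentially the paper's route. You replace the weighted empirical measure by $\mu^*=(q^*_T{\mathbb P})_{X^*_T}$ in every summand at total cost $N\varepsilon_N$, using the $\frac1N\vert\tilde q^i_T-q^{*,i}_T\vert$ perturbation, Lemma~\ref{lem:lln:1} and \eqref{eq:H:S}, and then conclude by the optimality of $q^{*,i}$ at $\mu^*$ via the It\^o expansion of \cite[Lemma 30]{DelarueLavigne} on the product filtration. Your Donsker--Varadhan bound makes explicit the uniformity in $\tilde q^i$ that the paper only asserts. At the $N\varepsilon_N$ scale, however, the $\tilde q^i$-weighted error in the $i$-th summand only needs to be $o(N)$, so the trivial bound $C\,{\mathbb E}[\tilde q^i_T]\le C$ already suffices and your ``main obstacle'' is not really one.
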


\begin{proof}
By Lipschitz property \ref{assumption:g:N} of $g$, 
\begin{equation*}
\begin{split} 
& \mathbb{E}^{\times N} \left[ \tilde q_T^i
\left( 
  g\left( \tilde X_T^i,
 \frac1{N} \sum_{j \not =i}    q_T^{*,j} \delta_{X_T^{*,j}} + 
 \frac1N \tilde q_T^i 
 \delta_{X_T^{*,i}}
\right)
-
  g\left(\tilde X_T^i,
(q^* {\mathbb P})_{X_T^*}
\right)
\right) 
 \right]
\\
&\leq  C \mathbb{E}^{\times N} \left[ \tilde q_T^i
 \min \left( 1, d_{\textrm{\rm FM}}
\left(
 \frac1{N} \sum_{j=1}^N    q_T^{*,j} \delta_{X_T^{*,j}} , 
(q^* {\mathbb P})_{X_T^*}
\right)\right)  
\right]
\\
&\hspace{15pt} 
+ C \mathbb{E}^{\times N} \left[\min \left( 1, \frac1N 
\vert q_T^{*,i} - \tilde q_T^i \vert
\right)   
\right].
 \end{split} 
\end{equation*}
By Lemma 
\ref{lem:lln:1}, we know that ${\mathbb P}$-a.s., 
\begin{equation*}
\lim_{N \rightarrow \infty} 
d_{\textrm{\rm FM}}
\left(
 \frac1{N} \sum_{j =1}^N    q_T^{*,j} \delta_{X_T^{*,j}} , 
(q^* {\mathbb P})_{X_T^*}
\right)
=0. 
\end{equation*} 
Since ${\mathcal S}(\tilde q^i) \leq c$, we can
use 
\eqref{eq:H:S} to find a sequence   
$(\varepsilon_N)_{N>0}$ converging to $0$, only depending on 
$\tilde q^i$ via $c$, 
 such that
 \begin{equation*}
\begin{split} 
& \mathbb{E}^{\times N} \left[ \tilde q_T^i
\left( 
  g\left( \tilde X_T^i,
 \frac1{N} \sum_{j =1}^N    q_T^{*,j} \delta_{X_T^{*,j}} 
\right)
-
  g\left(\tilde X_T^i,
(q^* {\mathbb P})_{X_T^*}
\right)
\right) 
 \right]
 \leq  \varepsilon_N.
 \end{split} 
\end{equation*}
Of course, we can proceed similarly with the coordinates 
$k \not = i$. 

Hence, 
denoting by $\tilde Z^{\star,i}$ the representative of 
$\tilde q^i$, i.e. $\tilde q^i = {\mathcal E}_{\cdot}(\int_0^{\cdot} 
\tilde Z^{\star,i}_s \cdot \dd W_s^i)$, 
we obtain 
\begin{equation*}
\begin{split}
{\mathcal J}_{\textrm{\rm surrog}}^{(N)}\left(\tilde Q^N,\psi^{*,1},\ldots,\psi^{*,N}\right) 
&\leq   \mathbb{E}^{\times N} \left[ \tilde q_T^i
  g\left( X_T^{*,i},
(q_T^* {\mathbb P})_{X^*_T}
\right) \right] 
\\
&\hspace{15pt} + 
{\mathbb E}^{\times N} \left[   
  \int_0^T \tilde q_s^i \left( \ell^i( s,\psi_s^{*,i})-  f^{\star,i}(s,\tilde Z_s^{\star,i})
 \right) 
\dd s
\right] 
\\
&\hspace{15pt} + \sum_{j \not = i} 
\left\{
\mathbb{E}^{\times N} \left[   q_T^{*,j}
  g\left( X_T^{*,j},
(q_T^* {\mathbb P})_{X^*_T}
\right) \right] 
\color{white} \int_0^T \right\} \color{black}
\\
&\hspace{7pt} 
\color{white} \left\{ \color{black}
+ 
{\mathbb E}^{\times N} \left[   
  \int_0^T   q_s^{*,j} \left( \ell^j( s,\psi_s^{*,j})-  f^{\star,i}(s, Z_s^{\star,*,j})
 \right) 
\dd s
\right] \right\} \color{black} 
\\
&\hspace{15pt} + N \varepsilon_N. \end{split} 
\end{equation*}
To handle the first two terms on the right-hand side, we use the optimality of 
$q^{*,i}$. As in the proof of Lemma 
\ref{lem:23}, the main subtlety comes from the fact that the probability space 
is not supported by $\Omega$
but by 
$\Omega^{\times N}$. That said, we can apply the same It\^o expansion 
as in the proof of \cite[Lemma 30]{DelarueLavigne} (see in particular the last display in the proof) to show that 
\begin{equation*}
\begin{split}
&\mathbb{E}^{\times N} \left[ \tilde q_T^i
  g\left( X_T^{*,i},
(q_T^* {\mathbb P})_{X^*_T}
\right) \right] 
 + 
{\mathbb E}^{\times N} \left[   
  \int_0^T \tilde q_s^i \left( \ell^i( s,\psi_s^{*,i})-  f^{\star,i}(s,\tilde Z_s^{\star,i})
 \right) 
\dd s
\right] 
\\
&\leq \mathbb{E}^{\times N} \left[ q_T^{*,i}
  g\left( X_T^{*,i},
(q_T^* {\mathbb P})_{X^*_T}
\right) \right] 
 + 
{\mathbb E}^{\times N} \left[   
  \int_0^T  q_s^{*,i} \left( \ell^i( s,\psi_s^{*,i})-  f^{\star,i}(s,Z_s^{\star,*,i})
 \right) 
\dd s
\right],
\end{split}
\end{equation*}
from which we deduce that  
\begin{equation*}
\begin{split}
{\mathcal J}_{\textrm{\rm surrog}}^{(N)}\left(\tilde Q^N,\psi^{*,1},\ldots,\psi^{*,N}\right) 
&\leq    \sum_{j =1}^N \mathbb{E}^{\times N} \left[   q_T^{*,j}
  g\left( X_T^{*,j},
(q_T^* {\mathbb P})_{X^*_T}
\right) \right] 
\\
&\hspace{15pt} + 
{\mathbb E}^{\times N} \left[   
  \int_0^T   q_s^{*,j} \left( \ell^j( s,\psi_s^{*,j})-  f^{\star,i}(s, Z_s^{\star,*,j})
 \right) 
\dd s
\right] 
\\
&\hspace{15pt} + N \varepsilon_N. 
\end{split} 
\end{equation*}
Repeating the computations, but with $q^{*,i}$ substituted for 
$\tilde q^i$, we get 
\begin{equation*}
\begin{split}
{\mathcal J}_{\textrm{\rm surrog}}^{(N)}\left(\tilde Q^N,\psi^{*,1},\ldots,\psi^{*,N}\right) 
&\leq {\mathcal J}_{\textrm{\rm surrog}}^{(N)}\left(Q^{*,N},\psi^{*,1},\ldots,\psi^{*,N}\right)     + N \varepsilon_N. 
\end{split} 
\end{equation*}
This completes the proof. 
\end{proof}

\paragraph{Nature globally deviating when the game is potential} 

In this paragraph, we assume that there exists 
a smooth function $G : {\mathcal M}({\mathbb R}^n) \rightarrow {\mathbb R}$ such that 
\begin{equation*}
g_1(x,\mu) = \frac{\delta G}{\delta \mu}(\mu,x) := 
\frac{d}{d \varepsilon}\vert_{\varepsilon = 0+} 
G\left( \mu + \varepsilon \delta _x \right), \quad x \in {\mathbb R}^n. 
\end{equation*}
We still assume 
\ref{hyp:g:mu:fixed} and
\ref{assumption:g:N}.

We choose, as surrogate cost, 
\begin{equation*}
\begin{split}
{\mathcal J}_{\textrm{\rm surrog}}^{(N)}\left(Q^N,\psi^1,\ldots,\psi^N\right) &:=
{\mathbb E}^{\times N}\left[  
\sum_{i=1}^N q_T^i g_0(X_T^i) + 
N G \left( \frac1{N} \sum_{j=1}^N 
q_T^j \delta_{X_T^{j,\psi^j}} 
\right)
\right] 
\\
&\hspace{15pt} 
+ 
{\mathbb E}^{\times N} \left[   
 \sum_{i=1}^N \int_0^T q_s^i \left( \ell^i( s,\psi_s^i)-  f^{\star,i}(s,Z_s^{\star,i})
 \right) 
\dd s
\right]. 
\end{split} 
\end{equation*}

\begin{lemma}
\label{lem:31}
Assume that the function 
$G$ is flat concave on the cone of non-negative measures. 
Then, for any constant $c>0$, there exists a sequence 
$(\varepsilon_N)_{N \geq 1}$, converging to $0$ such that, for any   $\tilde Q^N \in {\mathcal Q}^{(N)}$, 
with 
$\sup_{i=1,\ldots,N} {\mathcal S}(\tilde q^i) \leq c$, 
\begin{equation*}
\begin{split}
{\mathcal J}_{\textrm{\rm surrog}}^{(N)}\left(\tilde Q^N,\psi^{*,1},\ldots,\psi^{*,N}\right) &\leq 
{\mathcal J}_{\textrm{\rm surrog}}^{(N)}\left( Q^{*,N},\psi^{*,1},\ldots,\psi^{*,N}\right)
+N \varepsilon_N,
\end{split} 
\end{equation*}

\end{lemma}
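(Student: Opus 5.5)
Let $\mu^* := (q^*_T{\mathbb P})_{X^*_T}$ denote the mean-field equilibrium and set $\tilde\mu^N := \frac1N\sum_{j=1}^N \tilde q^j_T\delta_{X^{*,j}_T}$ and $\mu^{*,N} := \frac1N\sum_{j=1}^N q^{*,j}_T\delta_{X^{*,j}_T}$. The plan is to linearize the potential term $NG(\cdot)$ around the mean-field equilibrium, so that the surrogate reward becomes a sum of $N$ decoupled single-coordinate rewards. Each of these is then handled by the optimality of $q^{*}$ for the frozen problem \eqref{pb:optim-mfg} with $\mu=\mu^*$.

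\emph{Upper bound for the deviation.} By flat concavity of $G$ on the cone of non-negative measures,
\begin{equation*}
N G(\tilde\mu^N) \leq N G(\mu^*) + N\int_{{\mathbb R}^n} g_1(x,\mu^*)\,\dd(\tilde\mu^N-\mu^*)(x) = N G(\mu^*) - N\int g_1(\cdot,\mu^*)\,\dd\mu^* + \sum_{j=1}^N \tilde q^j_T\, g_1(X^{*,j}_T,\mu^*),
\end{equation*}
pointwise on $\Omega^{\times N}$. Taking expectations and adding the $g_0$ and running terms, I obtain
\begin{equation*}
{\mathcal J}^{(N)}_{\rm surrog}(\tilde Q^N,\psi^{*,\cdot}) \leq N\Bigl(G(\mu^*) - \int g_1(\cdot,\mu^*)\,\dd\mu^*\Bigr) + \sum_{j=1}^N \Bigl\{{\mathbb E}^{\times N}\bigl[\tilde q^j_T g(X^{*,j}_T,\mu^*)\bigr] + {\mathbb E}^{\times N}\Bigl[\int_0^T \tilde q^j_s\bigl(\ell^j(s,\psi^{*,j}_s) - f^{\star,j}(s,\tilde Z^{\star,j}_s)\bigr)\dd s\Bigr]\Bigr\}.
\end{equation*}
Each brace is Nature's reward in the frozen problem with $\mu=\mu^*$, evaluated at $\tilde q^j$. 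Here $\tilde q^j$ is an ${\mathbb F}^N$-adapted density driven only by $W^j$, so it belongs to ${\mathcal Q}[j]$. As in the proof of Lemma \ref{lem:27}, the It\^o expansion from \cite[Lemma 30]{DelarueLavigne} carries over to the product space and bounds each brace by the same quantity with $q^{*,j}$ in place of $\tilde q^j$. No $\varepsilon_N$ appears at this stage. The only point to verify is that $\tilde Q^N$ factorizes as $\prod_j \tilde q^j$ with each $\tilde q^j$ driven by $W^j$, which is exactly how ${\mathcal Q}^{(N)}$ is defined, so each brace makes sense.

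\emph{Comparison with the equilibrium value.} After this step, the upper bound has become
\begin{equation*}
N\Bigl(G(\mu^*) - \int g_1(\cdot,\mu^*)\,\dd\mu^*\Bigr) + \sum_j {\mathbb E}^{\times N}\bigl[q^{*,j}_T g(X^{*,j}_T,\mu^*)\bigr] + (\text{running terms}) = N G(\mu^*) + \sum_j {\mathbb E}^{\times N}\bigl[q^{*,j}_T g_0(X^{*,j}_T)\bigr] + (\text{running terms}),
\end{equation*}
because ${\mathbb E}[q^{*,j}_T g_1(X^{*,j}_T,\mu^*)] = \int g_1(\cdot,\mu^*)\,\dd\mu^*$. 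It therefore remains to show that ${\mathbb E}^{\times N}[G(\mu^{*,N})] \geq G(\mu^*) - \varepsilon_N$. Note that this sequence depends only on the equilibrium, which is why $\varepsilon_N$ can be taken uniform in $\tilde Q^N$, and it is here that the constant $c$ is harmless. I will establish this from the law of large numbers in Lemma \ref{lem:lln:1}, which gives $\mu^{*,N}\to\mu^*$ narrowly almost surely, combined with continuity of $G$ along this convergence.

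\emph{Main obstacle.} The hard step is the continuity of $G$ used in the last paragraph. I would write $G(\mu^{*,N}) - G(\mu^*) = \int_0^1\int g_1(x,\mu^*+\lambda(\mu^{*,N}-\mu^*))\,\dd(\mu^{*,N}-\mu^*)(x)\,\dd\lambda$. Since $g_1$ is bounded and Lipschitz in $x$ by \ref{assumption:g:N}, this is bounded by $C\,\|\mu^{*,N}-\mu^*\|_{\rm FM}$. Two points need care. First, $\mu^{*,N}$ need not have mass at most $\exp(\alpha T)$, so I would truncate the weights at level $a$ as in the proof of Lemma \ref{lem:LLN:polynomials}, using \eqref{eq:H:S}. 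Second, the convergence ${\mathbb E}\|\mu^{*,N}-\mu^*\|_{\rm FM}\to 0$ has to be obtained from the $L^4$ bound \eqref{eq:lln:4b} after localizing $X^*$ to a ball. With these two ingredients, dominated convergence yields the required $\varepsilon_N\to 0$.
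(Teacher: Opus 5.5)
Your argument is correct, but it linearizes $G$ at a different point than the paper does.

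\textbf{The paper's route.} The paper applies flat concavity at the random empirical measure $\mu^{*,N}=\frac1N\sum_j q^{*,j}_T\delta_{X^{*,j}_T}$. The term $N\,\mathbb{E}^{\times N}[G(\mu^{*,N})]$ it produces is then exactly the one in $\mathcal{J}^{(N)}_{\textrm{\rm surrog}}(Q^{*,N},\psi^{*,1},\ldots,\psi^{*,N})$. The error sits in the linear term: $\frac{\delta G}{\delta\mu}(\mu^{*,N},X^{*,i}_T)$ must be replaced by $g_1(X^{*,i}_T,\mu^*)$ inside $\mathbb{E}^{\times N}[(\tilde q^i_T-q^{*,i}_T)\,\cdot\,]$, as in Lemma \ref{lem:27}, before \cite[Lemma 30]{DelarueLavigne} is invoked. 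Since this error is tested against the deviating density $\tilde q^i_T$, it needs uniform integrability of the $\tilde q^i_T$'s. That is exactly where $\sup_i\mathcal{S}(\tilde q^i)\le c$ and \eqref{eq:H:S} enter.

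\textbf{Your route.} You linearize at the deterministic limit $\mu^*$. The linear term is then exactly Nature's frozen reward, so the verification step is error-free. All the error is pushed into $N\bigl(G(\mu^*)-\mathbb{E}^{\times N}[G(\mu^{*,N})]\bigr)$, which depends only on the equilibrium.

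\textbf{What each buys.} Your route gives an $\varepsilon_N$ independent of $c$ and of $\tilde Q^N$; the entropy bound is only needed to make each term finite and to run the verification argument. The price is a continuity estimate on $G$ itself rather than on $\delta G/\delta\mu$. This step cannot be skipped: since $\mathbb{E}^{\times N}[\mu^{*,N}]=\mu^*$ and $G$ is flat concave, Jensen gives $\mathbb{E}^{\times N}[G(\mu^{*,N})]\le G(\mu^*)$, so the gap is nonnegative and only the law of large numbers closes it. Your bound, obtained by integrating $g_1$ along the segment between $\mu^*$ and $\mu^{*,N}$, uses the bounds of \ref{assumption:g:N} at measures of arbitrary mass. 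The paper relies on the same bounds when it evaluates $\delta G/\delta\mu$ at $\mu^{*,N}$, so this is no extra assumption.

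\textbf{A simplification.} The truncation and $L^4$ machinery is heavier than you need. Lemma \ref{lem:lln:1} (after realizing all the $N$-tuples on one space, which is harmless since only laws matter) gives narrow, hence Fortet--Mourier, convergence $\mu^{*,N}\to\mu^*$ almost surely. Moreover $\|\mu^{*,N}-\mu^*\|_{\textrm{\rm FM}}\le 1+\frac1N\sum_j q^{*,j}_T$, and the right-hand side is uniformly integrable by Scheff\'e's lemma. Vitali's theorem then concludes.
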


\begin{proof} By concavity of $G$, 
\begin{equation*}
\begin{split}
{\mathbb E}^{\times N} \left[ N G\left( \frac1N \sum_{j=1}^N \tilde q^j_T \delta_{X^{*,j}_T} 
\right) \right] 
&\leq {\mathbb E}^{\times N} \left[ N G\left( \frac1N \sum_{j=1}^N q^{*,j}_T \delta_{X^{*,j}_T} 
\right) \right] 
\\
&\hspace{5pt} + \sum_{i=1}^N {\mathbb E}^{\times N} \left[
\left( \tilde q_T^i - q_T^{*,i} \right) 
 \frac{\delta G}{\delta \mu} \left( \frac1N \sum_{j=1}^N q^{*,j}_T \delta_{X^{*,j}_T}, X_T^{*,i}  
\right) \right]. 
\end{split}
\end{equation*}
By proceeding as in the proof of Lemma 
\ref{lem:27}, we deduce that there exists a sequence 
$(\varepsilon_N)_{N \geq 1}$ converging to $0$ and only depending on 
$\tilde Q^N$ via $c$ such that 
\begin{equation*}
\begin{split}
{\mathbb E}^{\times N} \left[ N G\left( \frac1N \sum_{j=1}^N \tilde q^j_T \delta_{X^{*,j}_T} 
\right) \right] 
&\leq {\mathbb E}^{\times N} \left[ N G\left( \frac1N \sum_{j=1}^N q^{*,j}_T \delta_{X^{*,j}_T} 
\right) \right]  
\\
&\hspace{5pt} + \sum_{i=1}^N {\mathbb E}^{\times N} \left[
\left( \tilde q_T^i - q_T^{*,i} \right) 
 \frac{\delta G}{\delta \mu} \left( (q_T^* {\mathbb P})_{X_T^*}, X_T^{*,i}  
\right) \right] + N \varepsilon_N. 
\end{split}
\end{equation*} 
Once again 
by  \cite[Lemma 30]{DelarueLavigne}, 
\begin{equation*}
\begin{split}
&\sum_{i=1}^N {\mathbb E}^{\times N} \left[
\left( \tilde q_T^i - q_T^{*,i} \right) 
 \frac{\delta G}{\delta \mu} \left( (q_T^* {\mathbb P})_{X_T^*}, X_T^{*,i}  
\right) \right]
\\
&\leq - \sum_{i=1}^N {\mathbb E}^{\times N} \left[
\int_0^T 
\left( \tilde q_s^i - q_s^{*,i} \right) 
\ell^i(s,\psi_s^{*,i}) \dd s 
 \right]
 \\
&\hspace{15pt} - \sum_{i=1}^N {\mathbb E}^{\times N} \left[
\int_0^T 
\left( \tilde q_s^i f^{i,\star}(s,\tilde Z_s^{\star,i}) - q_s^{*,i} f^{i,\star}(s,Z_s^{\star,*,i}  ) 
\right)  \dd s 
 \right].
\end{split}
\end{equation*} 
Combining the last two displays, we easily complete the proof. 
\end{proof} 

\subsection*{Acknowledgment}
F. Delarue and P. Lavigne  acknowledge the financial support of the European Research Council (ERC) under the European Union's Horizon Europe research and innovation program (ELISA project, Grant agreement No. 101054746). Views and opinions expressed are however
those of the author(s) only and do not necessarily reflect those of the European Union or the
European Research Council Executive Agency. Neither the European Union nor the granting
authority can be held responsible for them.
\bibliographystyle{plain}
\bibliography{biblio}

\end{document}